\let\pa\partial  
\let\na\nabla  
\let\eps\varepsilon  
\newcommand{\N}{{\mathbb N}}  
\newcommand{\R}{{\mathbb R}} 
\newcommand{\diver}{\operatorname{div}}
\newcommand{\m}{\mbox{\rm m}}
\newtheorem{theorem}{Theorem}   
\newtheorem{lemma}[theorem]{Lemma}
\def\txtd{{\textnormal{d}}}
\def\txte{{\textnormal{e}}}
\def\txtD{{\textnormal{D}}}
\newcommand{\be}{\begin{equation}}
\newcommand{\ee}{\end{equation}}
\newcommand{\bea}{\begin{eqnarray}}
\newcommand{\eea}{\end{eqnarray}}
\newcommand{\beann}{\begin{eqnarray*}}
\newcommand{\eeann}{\end{eqnarray*}}
\newcommand{\benn}{\begin{equation*}}
\newcommand{\eenn}{\end{equation*}}
\def\ra{\rightarrow}
\def\I{\infty}
\newcommand{\cA}{{\mathcal A}}  
\newcommand{\cB}{{\mathcal B}}  
\newcommand{\cD}{{\mathcal D}}  
\newcommand{\cF}{{\mathcal F}}  
\newcommand{\cN}{{\mathcal N}}  
\newcommand{\cR}{{\mathcal R}}  
\newcommand{\cX}{{\mathcal X}}  
\newcommand{\cY}{{\mathcal Y}}  
\def\txtd{{\textnormal{d}}}
\def\txte{{\textnormal{e}}}
\def\txtD{{\textnormal{D}}}
\begin{document}  

\title[Cross-diffusion herding]{A meeting point of entropy and bifurcations 
in cross-diffusion herding}

\author{Ansgar J\"ungel}
\address{Institute for Analysis and Scientific Computing, Vienna University of  
	Technology, Wiedner Hauptstra\ss e 8--10, 1040 Wien, Austria}
\email{juengel@tuwien.ac.at} 

\author{Christian Kuehn}
\address{Institute for Analysis and Scientific Computing, Vienna University of  
	Technology, Wiedner Hauptstra\ss e 8--10, 1040 Wien, Austria}
\email{ck274@cornell.edu}

\author{Lara Trussardi}
\address{Institute for Analysis and Scientific Computing, Vienna University of  
	Technology, Wiedner Hauptstra\ss e 8--10, 1040 Wien, Austria}
\email{lara.trussardi@tuwien.ac.at}

\thanks{AJ and LT acknowledge partial support from   
the European Union in the FP7-PEOPLE-2012-ITN Program under 
Grant Agreement Number 304617,
the Austrian Science Fund (FWF), grants P22108, P24304, W1245, and
the Austrian-French Program of the Austrian Exchange Service (\"OAD).
CK acknowledges partial support by an APART fellowship of the Austrian
Academy of Sciences (\"OAW) and by a Marie-Curie International Reintegration
Grant by the EU/REA (IRG 271086). Furthermore, we would like to thank two
anonymous referees for very helpful comments and suggestions that led to 
several improvements.} 

\begin{abstract}
A cross-diffusion system modeling the information herding of individuals is analyzed
in a bounded domain with no-flux boundary conditions. The variables are the species' 
density and an influence function which modifies the information state of the individuals. 
The cross-diffusion term may stabilize or destabilize the system. Furthermore, 
it allows for a formal gradient-flow or entropy structure. Exploiting this structure, 
the global-in-time existence of weak solutions and the exponential decay to the 
constant steady state is proved in certain parameter regimes. This approach
does not extend to all parameters. We investigate local bifurcations from
homogeneous steady states analytically to determine whether this defines the validity
boundary. This analysis shows that generically there is a gap in the parameter 
regime between the entropy approach validity and the first local bifurcation. Next, 
we use numerical continuation methods to track the bifurcating non-homogeneous 
steady states globally and to determine non-trivial stationary solutions related to
herding behaviour. In summary, we find that the main boundaries in the parameter regime 
are given by the first local bifurcation point, the degeneracy of the diffusion matrix 
and a certain entropy decay validity condition. We study several parameter limits 
analytically as well as numerically, with a focus on the role of changing a linear 
damping parameter as well as a parameter controlling the cross-diffusion. We suggest 
that our paradigm of comparing bifurcation-generated obstructions to the parameter 
validity of global-functional methods could also be of relevance for many other models 
beyond the one studied here.
\end{abstract}

$\quad$

\keywords{Information herding, entropy method, global existence of solutions,
large-time dynamics of solutions, relative entropy, Crandall-Rabinowitz, numerical 
continuation, bifurcation.}
 
\subjclass[2000]{35K57, 35K20, 35B40, 35Q91}  

\maketitle

\section{Introduction}
\label{sec.intro}

In this paper we study the following cross-diffusion system:
\begin{align}
  \pa_t u_1 &= \diver(\na u_1 - g(u_1)\na u_2), \label{1.eq1} \\
  \pa_t u_2 &= \diver(\delta\na u_1 + \kappa \na u_2) + f(u_1)-\alpha u_2,
	\label{1.eq2}
\end{align}
where $u_1=u_1(t,x)$, $u_2=u_2(t,x)$ for $(t,x)\in[0,T)\times \Omega$, $T>0$ is the final
time, $\Omega\subset\R^d$ ($d\ge 1$) is a bounded domain with sufficiently smooth
boundary, $\nabla$ denotes the gradient, $\diver=\nabla\cdot$ is the divergence and 
$\partial_t=\frac{\partial }{\partial t}$ denotes the partial derivative with respect to time. 
The equations are supplemented by no-flux boundary conditions and suitable initial conditions
\begin{equation}
\label{1.bic}
\begin{array}{rcl}
  (\na u_1 - g(u_1)\na u_2)\cdot\nu &=& 0\\
  (\delta\na u_1 + \kappa \na u_2)\cdot\nu &=& 0\\
\end{array}
  \quad\mbox{on }\partial \Omega,\ t>0,
	\quad u_1(0,x)=u_1^0,\ u_2(0,x)=u_2^0\quad\mbox{in }\Omega,
\end{equation}
where $\nu$ denotes the outer unit normal vector to $\partial \Omega$.
The function $u_1(x,t)\in[0,1]$ represents the density of individuals with
information variable $x\in\Omega$ at time $t\ge 0$, and $u_2(x,t)$ is an influence 
function which modifies the information state of the individuals and possibly may 
lead to a herding (or aggregation) behaviour of individuals. The influence function 
acts through the term $g(u_1)\na u_2$ in \eqref{1.eq1}. The non-negative bounded function 
$g(u_1)$ is assumed to vanish only at $u_1=0$ and $u_1=1$, which provides the bound 
$0\le u_1\le 1$ if $0\leq u_1(0,x)\leq 1$. In particular, we assume that the
influence becomes weak if the number of individuals at fixed $x\in\Omega$
is very low or close to the maximal value $u_1=1$, i.e.\ $g(0)=0$ and $g(1)=0$,
which may enhance herding behaviour. The influence function is assumed to be
modified by diffusive effects also due to the random behaviour of the density
of the individuals with parameter $\delta>0$, by the non-negative source term 
$f(u_1)$, relaxation with time with rate $\alpha>0$, and diffusion with coefficient 
$\kappa>0$.

If $\delta=0$, equations \eqref{1.eq1}-\eqref{1.eq2} can be interpreted
as a nonlinear variant of the chemotaxis Patlak-Keller-Segel model 
\cite{KellerSegel}, where the function $u_2$ corresponds to the 
concentration of the chemoattractant. The model with nonlinear mobility
$g(u_1)$ was first analyzed by Hillen and Painter \cite{HillenPainter},
even for more general mobilities of the type $u_1\beta(u_1)\chi(u_2)$.
Generally, the mobility $g(u_1)=u_1(1-u_1)$ models finite-size exclusion and 
prevents blow-up phenomena \cite{Wrzosek}, which are known in the original 
Keller-Segel model. The convergence to equilibrium was shown in \cite{JiangZhang}.
Such models were also employed to describe evolution of large human crowds driven 
by the dynamic field $u_2$ \cite{BurgerMarkowichPietschmann}.

System \eqref{1.eq1}-\eqref{1.eq2} is one possible model 
to describe the dynamics of information herding in a macroscopic setting. 
There exist other approaches to model herding behaviour, for instance
using kinetic equations \cite{DelitalaLorenzo} or agent-based models \cite{LambdaSeaman}, 
but the focus in this paper is to understand the influence of the parameters $\delta$ 
and $\alpha$ on the solution from a mathematical viewpoint, i.e., to 
investigate the interplay between cross-diffusion and damping.

Equations \eqref{1.eq1}-\eqref{1.eq2} with $\delta>0$ can be derived from an
interacting ``particle'' system modeled by stochastic differential equations, at
least in the case $g(u_1)=\mbox{const.}$ (see \cite{GS14}). One expects that this
derivation can be extended to the case of non-constant $g(u_1)$ but we do not discuss
this derivation here. The above system with $g(u_1)=u_1$ was analyzed in \cite{HittmeirJuengel}
in the Keller-Segel context. The additional cross diffusion with $\delta>0$ in \eqref{1.eq2} 
was motivated by the fact that it prevents the blow up of the solutions in two space dimensions,
even for large initial densities and for arbitrarily small values of $\delta>0$.
The motivation to introduce this term in our model is different since the nonlinear 
mobility $g(u_1)$ allows us to conclude that $u_1\in[0,1]$, thus preventing blow up without 
taking into account the cross-diffusion term $\delta\Delta u_1$. Our aim is to investigate
the solutions to \eqref{1.eq1}-\eqref{1.eq2} for {\em all}
values for $\delta$, thus allowing for {\em destabilizing} cross-diffusion parameters
$\delta<0$. 

One starting point to investigate the dynamics is to consider the functional structure 
of the equation. In this context entropy methods are a possible tool \cite{Juengel1}. 
The entropy structure can frequently be used to establish the existence of (weak) solutions. 
Furthermore, it is helpful for a quantitative analysis of the large-time dynamics of solutions 
for certain reaction-diffusion  systems; see, e.g., \cite{DesvillettesFellner}. The method 
quantifies the decay of a certain functional with respect to a steady state. An advantage is 
that the entropy approach can work globally, even for initial conditions far away from steady states. 
Moreover, the entropy structure may be formulated in the variational framework of gradient flows 
which allows one to analyze the geodesic convexity of their solutions \cite{LiMi13,ZiMa15}. However, 
this global view indicates already that we may not expect that the approach is valid for all 
parameters in general nonlinear systems. Indeed, in many situations, global methods only work 
for a certain range of parameters occurring in the system. The question is what happens for 
parameter values outside the admissible parameter range and near the validity boundary.

One natural conjecture is that upon variation of a single parameter, there exists a \emph{single} 
critical parameter value associated to a first local bifurcation point $\delta_{\textnormal{b}}$ 
beyond which a global functional approach does not extend. In particular, the homogeneous 
steady state upon which the entropy is built, could lose stability and new solution branches may 
appear in parameter space. Another possibility is that global bifurcation branches in parameter space 
are an obstruction. In our context, the generic situation is different from the two natural 
conjectures.

In the context of \eqref{1.eq1}-\eqref{1.eq2}, the \emph{main distinguished parameter} we are 
interested in is $\delta$. Here we shall state our results on an informal level. 
Carrying out the existence of weak solutions and the global decay
to homogeneous steady states 
\benn
u^*=(u_1^*,u_2^*)
\eenn
via an entropy approach, we find the following results:

\begin{enumerate}
 \item[(M1)] Using the entropy approach, one may prove the existence of weak solutions 
 to~\eqref{1.eq1}-\eqref{1.eq2} in certain parameter regimes.
 \item[(M2)] The global entropy decay to equilibrium does not extend to arbitrary 
 negative $\delta$. Suppose we fix all other parameters, then there exists a critical 
 $\delta_{\textnormal{e}}$ (to be defined below) such that global decay occurs only for 
 $\delta>\delta_{\textnormal{e}}$ ($\delta\neq0$).
 \item[(M3)] If we consider the limit $\alpha\ra +\I$ then we can extend the global 
 decay up to
 \benn
 \delta^*:=-\kappa/\gamma<0, \qquad \text{where $\gamma:=\max_{v\in[0,1]} g(v)$,} 
 \eenn
 i.e., global exponential decay to a steady state occurs for all $\delta>\delta^* (\delta\neq 0)$ if 
 $\alpha$ is large enough. 
 \item[(M4)] In the limit $\alpha\ra 0$, we find that 
 $\delta_{\textnormal{e}}\ra +\I$. In particular, 
 the entropy method breaks down in this limiting regime in the formulation presented here.
\end{enumerate}

We stress that the results for the global decay (M2)-(M4) may not be sharp, 
in the sense that one could potentially improve the validity boundary $\delta_{\textnormal{e}}$. 
Interestingly, we shall prove below that (M3) is indeed sharp for certain steady states, 
i.e., no improvement is possible in this limit. The proofs of (M1)-(M4) 
provide a number of technical challenges, which are discussed in more detail in 
Section \ref{sec.results1} and Section \ref{sec.ent.proofs}. We also note that the entropy method 
definitely does not extend to any negative $\delta$. It is clear that a 
global decay to a homogeneous steady state for all initial conditions is impossible if bifurcating 
non-homogeneous steady state solutions exist as well. We use analytical local bifurcation theory 
for the stationary problem, based upon a modification of Crandall-Rabinowitz theory \cite{Kielhoefer}, 
to prove the following:

\begin{itemize}
 \item[(M5)] The bifurcation approach for homogeneous steady states can be carried out as long as 
 \benn
 \delta\neq \delta_{\textnormal{d}}:=-\kappa/g(u_1^*). 
 \eenn
 On a generic open and connected domain, local bifurcations of simple eigenvalues occur for
 \benn
 \delta_{\textnormal{b}}^n= \delta_{\textnormal{d}}+\frac{1}{\mu_n}\Bigl[f'(u_1^*)-
 \frac{\alpha}{g(u_1^*)}\Bigr],
 \eenn
 where $\mu_n$ are the eigenvalues of the negative Neumann Laplacian.
 \item[(M6)] If $\alpha>0$ is sufficiently \emph{large} and fixed, 
 $\delta_\textnormal{b}^n<\delta_\textnormal{d}<\delta^*$ and the bifurcation points accumulate
 at $\delta_\textnormal{d}$.
 \item[(M7)] If $\alpha>0$ is sufficiently \emph{small} and fixed, 
 $\delta_\textnormal{d}<\delta_\textnormal{b}^n$ and the bifurcation points again 
 accumulate at $\delta_\textnormal{d}$.
\end{itemize}

Although these results are completely consistent with the global decay of the entropy functional, 
they do not yield global information about the bifurcation curves. In general, it is 
not possible to analytically characterize all global bifurcation for arbitrary nonlinear systems. 
Therefore, we consider numerical 
continuation of the non-homogeneous steady-state solution branches (for spatial dimension $d=1$). 
The continuation is carried out using \texttt{AUTO} \cite{Doedel_AUTO2007}. Our numerical results 
show the following:

\begin{itemize}
 \item[(M8)] We regularize the numerical problem using a small parameter $\rho$ to avoid 
 higher-dimensional bifurcation surfaces due to mass conservation.
 \item[(M9)] The non-homogeneous steady-state bifurcation branches starting at the local bifurcation points 
 extend in parameter space and contain multi-bump solutions, which deform into more localized (herding) states
 upon changing parameters.
 \item[(M10)] A second continuation run considering $\rho\ra 0$ yields non-trivial solutions for the 
 original problem. In particular, solutions may have multiple transition layers (respectively concentration 
 regions) and the ones with very few layers occupy the largest ranges in $\delta$-parameter space.
\end{itemize}

Combining all the results we conclude that we have the situations in Figure~\ref{fig:00}(a)-(b) for
generic fixed parameter values and a generic fixed domain. These two main cases of interest are:

\begin{itemize}
 \item[(C1)] $\alpha>0$ \emph{sufficiently large}: In this limit, the entropy validity boundary, the 
 analytical bifurcation approach, and the numerical methods are organized around the singular
 limit at $\delta=\delta^*$. Indeed, note that 
 \benn
 \delta^*=\delta_\textnormal{d},\quad \text{if $u=u_1^*$ maximizes $g(u)$ on $[0,1]$},
 \eenn
 and we show below that $\delta_\textnormal{e}\ra \delta^*$ as $\alpha\ra +\I$. The generic 
 picture for a homogeneous steady state so that $u^*_1$ does not maximize $g$ and $\alpha$ is 
 moderate and fixed is given in Figure~\ref{fig:00}(a).
 \item[(C2)] $\alpha>0$ \emph{sufficiently small}: In this case, the generic picture is shown in 
 Figure~\ref{fig:00}(b). The entropy decay only occurs for very large values 
 $\delta>\delta_\textnormal{e}$. Interestingly, the approaches do not seem to collapse onto one 
 singular limit in this case. 
\end{itemize}

We remark that the condition $\kappa\neq-\delta g(u_1)$ does not only occur in the numerical 
continuation analysis. It occurs in the context of the entropy method as well as the analytical 
bifurcation calculation. It is precisely the condition for the vanishing of the determinant of 
the diffusion matrix that prevents pushing existence and decay techniques based upon global 
functionals further. The  condition also prevents analytical bifurcation theory to work as 
the linearized problem does not yield a Fredholm operator. In some sense,
this explains the singular limit as $\alpha\ra +\I$ from (C1). Although (C1) is quite satisfactory 
from a mathematical perspective, one drawback is that the forward problem may not be well-posed 
in a classical sense if $\delta<\delta_\textnormal{d}$; of course, the stationary problem is 
still well-defined.

\begin{figure}[htbp]
\psfrag{u}{\small{$\|u\|$}}
\psfrag{1f}{\tiny{1-layer}}
\psfrag{2f}{\tiny{2-layer}}
\psfrag{d}{\small{$\delta$}}
\psfrag{a}{\small{(a)}}
\psfrag{b}{\small{(b)}}
\psfrag{ds}{\small{$\delta^*$}}
\psfrag{dd}{\small{$\delta_{\textnormal{d}}$}}
\psfrag{de}{\small{$\delta_{\textnormal{e}}$}}
	\centering
		\includegraphics[width=1\textwidth]{./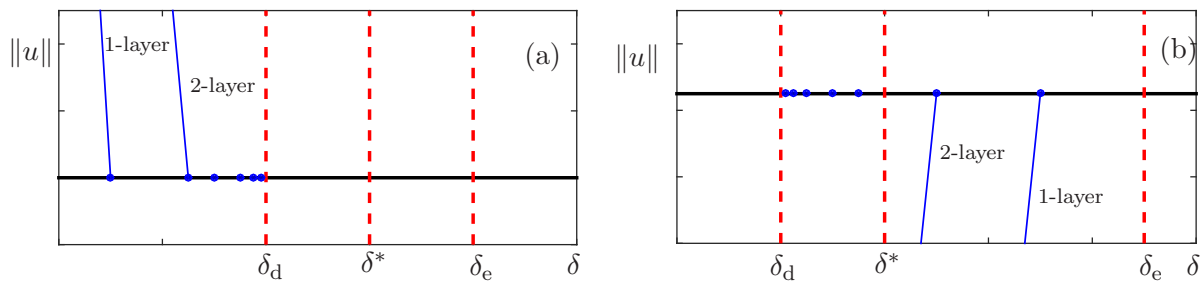}	
		\caption{\label{fig:00}Sketch of the different bifurcation scenarios; for more
		detailed numerical calculations see Section~\ref{sec.numerics.res}. Only 
		the main parameter $\delta$ is varied, a homogeneous branch is shown in black and 
		bifurcation points and branches in blue (dots and curves). Only the first two nontrivial 
		branches are sketched which contain solutions with one transition layer. (a)~Case (C1) 
		with $\alpha>0$ sufficiently large; for a suitable choice of $u^*$ and $\alpha\ra +\I$ 
		all three vertical dashed red lines collapse onto one line. (b)~Case (C2) with $\alpha>0$ 
		sufficiently small.}
\end{figure} 

For (C2), we cannot prove sharp global decay via an entropy functional. However, the first nontrivial 
branch of locally stable stationary herding solutions can be reached in forward time via a classical
well-posed problem, and (C1)-(C2) always make sense for adiabatic parameter variation. 
Although we postpone the detailed mathematical study of the the limit $\alpha\ra0$ to future work, the 
observations raise several interesting problems, which we discuss in the outlook at the end of this paper.

In summary, the main contribution of this work is to study the interplay between three different 
techniques available for reaction-diffusion systems with cross-diffusion: entropy methods, analytical 
local bifurcation and numerical global bifurcation theory. Furthermore, for each technique, we have to
use, improve, and apply the previously available methods to the herding model problem
\eqref{1.eq1}-\eqref{1.bic}. Our results lead to clear insight on the subdivision of parameter
space into regimes, where each method is particularly well-suited to describe the system dynamics.
We identify two interesting singular limits and provide a detailed analysis for the limit of large 
damping. Furthermore, we compute via numerical continuation several solutions that are of interest for 
applications to herding behaviour using a two-parameter homotopy approach to desingularize the mass
conservation. From an application perspective, we identify herding states with clustering of 
individuals in one, or just a few, distinct regions, as the ones occupying the largest parameter
ranges. Hence, we expect applications to be governed by homogeneous stationary and relatively simple 
heterogeneous herding states.

There seem to be very few works \cite{Gabriel,ArnoldBonillaMarkowich} studying the parameter space 
interplay between global entropy-structure methods in comparison to local analytical and global numerical 
bifurcation approaches. Our work seems to be, to the best of our knowledge, the first analysis 
combining and comparing all three methods, and also the first to consider the global-functional and 
bifurcations interaction problem for cross-diffusion systems. In fact, our analysis suggests a general 
paradigm to improve our understanding of global methods for nonlinear spatio-temporal systems, i.e., 
one major goal is to determine the parameter space \emph{validity boundaries} between 
\emph{different methods}.\medskip

The paper is organized as follows. In Section \ref{sec.results}, we state our main results and 
provide an overview of the strategy for the proofs respectively the numerical methods employed. 
In particular, the entropy method results are considered in Section \ref{sec.results1}, the 
analytical local bifurcation in Section \ref{sec.results2}, and the numerical global bifurcation
results in Section \ref{sec.results3}. The following sections contain the full details for 
the main results. The proofs using the entropy method are contained in Section \ref{sec.ent.proofs},
where the weak solution construction is carried out in Section \ref{sec.ex} and the global
decay is proved in Section \ref{sec.decay}. Section \ref{sec.bifurcation.res} proves the 
existence of local bifurcation points to non-trivial solutions upon decreasing $\delta$. The details for 
the global numerical continuation results are reported in Section \ref{sec.numerics.res}. We 
conclude in Section \ref{sec.outlook} with an outlook, where we discuss several open questions.\medskip

{\small \textbf{Notation:} When operating with vectors we view them as column vectors and use 
$(\cdot)^\top$ to denote the transpose. We use the standard notation for $L^p$-spaces, $W^{k,p}$
for the Sobolev space with (weak) derivatives up to and including order $k$ in $L^p$ as well as
the shorthand notation $W^{k,2}=H^k$; see \cite{Evans} for details. Furthermore, $'$ denotes
the associated dual space, when applied to a function space.}

\section{Main Results}
\label{sec.results}

We describe the main results of this paper, obtained by either the entropy method
or local analytical and global numerical bifurcation analysis.

\subsection{Entropy Method}
\label{sec.results1}

First, we show the global existence
of weak solutions and their large-time decay to equilibrium. 
We observe that the diffusion matrix of system  \eqref{1.eq1}-\eqref{1.eq2}
is neither symmetric nor positive definite which complicates the analysis.
Local existence of (smooth) solutions follows from Amann's results
\cite{Ama89} if the system is parabolic in the sense of Petrovskii, i.e., if
the real parts of the eigenvalues of the diffusion matrix are positive. 
A sufficient condition for this
statement is $\delta\ge \delta_{\rm d} = -\kappa/\gamma$. The challenge here
is to prove the existence of {\em global} (weak) solutions.

The main challenge
of \eqref{1.eq1}-\eqref{1.eq2} is that the diffusion matrix of the system is neither
symmetric nor positive definite.
The key idea of our analysis, similar as in \cite{HittmeirJuengel}, 
is to define a suitable entropy functional. 
The entropy is a special Lyapunov functional 
which provides suitable gradient estimates.
Compared to Lyapunov functional techniques like in \cite{Horstmann,Wolansky}
(used for the case $\delta=0$),
the entropy method provides explicit decay rates and, in our case, 
$L^\infty$ bounds without the use of a maximum principle. (Note that in the
system at hand, the $L^\infty$ bounds can be obtained by the standard maximum
principle but there are systems where this can be achieved by using the entropy
method only; see \cite{Juengel1}.)
For this, we introduce the entropy density
\begin{equation*}
  h(u) = h_0(u_1) + \frac{u_2^2}{2\delta_0}, \quad u=(u_1,u_2)^\top\in [0,1]\times\R,
\end{equation*}
where $h_0$ is defined as the second anti-derivative of $1/g$,
\begin{equation}\label{1.h0}
  h_0(s) := \int_m^s\int_m^\sigma\frac{1}{g(t)}~\txtd t~\txtd\sigma, \quad s\in(0,1),
\end{equation}
where $0<m<1$ is a fixed number, and 
$$
  \delta_0:=\delta \quad\mbox{if }\delta>0, \quad
	\delta_0:=\kappa/\gamma \quad\mbox{if }-\kappa/\gamma<\delta<0.
$$
It turns out that the so-called entropy variables
$w=(w_1,w_2)^\top$ with $w_1=h_0'(u_1)$ and $w_2=u_2/\delta_0$ make the diffusion matrix
positive semi-definite for all $\delta>\delta^*:=-\kappa/\gamma$, $\delta\neq 0$. We 
remark that for $\delta=0$ the method does not work and we do not cover this case.
In the $w$-variables, we can formulate \eqref{1.eq1}-\eqref{1.eq2} equivalently as
$$
  \pa_t u = \diver(B(w)\na w) + F(u),
$$
where $u=u(w)$, $F(u)=(0,f(u_1)-\alpha u_2)^\top$ and
\begin{equation}\label{1.B}
  B(w) = \begin{pmatrix}
	g(u_1) & -\delta_0 g(u_1) \\
	\delta g(u_1) & \delta_0\kappa
	\end{pmatrix}.
\end{equation}
The invertibility of the mapping $w\mapsto u(w)$ is guaranteed by Hypothesis (H3)
below. We show in Lemma \ref{lem.B} below that 
$B(w)$ is positive semi-definite if $\delta>\delta^*$, $\delta\neq 0$. The global existence
is based on the fact that the entropy
\begin{equation}\label{1.H}
  H(u(t)) = \int_\Omega \left(h_0(u_1(t)) + \frac{u_2(t)^2}{2\delta_0}\right) \txtd x
\end{equation}
is bounded on $[0,T]$ for any $T>0$; note that we write $u=u(t)$ here to emphasize the time 
dependence of $H$. A formal computation, which is made rigorous in Section 
\ref{sec.ex}, shows that 
\begin{align}
  \frac{\txtd H}{\txtd t}
	&= -\int_\Omega\left(\frac{|\na u_1|^2}{g(u_1)} 
	+ \left(\frac{\delta}{\delta_0}-1\right)\na u_1\cdot\na u_2
	+ \frac{\kappa}{\delta_0}|\na u_2|^2\right)\txtd x \label{1.dHdt} \\
	&\phantom{xx}{}+ \frac{1}{\delta_0}\int_\Omega(f(u_1)-\alpha u_2)u_2~ \txtd x. \nonumber
\end{align}
The terms in the first bracket define a positive definite quadratic form 
if and only if $\delta>\delta^*$. The second integral is bounded since $f(u_1)$ is bounded.
This shows that for some $\eps_1(\delta)>0$,
\begin{equation}\label{1.eps1}
  \frac{\txtd H}{\txtd t} \le -\eps_1(\delta)\int_\Omega\left(
	\frac{|\na u_1|^2}{g(u_1)} 
	+ \frac{|\na u_2|^2}{\delta_0^2}\right)\txtd x + c,
\end{equation}
where the constant $c>0$ depends on $\Omega$, $f$, and $\alpha$. These gradient bounds are 
essential for the existence analysis.

Before we state the existence theorem, we make our assumptions precise:
\begin{enumerate}
\item[(H1)] $\Omega\subset\R^d$ with $\pa\Omega\in C^2$, $\alpha>0$, $\kappa>0$,
$h(u^0)\in L^1(\Omega)$ with $u_1^0\in(0,1)$ a.e. 
\item[(H2)] $f\in C^0([0,1])$ is nonnegative. 
\item[(H3)] $g\in C^2([0,1])$ is positive on $(0,1)$, $g(0)=g(1)=0$, 
$g(u)\le\gamma$ for $u\in[0,1]$, where $\gamma>0$,
and $\int_0^m \txtd s/g(s)=\int_m^1 \txtd s/g(s)=+\infty$ for some $0<m<1$.
\end{enumerate}

The condition $g(u)\le\gamma$ in $[0,1]$ in (H3) implies that 
$(u_1^0-m)^2/(2\gamma)\le h_0(u_1^0)$ and hence, $h(u^0)\in L^1(\Omega)$ in (H1)
yields $u_1^0\in L^2(\Omega)$ and $u_2^0\in L^2(\Omega)$. 
Hypothesis (H3) ensures that the function $h_0$ defined in \eqref{1.h0} 
is well defined and of class $C^4$ (needed in Lemma \ref{lem.csi}).
Its derivative $h_0'$ is strictly increasing on $(0,1)$ with range $\R$, 
thus being invertible with inverse $(h_0')^{-1}:\R\to(0,1)$.
For instance, the function $g(s)=s(1-s)$, $s\in[0,1]$, satisfies (H3) and
$h_0(s) = s\log s + (1-s)\log(1-s)$, where $\log$ denotes the natural logarithm. 
A more 
general class of functions fulfilling (H3) is $g(s)=s^a(1-s)^b$ with $a$, $b\ge 1$.

\begin{theorem}[Global existence]\label{thm.ex}
Let assumptions (H1)-(H3) hold and let $\delta>-\kappa/\gamma$. 
Then there exists a weak 
solution to \eqref{1.eq1}-\eqref{1.bic} satisfying $0\le u_1\le 1$ in 
$\Omega$, $t>0$ and
$$
  u_1,\,u_2\in L^2_{\rm loc}(0,\infty;H^1(\Omega)), \quad
	\pa_t u_1,\,\pa_t u_2\in L^2_{\rm loc}(0,\infty;H^1(\Omega)').
$$
The initial datum is satisfied in the sense of $H^1(\Omega;\R^2)'$. 
\end{theorem}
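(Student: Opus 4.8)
The plan is to construct the weak solution by the boundedness-by-entropy method, exploiting the reformulation $\pa_t u = \diver(B(w)\na w) + F(u)$ in the entropy variables $w=(h_0'(u_1),u_2/\delta_0)^\top$, for which Lemma~\ref{lem.B} supplies the positive semi-definiteness of $B(w)$ whenever $\delta>\delta^*=-\kappa/\gamma$, $\delta\neq0$. The decisive structural advantage is that $u_1=(h_0')^{-1}(w_1)$ takes values in $(0,1)$ for \emph{every} $w_1\in\R$, so the pointwise bound $0\le u_1\le1$ is encoded in the change of variables and never has to be recovered from a maximum principle. First I would discretize time by the implicit Euler scheme with step $\tau>0$, and at each step add a higher-order elliptic regularization $\eps\bigl(\sum_{|\beta|=m}\int_\Omega D^\beta w\cdot D^\beta\phi\,\txtd x+\int_\Omega w\cdot\phi\,\txtd x\bigr)$ with $m>d/2$, tested against $\phi\in H^m(\Omega;\R^2)$. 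The role of this term is twofold: it makes the regularized bilinear form coercive on $H^m$ in spite of the degeneracy of $B$, and it forces $w\in H^m(\Omega)\hookrightarrow L^\infty(\Omega)$, so that $u(w)$ is well defined at every step.

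At a fixed time level I would solve the regularized discrete elliptic problem by a fixed-point argument: linearize the $w$-dependence of $B$ and $u$, solve the resulting linear problem by the Lax--Milgram lemma using the $\eps$-coercivity, and close the loop with the Leray--Schauder theorem, the necessary compactness and a~priori bound coming again from the entropy. Choosing the test function $\phi=w^k$ reproduces at the discrete level the dissipation computation \eqref{1.dHdt}--\eqref{1.eps1}: noting that $w=h'(u)$, so that the time term yields the increment of $H$, and that the quadratic form $\int_\Omega\na w^\top B(w)\na w\,\txtd x$ equals the first bracket of \eqref{1.dHdt}, which is positive definite exactly for $\delta>\delta^*$, one obtains the discrete entropy inequality and hence the uniform-in-$(\tau,\eps)$ estimates
\begin{equation*}
  \sup_k H(u^k)+\sum_k\tau\int_\Omega\Bigl(\frac{|\na u_1^k|^2}{g(u_1^k)}+\frac{|\na u_2^k|^2}{\delta_0^2}\Bigr)\txtd x+\eps\sum_k\tau\|w^k\|_{H^m}^2\le C.
\end{equation*}
Since $g\le\gamma$ gives $|\na u_1|^2\le\gamma\,|\na u_1|^2/g(u_1)$, these bounds translate into uniform $L^2(0,T;H^1(\Omega))$ control of both $u_1$ and $u_2$; the $L^2$ bound on $u_2$ and the constraint $u_1\in(0,1)$ follow from the boundedness of $H(u^k)$ together with the lower bound $h_0(s)\ge(s-m)^2/(2\gamma)$ recorded after (H3).

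Finally I would pass to the limit $(\tau,\eps)\to(0,0)$ on the piecewise-constant-in-time interpolants $u^{(\tau)}$. From the discrete equation the shift quotients in time are bounded in $L^2(0,T;(H^m(\Omega))')$; combining this with the spatial $H^1$ bounds and a discrete Aubin--Lions lemma yields strong convergence of $u_1^{(\tau)}$ and $u_2^{(\tau)}$ in $L^2(0,T;L^2(\Omega))$, while $\na u_2^{(\tau)}$ converges weakly in $L^2$. The regularizing contributions vanish because $\eps\|w^{(\tau)}\|_{H^m}^2$ is bounded. One then identifies the limit in the weak formulation, recovers $\pa_t u_1,\pa_t u_2\in L^2_{\rm loc}(0,\infty;H^1(\Omega)')$ directly from the limiting equation and the gradient bounds, and checks that the no-flux and initial conditions are attained in the stated sense.

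The main obstacle is the passage to the limit in the nonlinear cross-diffusion term $g(u_1)\na u_2$: the product of the weakly convergent factor $\na u_2$ with a nonlinear function of $u_1$ can only be identified once the \emph{strong} convergence of $u_1$ is in hand, and securing this is precisely where the time-derivative estimate and the discrete Aubin--Lions argument must be made to cooperate with the two degeneracies of the problem — the mere semi-definiteness of $B(w)$ and the vanishing of $g$ at $u_1\in\{0,1\}$, which prevents the gradient estimate from controlling $\na u_1$ up to the endpoints except through the favorable inequality $g\le\gamma$.
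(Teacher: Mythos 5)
Your proposal is correct and follows essentially the same route as the paper's proof: implicit Euler time discretization in the entropy variables, a higher-order $\eps$-regularization in $H^m(\Omega)$ with $m>d/2$ to gain coercivity and $L^\infty$ control of $w$, Lax--Milgram plus Leray--Schauder at each time step, the discrete entropy inequality obtained by testing with $w^k$, and the limit $(\eps,\tau)\to 0$ via the uniform $H^1$ and dual time-derivative bounds combined with a discrete Aubin--Lions lemma. You also correctly identify the key structural points — the automatic bound $u_1\in(0,1)$ from the change of variables and the need for strong convergence of $u_1^{(\tau)}$ to identify the product $g(u_1)\na u_2$ — so no substantive gap remains.
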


We provide a brief overview of the proof. First, we discretize the equations in time using 
the implicit Euler scheme, which keeps the entropy structure. Since we are working in the 
entropy-variable formulation, we need to regularize the equations in order to be able to 
apply the Lax-Milgram lemma for the linearized problem. The existence of solutions to the 
nonlinear problem then follows from the Leray-Schauder theorem, where the uniform
estimate is a consequence of the entropy inequality \eqref{1.eps1}. This estimate
also provides bounds uniform in the approximation parameters. A discrete
Aubin lemma in the version of \cite{DreherJuengel} provides compactness, which allows
us to perform the limit of vanishing approximation parameters.

Although the proof is similar to the existence proofs in \cite{HittmeirJuengel,Juengel1},
the results of these papers are not directly applicable since our
situation is more general than in \cite{HittmeirJuengel,Juengel1}. The main novelties
of our existence analysis are the new entropy \eqref{1.H} and the treatment of
destabilizing cross diffusion ($\delta<0$).

For the analysis of the large-time asymptotics, we introduce the constant steady state
$u^*=(u_1^*,u_2^*)$, where 
$$
u^*_1=\overline{u}_1^0,\quad 
u_2^*=\frac{f(u_1^*)}{\alpha},\qquad \text{with }\overline{u}_j^0:=\frac{1}{\m(\Omega)}
\int_\Omega u_j^0(x)~\txtd x,~ j\in\{1,2\},
$$
and $\m(\Omega)$ denotes the Lebesgue measure of $\Omega$. Furthermore, we define 
the relative entropy 
$$
  H(u|u^*) = \int_\Omega h(u|u^*)~\txtd x
$$ 
with the entropy density
\begin{align}\label{1.rel.ent}
  h(u|u^*) = h_0(u_1|u_1^*) + \frac{1}{2\delta_0}(u_2-u_2^*)^2, \quad\mbox{where } 
	h_0(u_1|u^*_1) = h_0(u_1) - h_0(u^*_1).
\end{align}
Note that $u_1$ conserves mass, i.e.\ $\overline{u}_1(t):=\m(\Omega)^{-1}$
$\int_\Omega u_1(t)~\txtd x$ is constant in time and $\overline{u}_1(t)=u_1^*$ for
all $t>0$. Thus, by Jensen's inequality, $h_0(u_1|u_1^*)\ge 0$.

\begin{theorem}[Exponential decay]\label{thm.decay}
Let assumptions (H1)-(H3) hold, let $\Omega$ be convex, let
$f$ be Lipschitz continuous with constant $c_L>0$, and let
\begin{equation}\label{1.eps2}
  \delta_0\eps_1(\delta) > \frac{\gamma}{\alpha}c_L^2 c_S,
\end{equation}
where $\eps_1(\delta)>0$ and $c_S>0$ are defined in Lemmas \ref{lem.B} and 
\ref{lem.csi}, respectively. Then, for $t>0$,
\begin{equation}\label{1.mu}
  H(u(t)|u^*) \le \txte^{-{\chi}(\delta) t}H(u^0|u^*), \quad\mbox{where}\quad
	{\chi}(\delta) := \min\left\{\frac{\eps_1(\delta)}{c_S}
	-\frac{\gamma c_L^2}{\alpha\delta_0},\alpha\right\} > 0.
\end{equation}
Moreover, it holds for $t>0$,
\begin{equation}\label{1.L2decay}
  \|u_1(t)-u^*_1\|_{L^2(\Omega)} + \|u_2(t)-u^*_2\|_{L^2(\Omega)}
	\le 2\sqrt{\max\{\gamma,\delta\}H(u^0|u^*)}\txte^{-{\chi}(\delta) t/2}.
\end{equation}
\end{theorem}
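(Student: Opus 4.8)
The plan is to run a relative-entropy (Lyapunov) argument: differentiate $H(u(t)|u^*)$ along the flow, absorb the reaction contribution into a dissipative form using the steady-state relation $u_2^*=f(u_1^*)/\alpha$, and then close the estimate with the convex Sobolev inequality of Lemma~\ref{lem.csi} before invoking Gronwall. The two remaining hypotheses enter exactly where expected: convexity of $\Omega$ only through Lemma~\ref{lem.csi}, and the Lipschitz bound on $f$ only in the treatment of the reaction cross term.

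First I would differentiate the relative entropy. Since $u_2^*$ is spatially constant and $h_0(u_1^*),h_0'(u_1^*)$ are constants, and since $u_1$ conserves mass so that $\int_\Omega h_0'(u_1^*)(u_1-u_1^*)\,\txtd x=0$, one obtains
\begin{equation*}
  \frac{\txtd}{\txtd t}H(u|u^*) = \int_\Omega\Bigl(h_0'(u_1)\pa_t u_1 + \frac{u_2-u_2^*}{\delta_0}\pa_t u_2\Bigr)\txtd x.
\end{equation*}
Inserting \eqref{1.eq1}-\eqref{1.eq2} and integrating by parts (the no-flux conditions \eqref{1.bic} kill the boundary terms) reproduces exactly the gradient form of \eqref{1.dHdt}, which by Lemma~\ref{lem.B} and \eqref{1.eps1} is bounded above by $-\eps_1(\delta)\int_\Omega(|\na u_1|^2/g(u_1)+|\na u_2|^2/\delta_0^2)\,\txtd x$, together with the reaction term $\delta_0^{-1}\int_\Omega(f(u_1)-\alpha u_2)(u_2-u_2^*)\,\txtd x$. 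The crucial point is that, unlike in \eqref{1.eps1}, no additive constant survives: writing $f(u_1)-\alpha u_2=(f(u_1)-f(u_1^*))-\alpha(u_2-u_2^*)$ splits the reaction into a genuinely dissipative piece $-(\alpha/\delta_0)\|u_2-u_2^*\|_{L^2(\Omega)}^2$ and a cross term in $f(u_1)-f(u_1^*)$.

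Next I would estimate the cross term. Lipschitz continuity gives $|f(u_1)-f(u_1^*)|\le c_L|u_1-u_1^*|$, and Young's inequality with the balancing parameter $\lambda=c_L/\alpha$ splits $\delta_0^{-1}\int_\Omega|f(u_1)-f(u_1^*)|\,|u_2-u_2^*|\,\txtd x$ into a piece $\tfrac12(\alpha/\delta_0)\|u_2-u_2^*\|_{L^2(\Omega)}^2$, absorbed by half of the dissipative reaction term, and a piece $(c_L^2/(2\alpha\delta_0))\|u_1-u_1^*\|_{L^2(\Omega)}^2$. The bound $g\le\gamma$ forces $h_0''=1/g\ge 1/\gamma$, whence the Taylor remainder yields $(u_1-u_1^*)^2\le 2\gamma\, h_0(u_1|u_1^*)$; combined with the convex Sobolev inequality $\int_\Omega h_0(u_1|u_1^*)\,\txtd x\le c_S\int_\Omega |\na u_1|^2/g(u_1)\,\txtd x$ from Lemma~\ref{lem.csi}, the $u_1$-dissipation controls both the Fisher information and this remainder. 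Discarding the nonpositive $|\na u_2|^2$ term and collecting everything gives
\begin{equation*}
  \frac{\txtd}{\txtd t}H(u|u^*)\le-\Bigl(\frac{\eps_1(\delta)}{c_S}-\frac{\gamma c_L^2}{\alpha\delta_0}\Bigr)\int_\Omega h_0(u_1|u_1^*)\,\txtd x-\alpha\int_\Omega\frac{(u_2-u_2^*)^2}{2\delta_0}\,\txtd x.
\end{equation*}
Condition \eqref{1.eps2} is precisely what makes the first coefficient positive, so the right-hand side is $\le-\chi(\delta)H(u|u^*)$ with $\chi(\delta)$ as in \eqref{1.mu}, and Gronwall's lemma delivers the exponential decay. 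The $L^2$-estimate \eqref{1.L2decay} then follows from the lower bounds $(u_1-u_1^*)^2\le 2\gamma\, h_0(u_1|u_1^*)$ and $(u_2-u_2^*)^2=2\delta_0\cdot\tfrac{1}{2\delta_0}(u_2-u_2^*)^2$, which bound $\|u_1-u_1^*\|_{L^2(\Omega)}^2+\|u_2-u_2^*\|_{L^2(\Omega)}^2$ by $2\max\{\gamma,\delta_0\}H(u|u^*)$, combined with the decay just obtained.

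I expect the main obstacle to be rigor rather than algebra: the identity for $\txtd H/\txtd t$ is only formal for the weak solutions of Theorem~\ref{thm.ex}, so the differentiation must be justified through the same time-discrete, regularized approximation used in Section~\ref{sec.ex}, with the entropy production passing to the limit as an inequality. The sharp balancing in Young's inequality is what pins down the exact rate $\chi(\delta)$, and verifying the sign of the leading coefficient reduces exactly to \eqref{1.eps2}; the rest is bookkeeping.
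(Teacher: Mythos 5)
Your proposal is correct and follows essentially the same route as the paper: the same relative-entropy differentiation, the same splitting of the reaction term via $f(u_1^*)=\alpha u_2^*$ and Young's inequality with the $\alpha$-balanced parameter, the same combination of the Taylor bound $h_0''\ge 1/\gamma$ with Lemma~\ref{lem.csi}, and Gronwall, with rigor supplied exactly as you anticipate by running the argument on the time-discrete regularized scheme of Section~\ref{sec.ex} and passing to the limit. The only (cosmetic) divergence is that you write $\max\{\gamma,\delta_0\}$ in \eqref{1.L2decay} where the paper writes $\max\{\gamma,\delta\}$; your version is the one actually supported by the estimate.
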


Recall that $\delta_0=\kappa/\gamma$ if $\delta<0$ 
and $\delta_0=\delta$ if $\delta>0$.
The values for $\delta_0\eps_1(\delta)$ are illustrated in Figure~\ref{fig:06}.
It turns out that \eqref{1.eps2} is fulfilled if either the additional
diffusion $\delta>0$ is sufficiently large or if $\gamma/\alpha$ is sufficiently
small. The latter condition means that the influence of the drift term
$g(u_1)\na u_2$ is ``small'' or that the relaxation $-\alpha u_2$ is ``strong''.
The theorem states that in all these cases, the diffusion is sufficiently strong 
to lead to exponential decay to equilibrium. For all parameters fixed, except
$\delta$, we conclude from the condition \eqref{1.eps2} that there exists a 
$\delta_{\textnormal{e}}$ such that exponential decay holds for 
$\delta>\delta_{\textnormal{e}}$ ($\delta\neq 0$) and we see that 
\benn
\lim_{\alpha\ra +\I}\delta_{\textnormal{e}}=\delta^*=-\kappa/\gamma
\eenn
as a singular limit already discussed above. We remark that the exclusion of 
the decay for $\delta=0$ seems to be purely technical and we conjecture that 
exponential decay also holds for $\delta=0$. On the contrary, extensions to
$\alpha\ra0$ are highly nontrivial and we can currently not cover this 
degenerate limiting case using entropy methods.

\begin{figure}[htbp]
\psfrag{d}{\small{$\delta$}}
\psfrag{ds}{\small{$\delta=\delta^*$}}
\psfrag{y}[bl][bl][1][90]{\small{$\delta_0~\epsilon_1(\delta)$}}
	\centering
		\includegraphics[width=0.6\textwidth]{./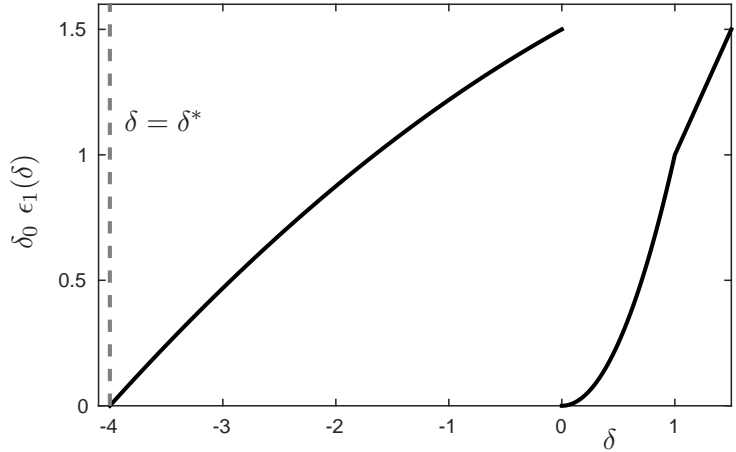}	
		\caption{\label{fig:06}Illustration of $\delta_0\eps_1(\delta)$ for 
		$\kappa=1$ and $\delta=\frac14$ (black curves). The corresponding 
		singular limit $\delta^*=-\kappa/\gamma=-4$ is also marked (grey dashed 
		vertical line).}
\end{figure} 

Theorem \ref{thm.decay} is proved by differentiating the relative entropy $H(u|u^*)$
with respect to time, similar as in \eqref{1.dHdt}. We wish to estimate the
gradient terms from below by a multiple of $H(u|u^*)$. The convex Sobolev
inequality from Lemma \ref{lem.csi} shows that the $L^2$-norm of 
$g(u_1)^{1/2}\na u_1$ is estimated from below by $\int_\Omega h_0(u_1|u_1^*)~\txtd x$, 
up to a factor. The $L^2$-norm of $\na u_2$ is estimated from below by a multiple of
$\int_\Omega(u_2-\overline{u}_2)^2~\txtd x$, using the Poincar\'e inequality. 
However, the variable $u_2$ generally does not conserve mass and in particular,
$\overline{u}_2\neq u_2^*$. We exploit instead the relaxation term in \eqref{1.eq2} 
to achieve the estimate
$$
  H(u(t)|u^*) + {\chi}(\delta) \int_0^t H(u(s)|u^*)~\txtd s \le 0.
$$
Then Gronwall's lemma gives the result. The difficulty is the estimate of
the source term $f(u_1)$. This problem is overcome by controlling the expression
involving $f(u_1)$ by taking into account the contribution coming from
the convex Sobolev inequality. However, we need that $\delta$ is sufficiently large,
i.e., cross diffusion has to dominate reaction.

The above arguments hold on a formal level only. A second difficulty is to make
these arguments rigorous since we need the test function $h_0'(u_1)-h_0'(u_1^*)$,
which is undefined if $u_1=0$ or $u_1=1$ (since $h_0'(0)=-\infty$ and
$h_0'(1)=+\infty$ by Hypothesis (H3)). 
The idea is to perform a transformation of variables in terms of
so-called entropy variables which ensure that $0<u_1<1$ in a time-discrete
setting. Passing from the semi-discrete to the continuous case, the variable
$u_1$ may satisfy $0\le u_1\le 1$ in the limit. 

\subsection{Analytical Bifurcation Analysis}
\label{sec.results2}

As outlined in the introduction, the first natural conjecture for the failure of the entropy 
method is to study bifurcations of the homogeneous 
steady states $u^*=(u_1^*,u_2^*)$, which solve
\begin{equation}
\label{eq:steady}
\begin{array}{l}
  0 = \diver(\na u_1 - g(u_1)\na u_2), \\
  0 = \diver(\delta\na u_1 + \kappa \na u_2) + f(u_1)-\alpha u_2,
\end{array}
\end{equation}
with the no-flux boundary conditions \eqref{1.bic}. To 
study the bifurcations of $u^*$ under variation of $\delta$ we use the right-hand side
of \eqref{eq:steady} to define a bifurcation function and apply the theory of Crandall-Rabinowitz
\cite{CrandallRabinowitz,Kielhoefer}. The problem is that $u^*$ is not an \emph{isolated} bifurcation
branch as a function of $\delta$ since fixing any initial mass yields a different one-dimensional
family of homogeneous steady states with
\begin{equation}
\label{eq:mass}
u_1^* = \frac{1}{\m(\Omega)}\int_\Omega u_1(x)~\txtd x\geq 0.
\end{equation}
Hence, the standard approach has to be modified and we follow arguments that can be found in
\cite{ChertockKurganovWangWu,ShiWang,WangXu}. It is helpful to introduce some notations first.
For $p>d$, let 
\be
\label{eq:space_ba}
\begin{array}{lcl}
\mathcal{X}&:=&\{ u\in W^{2,p}(\Omega): \nabla u \cdot \nu=0\textnormal{ on }\partial\Omega\},\\
\mathcal{Y}&:=&L^p(\Omega),\\
\mathcal{Y}_0&:=&\left\{u_1\in L^p(\Omega):\int_\Omega u_1(x)~\txtd x=0\right\}\\
\end{array}
\ee
where the space $\mathcal{X}$ includes standard Neumann boundary conditions. 
Due to the Sobolev embedding theorem we know that $W^{2,p}(\Omega)$ is continuously embedded 
in $C^{(1+\theta)}(\bar{\Omega})$ for some $\theta\in(0,1)$. 
If Neumann boundary conditions hold, then our original boundary conditions~\eqref{1.bic} hold as 
well. However, the converse is only true if we can invert the diffusion matrix, i.e., as long 
as $\delta\neq \delta_d=-\frac{\kappa}{g(u_1)}$. In particular, we shall always assume for the 
local bifurcation analysis of homogeneous steady states that 
\be
\label{eq:snondegen}
\delta\neq \delta_\textnormal{d}=-\frac{\kappa}{g(u_1^*)}.
\ee
This implies that me may not find all possible bifurcations and the single point when
the diffusion matrix vanishes has to be treated separately; we leave this as a goal for
future work.

Next, we define the mapping $\mathcal{F}:\mathcal{X}\times\mathcal{X}\times\R\longrightarrow\mathcal{Y}_0\times\mathcal{Y}\times 
\R$ by
\begin{equation}\label{F}
	\mathcal{F}(u_1,u_2,\delta):=
	\begin{pmatrix}
	\diver(\nabla u_1- g(u_1)\nabla u_2) \\
	\delta\Delta u_1+\kappa\Delta u_2-\alpha u_2+f(u_1) \\
	\int_\Omega u_1(x)~\txtd x - \m(\Omega) u_1^* \\
	\end{pmatrix}.
\end{equation}
The first two terms are the usual bifurcation functions one would naturally define, the third 
term is used to isolate the bifurcation branch for the mapping $\cF$, i.e., to avoid the 
problem with mass conservation, while the last two terms take into account the boundary 
conditions. We know that there exists a family of homogeneous steady state solutions 
$$
\cF(u_1^*,u_2^*,\delta)=0
$$
for each $\delta\in\R$. The goal is to find the parameter values $\delta_{\textnormal{b}}$ such 
that at $\delta=\delta_{\textnormal{b}}$ a non-trivial (or non-homogeneous) branch of steady 
states is generated at the bifurcation point; see also Figure~\ref{fig:00}. We are going to 
check that $\cF$ is $C^1$-smooth and the Fr\'echet derivative $\txtD_u\mathcal{F}$ with respect 
to $u$ at a point $\tilde{u}=(\tilde{u}_1,\tilde{u}_2)$ is given by
\begin{equation} \label{eq:DF}
  \cA_\delta(\tilde{u})\begin{pmatrix} U_1\\ U_2\end{pmatrix}:=\txtD_u\mathcal{F}(\tilde{u},\delta)
  \begin{pmatrix} U_1\\ U_2\end{pmatrix}=
  \begin{pmatrix}
  \Delta U_1-\diver[ g'(\tilde{u}_1)(\nabla \tilde{u}_2)U_1+g(\tilde{u}_1)\nabla U_2] \\
  \delta\Delta U_1+\kappa\Delta U_2-\alpha U_2+f'(\tilde{u}_1)U_1 \\
	\int_\Omega U_1(x)~\txtd x\\
  \end{pmatrix}
\end{equation}
where $(U_1,U_2)^\top\in\cX\times \cX$ and $\cA_\delta:\cX\times \cX\ra \cY_0\times \cY\times \R$. 
We already know from Theorem \ref{thm.decay} that for all $\delta>\delta_{\txte}$ ($\delta\neq 0$), 
the homogeneous steady state $u^*$ is globally stable. Clearly this implies local stability as well 
and this fact can also be checked by studying the spectrum of $\cA_\delta(u^*)$. From the structure 
of the cross-diffusion equations \eqref{1.eq1}-\eqref{1.eq2} one does expect destabilization of the 
homogeneous state upon decreasing $\delta$.\medskip

\begin{theorem}
 \label{thm:main_ana_bif}
Let $u^*=(u_1^*,u_2^*)$ be a homogeneous steady state, consider the generic parameter case with
$-\kappa\neq \delta g(u_{1}^*)$ and suppose all eigenvalues $\mu_n$ of the negative Neumann 
Laplacian on $\Omega$ are simple. Then the following hold:
 \begin{itemize}
 \item[(R1)] $\txtD_u\cF(\tilde{u},\delta):\mathcal{X}
 \times\mathcal{X}\ra\mathcal{Y}_0 \times\mathcal{Y}\times\R$ is a Fredholm 
 operator with index zero;
 \item[(R2)] there exists a sequence of bifurcation points $\delta=\delta^n_{\textnormal{b}}$ 
 such that $\dim\left(\cN[\txtD_u\mathcal{F}(u^*,\delta^n_{\textnormal{b}})]\right)=1$, 
 where $\cN[\cdot]$ denotes the nullspace;
 \item[(R3)] there exist simple real eigenvalues $\lambda_n(\delta)$ of $\cA_\delta(u^*)$, 
 which satisfy $\lambda_n(\delta^n_\textnormal{b})=0$. Furthermore, $\lambda_n(\delta)$ crosses 
 the imaginary axis at $\delta^n_{\textnormal{b}}$ with non-zero speed, i.e.,
 $\txtD_{\delta u}F(u^*,\delta^n_{\textnormal{b}})e^n_{\textnormal{b}}\notin 
 \cR[\cA_{\delta^n_{\textnormal{b}}}]$, where $\cR[\cdot]$ denotes the range and 
 $\textnormal{span}[e^n_{\textnormal{b}}]=\cN[\cA_{\delta^n_{\textnormal{b}}}]$.
\end{itemize}
\end{theorem}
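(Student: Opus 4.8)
The plan is to verify the hypotheses of the Crandall--Rabinowitz theorem on bifurcation from a simple eigenvalue, in the constrained version that accounts for the mass-conservation degeneracy built into $\cF$. The decisive structural observation is that, since the steady state $u^*=(u_1^*,u_2^*)$ is spatially constant, $\na u_2^*=0$, so the term $g'(\tilde u_1)(\na\tilde u_2)U_1$ in \eqref{eq:DF} drops out at $\tilde u=u^*$ and the linearization becomes the constant-coefficient operator
\begin{equation*}
  \cA_\delta(u^*)\begin{pmatrix}U_1\\U_2\end{pmatrix}
  =\begin{pmatrix}
  \Delta U_1 - g(u_1^*)\Delta U_2\\
  \delta\Delta U_1+\kappa\Delta U_2-\alpha U_2+f'(u_1^*)U_1\\
  \int_\Omega U_1~\txtd x
  \end{pmatrix}.
\end{equation*}
Its principal part is governed by the constant matrix $P=\left(\begin{smallmatrix}1 & -g(u_1^*)\\ \delta & \kappa\end{smallmatrix}\right)$ with $\det P=\kappa+\delta g(u_1^*)$, which is nonzero precisely under the standing hypothesis $-\kappa\neq\delta g(u_1^*)$, i.e.\ $\delta\neq\delta_\textnormal{d}$. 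This invertibility is what drives all three assertions.

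For (R1), I would write $\txtD_u\cF(\tilde u,\delta)=\cL+\cK$, where $\cL$ is the principal second-order part and $\cK$ collects the zeroth- and first-order terms (including $g'(\tilde u_1)(\na\tilde u_2)U_1$ for general $\tilde u$) together with the scalar constraint row. Because $\det P\neq0$, the symbol $-P|\xi|^2$ is invertible for $\xi\neq0$ and, together with the Neumann conditions encoded in $\cX$, the first two components define an elliptic problem $\cX\times\cX\to\cY\times\cY$ that is Fredholm of index zero; here a linear recombination of the two equations decouples the principal part into two scalar second-order Neumann operators, which handles both signs of $\det P$, i.e.\ both $\delta>\delta_\textnormal{d}$ and $\delta<\delta_\textnormal{d}$. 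The operator $\cK$ is compact by the compact embedding $W^{2,p}(\Omega)\hookrightarrow L^p(\Omega)$. To obtain the index of the constrained operator I would track it through the two modifications built into the target: the first component automatically lands in $\cY_0$ since it is in divergence form and the boundary conditions in $\cX$ force $\int_\Omega(\Delta U_1-g(u_1^*)\Delta U_2)=0$, and restricting the target to this codimension-one subspace raises the index by one, while appending the scalar functional $U_1\mapsto\int_\Omega U_1$ lowers it by one. The net index is zero, giving (R1).

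For (R2), I would compute the kernel by expanding in the eigenbasis $\{\phi_n\}$ of the negative Neumann Laplacian, $-\Delta\phi_n=\mu_n\phi_n$. The constraint $\int_\Omega U_1=0$ removes the constant mode $\mu_0=0$, so I set $U_1=a\phi_n$, $U_2=b\phi_n$ with $n\ge1$. The first row forces $a=g(u_1^*)b$, and substituting into the second row yields the scalar solvability condition $\mu_n(\kappa+\delta g(u_1^*))=g(u_1^*)f'(u_1^*)-\alpha$, that is
\begin{equation*}
  \delta=\delta_\textnormal{b}^n:=\delta_\textnormal{d}+\frac{1}{\mu_n}\Bigl[f'(u_1^*)-\frac{\alpha}{g(u_1^*)}\Bigr].
\end{equation*}
Since $\mu_n$ is simple, $\phi_n$ and hence the pair $(a,b)$ are unique up to scaling, so at $\delta=\delta_\textnormal{b}^n$ the nullspace is spanned by the single vector $e_\textnormal{b}^n=(g(u_1^*)\phi_n,\phi_n)^\top$ and is one-dimensional --- provided no second index $m\neq n$ gives $\delta_\textnormal{b}^m=\delta_\textnormal{b}^n$. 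This is where genericity enters: distinct simple eigenvalues $\mu_n$ together with $f'(u_1^*)\neq\alpha/g(u_1^*)$ make the $\delta_\textnormal{b}^n$ pairwise distinct and different from $\delta_\textnormal{d}$, so (R1) applies at each of them and (R2) holds.

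For (R3), the same modal reduction turns the spectral problem for the first two components of $\cA_\delta(u^*)$ on the $n$-th mode into the $2\times2$ matrix
\begin{equation*}
  M_n(\delta)=\begin{pmatrix}-\mu_n & g(u_1^*)\mu_n\\ f'(u_1^*)-\delta\mu_n & -\kappa\mu_n-\alpha\end{pmatrix}.
\end{equation*}
Its trace $-(1+\kappa)\mu_n-\alpha$ is strictly negative, so at a crossing where $\det M_n=0$ the two eigenvalues are $0$ and $\operatorname{tr}M_n<0$; both are real and distinct, whence the crossing eigenvalue $\lambda_n(\delta)$ is real and simple near $\delta_\textnormal{b}^n$, with $\lambda_n(\delta_\textnormal{b}^n)=0$. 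A direct computation gives $\tfrac{\txtd}{\txtd\delta}\det M_n=g(u_1^*)\mu_n^2\neq0$, and since the companion eigenvalue stays away from zero this forces $\lambda_n$ to cross the imaginary axis with nonzero speed. In the abstract formulation this is the transversality $\txtD_{\delta u}F(u^*,\delta_\textnormal{b}^n)e_\textnormal{b}^n=(0,-g(u_1^*)\mu_n\phi_n,0)^\top\notin\cR[\cA_{\delta_\textnormal{b}^n}]$, which I would verify by pairing against the one-dimensional cokernel guaranteed by (R1)--(R2). I expect the genuine obstacle to be (R1): one must set up the constrained functional-analytic framework so that the non-symmetric, sign-indefinite diffusion matrix still yields a Fredholm operator of index zero --- in particular verifying the complementing (Lopatinskii--Shapiro) condition for the Neumann data when $\det P<0$ and doing the index bookkeeping so that the passage to $\cY_0$ and the extra constraint exactly cancel. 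Once invertibility of $P$ ($\delta\neq\delta_\textnormal{d}$) is secured, (R2) and (R3) reduce to the elementary $2\times2$ computations above.
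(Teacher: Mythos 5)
Your proposal follows essentially the same route as the paper's proof (Lemmas \ref{propF} and \ref{cond_RC}): split the linearization into an elliptic system plus a compact perturbation carrying the mass constraint, obtain the Fredholm index-zero property from Petrovskii ellipticity under $\kappa+\delta g(u_1^*)\neq 0$ together with the complementing condition and the same $\cY_0$-versus-constraint index bookkeeping, compute the kernel mode-by-mode in the Neumann eigenbasis to get $\delta_{\textnormal{b}}^n$ and the one-dimensional nullspace spanned by $e_{\mu_n}(g(u_1^*),1)^\top$, and verify transversality by projecting onto the $n$-th mode (the paper phrases this as the unsolvability of the singular $2\times 2$ system \eqref{PQ}, you phrase it via $\tfrac{\txtd}{\txtd\delta}\det M_n=g(u_1^*)\mu_n^2\neq 0$; these are equivalent). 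One caveat: your claim that a \emph{real} linear recombination decouples the principal part into two scalar Neumann operators fails on part of the parameter range, since $\left(\begin{smallmatrix}1&-g(u_1^*)\\ \delta&\kappa\end{smallmatrix}\right)$ has complex eigenvalues whenever $(1-\kappa)^2<4\delta g(u_1^*)$ (e.g.\ $\kappa=1$, $\delta>0$); this is harmless for the conclusion because the Fredholm property only needs the ellipticity and Agmon/Lopatinskii--Shapiro conditions you also invoke, which is exactly how the paper proceeds via \cite[Thm.~3.3]{ShiWang}.
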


The results from (R1)-(R3) hold quite generically (i.e., for $\delta\neq \delta_{\txtd}$ and 
for generic domains~\cite{Uhlenbeck}) and yield, upon applying a standard result by 
Crandall-Rabinowitz \cite{CrandallRabinowitz,CrandallRabinowitz1,Kielhoefer}, the existence 
of branches of non-trivial solutions 
$$
(u_1[s],u_2[s],\delta[s])\in\cX\times \cX\times \R,\qquad 
(u_1[0],u_2[0],\delta[0])=(u_1^*,u_2^*,\delta^n_ {\textnormal{b}}),
$$
where $s\in[-s_0,s_0]$ parametrizes the steady-state branch locally for some small $s_0>0$, 
and $(u_1[s],u_2[s],\delta[s])\neq (u_1^*,u_2^*,\delta^n_{\textnormal{b}})$ for 
$s\in[-s_0,0)\cup(0,s_0]$. Slightly more 
precise information about the branch can be obtained using the eigenfunction $e_{\textnormal{b}}$ and 
we refer to Section \ref{sec.bifurcation.res} for the details. The main conclusion of the bifurcation 
theorem is that we know that the entropy method cannot show the decay to steady state for all 
parameter regions. However, to track the non-trivial solution branches in parameter space, it is 
usually not possible to compute the global shape of all bifurcation branches analytically.
In this case, numerical bifurcation analysis is extremely helpful.

\subsection{Numerical Bifurcation Analysis}
\label{sec.results3}

The results from Section \ref{sec.results1}-\ref{sec.results2} do not provide a full exploration 
of the dynamical structure of the solutions for the parameter regime $\delta<\delta^*$. To understand 
this regime better we study the bifurcations of \eqref{eq:steady} numerically for
\be
\label{eq:fixed_numerics}
f(s)=s(1-s),\qquad g(s)=s(1-s),\qquad s\in\Omega=[0,l]\subset \R.
\ee
for some interval length $l>0$. Note that this yields a boundary-value problem (BVP) involving 
two second-order ordinary differential equations (ODEs)
\bea
0&=&\frac{\txtd}{\txtd x}\left(\frac{\txtd u_1}{\txtd x}-g(u_1)\frac{\txtd u_2}{\txtd x}\right),
\label{eq:ODE1_bad}\\
\label{eq:ODE2_bad} 
0&=&\delta\frac{\txtd^2 u_1}{\txtd x^2}+\kappa \frac{\txtd^2 u_2}{\txtd x^2}-\alpha u_2+f(u_1).
\eea
with boundary conditions 
\bea
0&=&\frac{\txtd u_1}{\txtd x}(0)-g(u_1(0))\frac{\txtd u_2}{\txtd x}(0),\qquad 
0=\delta \frac{\txtd u_1}{\txtd x}(0)+\kappa\frac{\txtd u_2}{\txtd x}(0), \label{eq:ODE3_bad}\\
0&=&\frac{\txtd u_1}{\txtd x}(1)-g(u_1(1))\frac{\txtd u_2}{\txtd x}(1),\qquad
0=\delta \frac{\txtd u_1}{\txtd x}(1)+\kappa\frac{\txtd u_2}{\txtd x}(1). \label{eq:ODE4_bad}
\eea
An excellent available tool to study the problem \eqref{eq:ODE1_bad}-\eqref{eq:ODE4_bad} is 
the software \texttt{AUTO} \cite{Doedel_AUTO2007} for numerical continuation of BVPs; for other 
possible options and extensions we refer to the discussion in Section \ref{sec.outlook}. 
\texttt{AUTO} is precisely designed to deal with BVPs for ODEs of the form
\be
\label{eq:BVP_AUTO}
\frac{\txtd z}{\txtd x}=F(z;p),\qquad x\in[0,1],\quad G(w(0),w(1))=0
\ee
where $F:\R^N\times \R^P\ra \R^N$, $G:\R^N\times \R^N\ra \R^N$ and $p\in\R^P$ are 
parameters and $z=z(x)\in\R^N$ is the unknown vector. It is easy to re-write 
\eqref{eq:ODE1_bad}-\eqref{eq:ODE4_bad} as a system in the form \eqref{eq:BVP_AUTO} of 
four first-order ODEs, i.e., we get $N=4$, consider the scaling $\tilde{x}=x/l$ to normalize 
the interval length to one, then drop the tilde for $x$ again, and let 
$$
p_1:=\delta,\quad p_2:=\kappa,\quad p_3:=\alpha,\quad p_4:=l, 
$$
so $P=4$ with primary bifurcation parameter $\delta$. For more background on \texttt{AUTO} and 
on numerical continuation we refer to \cite{KrauskopfOsingaGalan-Vioque,Keller,Govaerts}. In the 
setup \eqref{eq:BVP_AUTO} one can numerically continue the family of homogeneous solutions 
$$
(u^*,\delta)=(u_1^*,u_2^*,\delta)
$$
as a function of $\delta$, i.e., to compute $u^*=u^*(\cdot;\delta)$ for $\delta$ in some 
specified parameter interval. Although this 
calculation yields bifurcation points for some $\delta$ values, it is not straightforward
to use the formulation \eqref{eq:ODE1_bad}-\eqref{eq:ODE2_bad} to switch onto the 
non-homogeneous solution branches generated at the bifurcation point. The problem is
due to the mass conservation since 
$$
\overline{u}_1=\m(\Omega)^{-1}\int_\Omega u_1~\txtd x=u_1^*, \qquad 
u_2^*=\frac{f(u_1^*)}{\alpha}
$$
is a solution for every positive initial mass $\overline{u}_1^0$. In particular, the branch
of solutions is not isolated and there exist parametric two-dimensional families of solutions. There 
are multiple ways to deal with this problem; see also Section \ref{sec.outlook}. One possibility is 
to resolve the degeneracy of the problem via a small parameter $0<\rho\ll1$ and consider  
\bea
0&=&\frac{\txtd}{\txtd x}\left(\frac{\txtd u_1}{\txtd x}-g(u_1)\frac{\txtd u_2}{\txtd x}\right)-
\rho(u_1-\overline{u}_1),
\label{eq:ODE1}\\
0&=&\delta\frac{\txtd^2 u_1}{\txtd x^2}+\kappa \frac{\txtd^2 u_2}{\txtd x^2}-\alpha u_2+f(u_1).
\label{eq:ODE2}
\eea
for a fixed positive parameter $\overline{u}_1>0$. In particular, upon setting 
$$
z_1:=u_1,\qquad z_2:=u_2, \qquad z_3:=\frac{\txtd u_1}{\txtd x},\qquad z_4:=\frac{\txtd u_2}{\txtd x}, 
$$
as well as 
$$
p_5:=\overline{u}_1,\qquad p_6:=\rho,\qquad P=6,
$$
we end up with a problem of the form \eqref{eq:BVP_AUTO} by transforming the two second-order ODEs 
to four first-order ODEs and re-labelling parameters. The vector field for the ODE-BVP 
we study numerically is then given by 
\be
\label{eq:vf}
F(z;p)=
\begin{pmatrix}
p_4 z_3 \\
p_4 z_4 \\
p_4[-g(z_1)f(z_1)+p_3g(z_1)z_2 + p_2 g'(z_1)z_3z_4+p_2p_6(z_1-p_5)]/\cD_g\\
p_4[-f(z_1) + p_3z_2 - p_1g'(z_1)z_3z_4 - p_1(z_1-p_5)p_6]/\cD_g\\
\end{pmatrix}
\ee
where $\cD_g:=p_2+p_1g(z_1)$ and the detailed choices for the free parameters are discussed in 
Section \ref{sec.numerics.res}. Observe that the system \eqref{eq:vf} becomes singular if 
$\cD_g=0$, which is precisely the condition $\delta\neq -\kappa/g(u_1)$ already 
discovered above. Therefore, we would need also for the numerical analysis a re-formulation
(or de-singularization) of the problem to deal with this singularity and we postpone this 
problem to future work. As mentioned above, the primary bifurcation parameter we are 
going to be interested in is $\delta=p_1$. The main results of the numerical bifurcation analysis, 
which are presented in full detail in Section \ref{sec.numerics.res}, are the following:

\begin{itemize}
 \item[(B1)] As predicted by the analytical results, we find the existence of local bifurcation 
points on the branch of homogeneous steady states in the parameter region with 
$\delta<\delta_{\textnormal{d}}$ for the case of sufficiently large $\alpha$ and 
for $\delta>\delta_{\textnormal{d}}$ for the case of sufficiently small $\alpha$.
At each bifurcation point on the homogeneous branch, a simple eigenvalue crosses the imaginary axis.
 \item[(B2)] The non-trivial (i.e.~non-homogeneous) solution branches consist of solutions of 
multiple 'interfaces' or 'layers'; branches originating further away from $\delta_{\textnormal{d}}$ 
contain less layers. The branches can acquire sharper layers upon variation of further parameters
which is important for information herding.
 \item[(B3)] At the local bifurcation points, we observe the emergence of two symmetric branches 
 of solutions for the case when the nonlinearities are identical quadratic nonlinearities
 of the form $s\mapsto s(1-s)$.
 \item[(B4)] We also construct non-homogeneous solutions for $\rho=0$ by a homotopy continuation 
 step first continuing onto the non-trivial branches in $\delta$ and then decreasing $\rho$ to zero
 in a second continuation step.
 \item[(B5)] Furthermore, we also study the shape deformation of non-trivial solutions upon
variation of $\kappa$ and the domain length $l$. The numerical results show that the main interesting
structures of the problem have already been obtained by just varying $\delta$ and $\alpha$.
\end{itemize}

\section{Entropy Method -- Proofs}
\label{sec.ent.proofs}

\subsection{Proof of Theorem \ref{thm.ex}}\label{sec.ex}

First, we prove that the new diffusion matrix $B(w)$, defined in \eqref{1.B},
is positive semi-definite if $\delta$ is not too negative.

\begin{lemma}\label{lem.B}
Assume (H3) and $\delta>-\kappa/\gamma$, $\delta\neq 0$. 
Then the matrix $B(w)$ is positive semi-definite, and there exists 
$\eps_1(\delta)>0$ such that for all $z=(z_1,z_2)^\top\in\R^2$, $w\in\R^2$:
$$
  z^\top B(w)z \ge \eps_1(\delta)(g(u_1)z_1^2 + z_2^2).
$$
It holds $\eps_1(\delta)\to 0$ as $\delta\searrow 0$ and 
$\delta\searrow -\kappa/\gamma$ (see Figure~\ref{fig:05a}).
\end{lemma}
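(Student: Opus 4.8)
The plan is to reduce the positive semi-definiteness of $B(w)$ to an elementary analysis of a $2\times 2$ quadratic form parametrized by the scalar $g:=g(u_1)\in[0,\gamma]$. First I would compute the form directly from \eqref{1.B}: for $z=(z_1,z_2)^\top\in\R^2$,
$$
z^\top B(w)\,z = g\,z_1^2 + (\delta-\delta_0)\,g\,z_1 z_2 + \delta_0\kappa\,z_2^2 .
$$
The natural next step is to split according to the definition of $\delta_0$, since the two cases behave quite differently.

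For $\delta>0$ one has $\delta_0=\delta$, so the cross term cancels and $z^\top B(w)z = g\,z_1^2 + \delta\kappa\,z_2^2$, which is immediately bounded below by $\min\{1,\delta\kappa\}\,(g\,z_1^2+z_2^2)$. This already gives the claim in this regime with $\eps_1(\delta)=\min\{1,\delta\kappa\}$, and manifestly $\eps_1(\delta)\to0$ as $\delta\searrow0$.

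For $-\kappa/\gamma<\delta<0$ one has $\delta_0=\kappa/\gamma$, and here I would substitute $y_1:=\sqrt{g}\,z_1$ (legitimate since $g\ge0$), turning the form into $y_1^2 + (\delta-\kappa/\gamma)\sqrt{g}\,y_1 z_2 + (\kappa^2/\gamma)z_2^2$, i.e.\ the quadratic form of the \emph{symmetric} matrix
$$
M(g):=\begin{pmatrix} 1 & \tfrac12(\delta-\tfrac{\kappa}{\gamma})\sqrt{g}\\ \tfrac12(\delta-\tfrac{\kappa}{\gamma})\sqrt{g} & \tfrac{\kappa^2}{\gamma}\end{pmatrix}.
$$
The key computation is then the determinant: using $g\le\gamma$ from (H3) together with the negative sign of the $g$-coefficient,
$$
\det M(g) = \frac{\kappa^2}{\gamma} - \frac14\Bigl(\delta-\frac{\kappa}{\gamma}\Bigr)^2 g \;\ge\; \frac{\kappa^2}{\gamma} - \frac{\gamma}{4}\Bigl(\delta-\frac{\kappa}{\gamma}\Bigr)^2 =: d(\delta).
$$
The crux of the whole lemma is the observation that $d(\delta)>0$ exactly when $|\delta-\kappa/\gamma|<2\kappa/\gamma$, and the restriction $\delta\in(-\kappa/\gamma,0)$ forces $\delta-\kappa/\gamma\in(-2\kappa/\gamma,-\kappa/\gamma)$, so this strict inequality always holds. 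This is precisely where the threshold $\delta^*=-\kappa/\gamma$ is forced upon us, and I expect this sign bookkeeping to be the only delicate point.

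Finally, since $M(g)$ has leading entry $1>0$ and $\det M(g)\ge d(\delta)>0$, it is positive definite, and with the constant trace $\mathrm{tr}\,M(g)=1+\kappa^2/\gamma$ I would conclude the uniform bound $\lambda_{\min}(M(g))\ge \det M(g)/\mathrm{tr}\,M(g)\ge d(\delta)/(1+\kappa^2/\gamma)=:\eps_1(\delta)>0$, whence
$$
z^\top B(w)\,z = (y_1,z_2)\,M(g)\,(y_1,z_2)^\top \ge \eps_1(\delta)\,(y_1^2+z_2^2)=\eps_1(\delta)\,(g\,z_1^2+z_2^2).
$$
The weighting by $g(u_1)$ on the right-hand side is exactly what keeps the estimate robust at the degenerate points $u_1\in\{0,1\}$, where $g=0$, $y_1=0$ and both sides collapse to the $z_2$-term. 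The stated limits $\eps_1(\delta)\to0$ as $\delta\searrow0$ and as $\delta\searrow-\kappa/\gamma$ then follow from the explicit formulas, since $d(\delta)\to0$ at the lower endpoint.
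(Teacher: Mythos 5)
Your proof is correct and follows essentially the same route as the paper: the same case split on the sign of $\delta$, the same observation that the cross term vanishes for $\delta>0$ giving $\eps_1(\delta)=\min\{1,\delta\kappa\}$, and for $-\kappa/\gamma<\delta<0$ the same use of $g\le\gamma$ to reduce everything to the discriminant condition $(\kappa-\gamma\delta)^2<4\kappa^2$, which is exactly $\delta>-\kappa/\gamma$. The only difference is in how the uniform constant is extracted: the paper completes the square with a hand-picked non-optimal $\eps_0(\delta)$, whereas you symmetrize to $M(g)$ and invoke $\lambda_{\min}(M)\ge \det M/\operatorname{tr}M$; both yield explicit non-optimal $\eps_1(\delta)$ with the same degeneration at the endpoints.
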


\begin{figure}[htbp]
\psfrag{d}{\small{$\delta$}}
\psfrag{ds}{\small{$\delta=\delta^*$}}
\psfrag{y}[bl][bl][1][90]{\small{$\epsilon_1(\delta)$}}
	\centering
		\includegraphics[width=0.6\textwidth]{./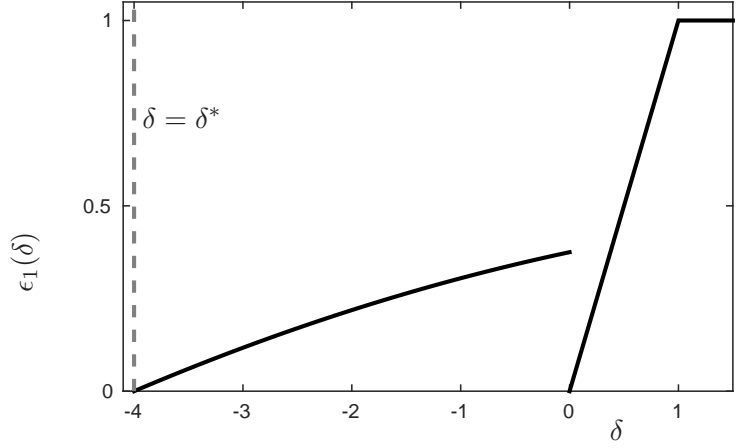}	
		\caption{\label{fig:05a}Illustration of $\eps_1(\delta)$ for 
		$\kappa=1$ and $\delta=\frac14$ (black curves). The corresponding 
		singular limit $\delta^*=-\kappa/\gamma=-4$ is also marked (grey dashed 
		vertical line).}
\end{figure} 

For later use, we note that the lemma implies that
\begin{equation}\label{ex.wBw}
  \na w:B(w)\na w \ge \eps_1(\delta)\left(\frac{|\na u_1|^2}{g(u_1)}
	+ \frac{|\na u_2|^2}{\delta_0^2}\right),
\end{equation}
where $w=(w_1,w_2)=(h_0'(u_1),u_2/\delta_0)$ are the entropy variables
introduced in the introduction and $A:B=\sum_{i,j}A_{ij}B_{ij}$ for
two matrices $A=(A_{ij})$, $B=(B_{ij})$.

\begin{proof}
Let $z=(z_1,z_2)^\top\in\R^2$. Then
$$
  z^\top B(w)z = g(u_1)z_1^2 - (\delta_0-\delta)g(u_1) z_1z_2 
	+ \delta_0\kappa z_2^2.
$$
If $\delta>0$, then $\delta_0=\delta$ and the mixed term vanishes, showing the claim
for $\eps_1(\delta)=\min\{1,\delta\kappa\}$.
If $-\kappa/\gamma<\delta<0$, we have $\delta_0=\kappa/\gamma$. 
We make the (non-optimal) choice
$$
  \eps_0=\eps_0(\delta) = \frac12\left(1 - \frac14\left(1-\frac{\gamma\delta}{\kappa}
	\right)^2\right) > 0.
$$
Then $\eps_0<1-(1-\gamma\delta/\kappa)^2/4$, which is equivalent to
$(\kappa-\gamma\delta)^2<4(1-\eps_0)\kappa^2$. Thus, using $g(u_1)\le\gamma$
(see assumption (H3)),
\begin{align*}
  z^\top B(w)z &= g(u_1)z_1^2 - \left(\frac{\kappa}{\gamma}-\delta\right)g(u_1) z_1z_2 
	+ \frac{\kappa^2}{\gamma} z_2^2 \\
	&= \eps_0 g(u_1)z_1^2	
	+ (1-\eps_0)g(u_1)\left(z_1 - \frac{(\kappa-\gamma\delta)z_2}{2\gamma(1-\eps_0)}
	\right)^2 \\
	&\phantom{xx}{}
	+ \frac{1}{\gamma}\left(\kappa^2 
	- \frac{(\kappa-\gamma\delta)^2}{4\gamma(1-\eps_0)}g(u_1)
	\right)z_2^2 \\
	&\ge \eps_0 g(u_1)z_1^2	+  \frac{1}{\gamma}\left(\kappa^2 
	- \frac{(\kappa-\gamma\delta)^2}{4(1-\eps_0)}\right)z_2^2.
\end{align*}
In view of the choice of $\eps_0$, the bracket on the right-hand side is positive,
and the claim follows after choosing 
$\eps_1(\delta)=\min\{\eps_0(\delta),
[\kappa^2-(\kappa-\gamma\delta)^2/(4(1-\eps_0(\delta)))]/\gamma\}>0$ for
$-\kappa/\gamma<\delta<0$.
\end{proof}

The proof of Theorem \ref{thm.ex} is based on the solution of a time-discrete 
and regularized problem.

{\em Step 1: Solution of an approximate problem.}
Let $T>0$, $N\in\N$, $\tau=T/N$, $\eps>0$, and $n\in\N$ such that $n>d/2$.
Then $H^n(\Omega;\R^2)\hookrightarrow L^\infty(\Omega;\R^2)$.
Let $w^{k-1}\in L^\infty(\Omega;\R^2)$ be given. If $k=1$, we define
$w^0=h'(u^0)$.  
We wish to find $w^k\in H^n(\Omega;\R^2)$ such that
\begin{align}\label{ex.disc}
  \frac{1}{\tau}\int_\Omega & (u(w^k)-u(w^{k-1}))\cdot\phi ~\txtd x
	+ \int_\Omega\na\phi:B(w^k)\na w^k ~\txtd x \\
	&{}+ \eps\int_\Omega\bigg(\sum_{|\beta|=n}D^\beta w^k\cdot D^\beta\phi 
	+ w^k\cdot\phi\bigg)~\txtd x = \int_\Omega F(u(w^k))\cdot\phi ~\txtd x \nonumber
\end{align}
for all $\phi\in H^n(\Omega;\R^2)$, where $\beta\in\N_0^n$ is a multi-index,
$D^\beta$ is the corresponding partial derivative, $u(w)=(h')^{-1}(w)$ for $w\in\R$,
and we recall that $F(u)=(0,f(u_1)-\alpha u_2)^\top$.
By definition of $h_0$, we find that $u_1(w)\in(0,1)$, thus avoiding any
degeneracy at $u_1=0$ or $u_1=1$.

The existence of a solution to
\eqref{ex.disc} will be shown by a fixed-point argument. In order to
define the fixed-point operator, let $y\in L^\infty(\Omega;\R^2)$ and $\eta\in[0,1]$
be given. We solve the linear problem
\begin{equation}\label{ex.LM}
  a(w,\phi) = G(\phi)\quad\mbox{for all }\phi\in H^n(\Omega;\R^2),
\end{equation}
where 
\begin{align*}
  a(w,\phi) &= \int_\Omega\na\phi:B(y)\na w ~\txtd x + \eps\int_\Omega
	\left(\sum_{|\beta|=n}D^\beta w\cdot D^\beta\phi + w\cdot\phi\right)~\txtd x, \\
	G(\phi) &= -\frac{\eta}{\tau}\int_\Omega\big(u(y)-u(w^{k-1})\big)~\txtd x
	+ \eta\int_\Omega F(u(y))\cdot\phi ~\txtd x.
\end{align*}
The forms $a$ and $G$ are bounded on $H^n(\Omega;\R^2)$. Moreover, in view of the
positive semi-definiteness of $B(y)$ and the generalized Poincar\'e inequality
(see Chap.~II.1.4 in \cite{Temam}), the bilinear form $a$ is coercive:
$$
  a(w,w) \ge \eps\int_\Omega\bigg(\sum_{|\beta|=n}|D^\beta w|^2 + |w|^2\bigg)~\txtd x
	\ge \eps c\|w\|_{H^n(\Omega)} \quad\mbox{for }w\in H^n(\Omega;\R^2).
$$
By the Lax-Milgram lemma, there exists a unique solution $w\in H^n(\Omega;\R^2)
\hookrightarrow L^\infty(\Omega;\R^2)$ to \eqref{ex.LM}. This defines the fixed-point
operator $S:L^\infty(\Omega;\R^2)\times[0,1]\to L^\infty(\Omega;\R^2)$, 
$S(y,\eta)=w$. 

By construction, $S(y,0)=0$ for all $y\in L^\infty(\Omega;\R^2)$, and standard
arguments show that $S$ is continuous and compact, observing that the embedding
$H^n(\Omega;\R^2)\hookrightarrow L^\infty(\Omega;\R^2)$ is compact. 
It remains to prove a uniform
bound for all fixed points of $S(\cdot,\eta)$. Let $w\in L^\infty(\Omega;\R^2)$ be such
a fixed point. Then $w$ solves \eqref{ex.LM} with $y$ replaced by $w$. With the test function
$\phi=w$, we find that
\begin{align}
  \frac{\eta}{\tau}\int_\Omega & (u(w)-u(w^{k-1}))\cdot w ~\txtd x 
	+ \int_\Omega \na w:B(w)\na w ~\txtd x \label{ex.aux1} \\
	&{}+ \eps\int_\Omega\bigg(\sum_{|\beta|=n}|D^\beta w|^2 
	+ |w|^2\bigg)~\txtd x= \eta\int_\Omega F(u(w))\cdot w ~\txtd x. \nonumber
\end{align}
Since $h_0''=1/g>0$ on $(0,1)$, $h_0$ is convex. Consequently,
$h_0(x)-h_0(y)\le h_0'(x)(x-y)$ for all $x$, $y\in[0,1]$.
Choosing $x=u(w)$ and $y=u(w^{k-1})$ and using $h_0'(u(w))=w$, this gives
$$
  \frac{\eta}{\tau}\int_\Omega(u(w)-u(w^{k-1}))\cdot w ~\txtd x
	\ge \frac{\eta}{\tau}\int_\Omega\big(h(u(w))-h(u(w^{k-1}))\big)~\txtd x.
$$
Since $u_1=u_1(w)\in(0,1)$ and $f$ is continuous, 
there exists $f_M=\max_{s\in[0,1]}f(s)$ and thus,
$$
  \int_\Omega F(u(w))\cdot w ~\txtd x \le \int_\Omega(f_M-\alpha u_2)u_2 ~\txtd x 
	\le c_f,
$$
where $c_f>0$ only depends on $f_M$ and $\alpha$.
Hence, \eqref{ex.aux1} can be estimated as follows:
\begin{align}\label{ex.aux2}
  \eta\int_\Omega h(u(w))~\txtd x 
	&{}+ \tau\int_\Omega \na w:B(w)\na w ~\txtd x
	+ \eps\tau\int_\Omega\bigg(\sum_{|\beta|=n}|D^\beta w|^2 
	+ |w|^2\bigg)~\txtd x \\
	&\le \eta\tau c_f + \eta\int_\Omega h(u(w^{k-1}))~\txtd x. \nonumber
\end{align}
This yields an $H^n$ bound for $w$ uniform in $\eta$ 
(but not uniform in $\tau$ or $\eps$).
The Leray-Schauder fixed-point theorem shows the existence of a solution 
$w\in H^n(\Omega;\R^2)$
to \eqref{ex.LM} with $y$ replaced by $w$ and with $\eta=1$, 
which is a solution to \eqref{ex.disc}.

{\em Step 2: Uniform bounds.} Let $w^k$ be a solution to \eqref{ex.disc}.
Set $w^{(\tau)}(x,t)=w^k(x)$ and $u^{(\tau)}(x,t)=u(w^k(x))$ for $x\in\Omega$ and
$t\in((k-1)\tau,k\tau]$, $k=1,\ldots,N$. At time $t=0$, we set 
$w^{(\tau)}(\cdot,0)=h_0'(u^0)$
and $u^{(\tau)}(0)=u^0$. We introduce the shift operator 
$(\sigma_\tau u^{(\tau)})(t)
= u(w^{k-1})$ for $t\in((k-1)\tau,k\tau]$, $k=1,\ldots,N$. Then $u^{(\tau)}$ solves
\begin{align}
  \frac{1}{\tau}\int_0^T\int_\Omega & (u^{(\tau)} - \sigma_\tau u^{(\tau)})\cdot\phi ~\txtd x~\txtd t
	+ \int_0^T\int_\Omega\na\phi:B(w^{(\tau)})\na w^{(\tau)} ~\txtd x~\txtd t \label{ex.aux3} \\
	&{}+ \eps\int_0^T\int_\Omega\bigg(\sum_{|\beta|=n}D^\beta w^{(\tau)}\cdot D^\beta\phi
	+ w^{(\tau)}\cdot\phi\bigg)~\txtd x~\txtd t = \int_0^T\int_\Omega F(u^{(\tau)})\cdot\phi ~\txtd x~\txtd t 
	\nonumber
\end{align}
for piecewise constant functions $\phi:(0,T)\to H^n(\Omega;\R^2)$. 
By density, the weak formulation
also holds for all $L^2(0,T;H^n(\Omega;\R^2))$. 

We have shown in Step 1 that the solution $w=w^k$ satisfies the entropy 
estimate \eqref{ex.aux2}. By \eqref{ex.wBw}, we obtain the gradient estimate
$$
  \int_\Omega\na w^k:B(w^k)\na w^k ~\txtd x
	\ge \eps_1(\delta)\min\{\gamma^{-1},\delta_0^{-2}\}\int_\Omega
	(|\na u_1^k|^2 + |\na u_2^k|^2)~\txtd x,
$$
since $g(u_1^k)\le\gamma$. Thus, we obtain from \eqref{ex.aux2} the following 
entropy inequality:
\begin{align}\label{ex.ent}
  \int_\Omega h(u^k)~\txtd x &+ c_0\tau\int_\Omega(|\na u_1^k|^2 + |\na u_2^k|^2)~\txtd x \\
	&+ \eps\tau\int_\Omega\bigg(\sum_{|\beta|=n}|D^\beta w^k|^2 + |w^k|^2\bigg)~\txtd x 
	\le c_f\tau + \int_\Omega h(u^{k-1})~\txtd x, \nonumber
\end{align}
where $c_0=\eps_1(\delta)\min\{\gamma^{-1},\delta_0^{-2}\}$. 
Adding these inequalities leads to
\begin{align*}
  \int_\Omega h(u^k)~\txtd x &+ c_0\tau\sum_{j=1}^k \int_\Omega(|\na u_1^j|^2 
	+ |\na u_2^j|^2)~\txtd x \\
	&+ \eps\tau\sum_{j=1}^k\int_\Omega\bigg(\sum_{|\beta|=n}|D^\beta w^k|^2 
	+ |w^k|^2\bigg)~\txtd x 
	\le c_f k\tau + \int_\Omega h(u^{0})~\txtd x.
\end{align*}
Since
$$
  \int_\Omega h(u^k)~\txtd x = \int_\Omega\left(h_0(u_1^k) 
	+ \frac{(u_2^k)^2}{2\delta_0}\right)~\txtd x
	\ge \frac{1}{2\delta_0}\int_\Omega(u_2^k)^2 ~\txtd x,
$$
the above estimate shows the following uniform bounds:
\begin{align}
  \|u_1^{(\tau)}\|_{L^\infty(0,T;L^\infty(\Omega))}
	+ \|u_2^{(\tau)}\|_{L^\infty(0,T;L^2(\Omega))} &\le c, \label{ex.Linfty} \\
	\|u_1^{(\tau)}\|_{L^2(0,T;H^1(\Omega))} + \|u_2^{(\tau)}\|_{L^2(0,T;H^1(\Omega))} 
	&\le c, 
	\label{ex.H1} \\
	\sqrt{\eps}\|w^{(\tau)}\|_{L^2(0,T;H^n(\Omega))} &\le c, \label{ex.eps}
\end{align}
where $c>0$ denotes here and in the following a constant which is
independent of $\eps$ or $\tau$ (but possibly depending on $T$).

In order to derive a uniform estimate for the discrete time derivative, let
$\phi\in L^2(0,T;$ $H^n(\Omega))$. Then, setting $Q_T=\Omega\times(0,T)$,
\begin{align}
  \frac{1}{\tau}&\left|\int_\tau^T\int_\Omega(u_1^{(\tau)}-\sigma_\tau u_1^{(\tau)})\phi 
	~\txtd x~\txtd t\right| 
	\le \big(\|\na u_1^{(\tau)}\|_{L^2(Q_T)}
	+ \|g(u_1^{(\tau)})\|_{L^\infty(Q_T)}\|\na u_2^{(\tau)}\|_{L^2(Q_T)}\big) \nonumber \\
	&\phantom{xx}{}\times\|\na\phi\|_{L^2(Q_T)} 
	+ \eps\|w_1^{(\tau)}\|_{L^2(0,T;H^n(\Omega))}\|\phi\|_{L^2(0,T;H^n(\Omega))} 
	\nonumber \\
	&\le c\sqrt{\eps}\|\phi\|_{L^2(0,T;H^n(\Omega))} + c\|\phi\|_{L^2(0,T;H^1(\Omega))}, 
	\nonumber \\
	\frac{1}{\tau}&\left|\int_\tau^T\int_\Omega(u_2^{(\tau)}-\sigma_\tau u_2^{(\tau)})\phi 
	~\txtd x~\txtd t\right| 
	\le \big(\delta\|\na u_1^{(\tau)}\|_{L^2(Q_T)}
	+ \kappa\|\na u_2^{(\tau)}\|_{L^2(Q_T)}\big)\|\na\phi\|_{L^2(Q_T)} \label{ex.aux4} \\
	&\phantom{xx}{}+ \eps\|w_1^{(\tau)}\|_{L^2(0,T;H^n(\Omega))}
	\|\phi\|_{L^2(0,T;H^n(\Omega))}
	+ \big(\|f(u_1^{(\tau)})\|_{L^2(Q_T)} + \alpha\|u_2^{(\tau)}\|_{L^2(Q_T)}\big)
	\|\phi\|_{L^2(Q_T)} \nonumber \\
	&\le c\sqrt{\eps}\|\phi\|_{L^2(0,T;H^n(\Omega))} 
	+ c\|\phi\|_{L^2(0,T;H^1(\Omega))}, \nonumber 
\end{align}
which shows that
\begin{equation}\label{ex.tau}
  \tau^{-1}\|u^{(\tau)}-\sigma_\tau u^{(\tau)}\|_{L^2(0,T;(H^n(\Omega))')} \le c.
\end{equation}

{\em Step 3: The limit $(\eps,\tau)\to 0$.} The uniform estimates \eqref{ex.H1} and
\eqref{ex.tau} allow us to apply the discrete Aubin lemma in the version of 
\cite{DreherJuengel},
showing that, up to a subsequence which is not relabelled, as $(\eps,\tau)\to 0$,
\begin{align}
  u^{(\tau)} \to u &\quad\mbox{strongly in }L^2(0,T;L^2(\Omega))
	\mbox{ and a.e. in }Q_T, \label{conv.u} \\
	u^{(\tau)} \rightharpoonup u &\quad\mbox{weakly in }L^2(0,T;H^1(\Omega)), 
	\nonumber \\
	\tau^{-1}(u^{(\tau)}-\sigma_\tau u^{(\tau)}) \rightharpoonup \pa_t u
	&\quad\mbox{weakly in }L^2(0,T;(H^n(\Omega))'), \nonumber \\
	\eps w^{(\tau)} \to 0 &\quad\mbox{strongly in }L^2(0,T;H^n(\Omega)). \nonumber
\end{align}
Because of the $L^\infty$ bound \eqref{ex.Linfty} for $(u_1^{(\tau)})$, we have
$$
  g(u_1^{(\tau)}) \rightharpoonup^* g(u_1), \quad 
	f(u_1^{(\tau)}) \rightharpoonup^* f(u_1) 
	\quad\mbox{weakly* in }L^\infty(0,T;L^\infty(\Omega))
$$
(and even strongly in $L^p(Q_T)$ for any $p<\infty$).
Thus, we can pass to the limit $(\eps,\tau)\to 0$ in \eqref{ex.aux3} to obtain
a solution to
\begin{align*}
  \int_0^T\langle\pa_t u_1,\phi\rangle ~\txtd t 
	+ \int_0^T\int_\Omega(\na u_1-g(u_1)\na u_2)\phi ~\txtd x~\txtd t
	&= 0, \\
	\int_0^T\langle\pa_t u_2,\phi\rangle ~\txtd t 
	+ \int_0^T\int_\Omega(\delta\na u_1+\kappa\na u_2)\phi ~\txtd x~\txtd t
	&= \int_0^T\int_\Omega(f(u_1)-\alpha u_2)\phi ~\txtd x~\txtd t
\end{align*}
for all $\phi\in L^2(0,T;H^n(\Omega))$. In fact, performing the limit $\eps\to 0$ 
and then
$\tau\to 0$, we see from \eqref{ex.aux4} that $\pa_t u\in L^2(0,T;(H^1(\Omega))')$ and
hence, the weak formulation also holds for all $\phi\in L^2(0,T;H^1(\Omega))$.
It contains the no-flux boundary conditions \eqref{1.bic}. 
Moreover, the initial conditions are satisfied
in the sense of $(H^1(\Omega;\R^2))'$; see Step 3 of the proof of Theorem 2 in
\cite{Juengel1}. 
This finishes the proof.

\subsection{Proof of Theorem \ref{thm.decay}}\label{sec.decay}

We recall first the following convex Sobolev inequality which is used to estimate
the gradient terms in the entropy inequality.

\begin{lemma}\label{lem.csi}
Let $\Omega\subset\R^d$ ($d\ge 1$) be a convex domain and let $\phi\in C^4$ be a
convex function such that $1/\phi''$ is concave. Then there exists $c_S>0$
such that for all integrable
functions $u$ with integrable $\phi(u)$ and $\phi''(u)|\na u|^2$,
$$
  \frac{1}{\m(\Omega)}\int_\Omega\phi(u)~\txtd x
	- \phi\bigg(\frac{1}{\m(\Omega)}\int_\Omega u~\txtd x\bigg)
	\le \frac{c_S}{\m(\Omega)}\int_\Omega\phi''(u)|\na u|^2 ~\txtd x,
$$
where $\m(\Omega)$ denotes the measure of $\Omega$.
\end{lemma}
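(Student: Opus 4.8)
The plan is to prove this generalized (convex) Sobolev inequality by the Bakry--\'Emery entropy--entropy-dissipation method, run along the Neumann heat semigroup on the convex domain $\Omega$. Given $u$, let $\rho(t):=\txte^{t\Delta}u$ solve $\pa_t\rho=\Delta\rho$ with $\na\rho\cdot\nu=0$ on $\pa\Omega$ and $\rho(0)=u$; this flow preserves the mean $\bar u:=\m(\Omega)^{-1}\int_\Omega u\,\txtd x$ and, since $\Omega$ is bounded and connected, $\rho(t)\to\bar u$ as $t\to\I$. Set the relative entropy $E(t):=\m(\Omega)^{-1}\int_\Omega\phi(\rho(t))\,\txtd x-\phi(\bar u)$ and the entropy production $I(t):=\m(\Omega)^{-1}\int_\Omega\phi''(\rho(t))|\na\rho(t)|^2\,\txtd x$. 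Convexity of $\phi$ gives $E\ge0$ by Jensen's inequality, and the assertion is exactly $E(0)\le c_S\,I(0)$. I would first prove it for smooth $u$ whose range is compactly contained in the interval on which $\phi$ is defined — the maximum principle confines the range of $\rho(t)$ to that of $u$, so all quantities remain finite — and recover the general integrable case at the end by density and lower-semicontinuity.

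The first step is the standard first-variation identity: differentiating $E$, inserting $\pa_t\rho=\Delta\rho$, and integrating by parts with the Neumann condition gives $E'(t)=-I(t)$. Since $E(\I)=0$, integrating in time yields $E(0)=\int_0^\I I(t)\,\txtd t$. It therefore suffices to establish the exponential decay $I(t)\le I(0)\txte^{-2\lambda_1 t}$, where $\lambda_1>0$ is the first nonzero Neumann eigenvalue on $\Omega$; this produces $c_S=1/(2\lambda_1)$, finite and independent of $u$ because for convex $\Omega$ the Payne--Weinberger estimate gives $\lambda_1\ge\pi^2/\mathrm{diam}(\Omega)^2>0$.

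The heart of the argument, and the step I expect to be the main obstacle, is the second-variation computation showing $-\tfrac12 I'(t)\ge\lambda_1 I(t)$. Differentiating $I$, inserting $\pa_t\rho=\Delta\rho$, integrating by parts and applying the Bochner identity $\na\rho\cdot\na\Delta\rho=\tfrac12\Delta|\na\rho|^2-|\na^2\rho|^2$ produces two contributions. The boundary contribution is a negative multiple of $\int_{\pa\Omega}\phi''(\rho)\,\mathrm{II}(\na_T\rho,\na_T\rho)$, which contributes with the favorable sign to $-\tfrac12 I'$ precisely because $\Omega$ is convex, so its second fundamental form $\mathrm{II}$ is nonnegative and $\na_T$ is the tangential gradient. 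The interior contribution is a pointwise quadratic form in $\na^2\rho$ and $\na\rho\otimes\na\rho$ with coefficients built from $\phi''$, $\phi'''$, $\phi''''$; the hypothesis that $1/\phi''$ is concave, equivalent to $2(\phi''')^2\le\phi''\phi''''$, is exactly what lets the indefinite cross terms be completed into a square, rendering the form nonnegative. Passing to the entropy variable $w=\phi'(\rho)$, so that $\na w=\phi''(\rho)\na\rho$, streamlines this algebra.

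The delicate point is that, since $\Omega$ is flat, this $\Gamma_2$-type computation on its own yields only the monotonicity $I'\le0$; the decay \emph{rate} $\lambda_1$ must be extracted by feeding the global Poincar\'e (spectral-gap) inequality on the convex domain into the Hessian term $\int_\Omega\phi''|\na^2\rho|^2$, converting it into control by $I(t)$ itself. Carrying out this conversion rigorously with a spatially varying weight $\phi''(\rho(x))$, which may be large where $\rho$ approaches the endpoints of its range, is the crux, and is what guarantees a \emph{universal} $c_S$ rather than one depending on the range of $u$. Once $-\tfrac12 I'\ge\lambda_1 I$ is in hand, Gronwall's lemma gives the exponential decay of $I$, hence $E(0)\le I(0)/(2\lambda_1)$; finally I would remove the smoothness and compact-range restrictions by approximation, using parabolic regularity to justify every integration by parts for the regularized data and Fatou's lemma — together with the $L^2$-convergence $\rho(t)\to\bar u$ furnished by the linear Poincar\'e inequality, which secures $E(\I)=0$ — to pass to the limit.
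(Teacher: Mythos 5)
Your overall architecture (entropy--entropy-production along a semigroup, $E'=-I$, $E(0)=\int_0^\infty I(t)\,\txtd t$, exponential decay of $I$) is the right family of ideas and is essentially what the references cited in the paper use. But your specific route has a genuine gap at exactly the step you yourself call ``the crux''. Running the flow with the plain Neumann heat semigroup on a flat convex domain only gives the curvature condition $CD(0,\infty)$: the Bochner/$\Gamma_2$ computation, the concavity of $1/\phi''$, and the nonnegativity of the second fundamental form yield $I'(t)\le 0$ with no rate. To extract the rate $2\lambda_1$ you propose, in effect, the weighted inequality $\int_\Omega\phi''(\rho)|\na^2\rho|^2\,\txtd x\ge\lambda_1\int_\Omega\phi''(\rho)|\na\rho|^2\,\txtd x$, and this is neither proved nor, as far as I can see, correct with the universal constant you need. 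Even for constant weight the spectral decomposition only controls $\int_\Omega(\Delta\rho)^2$, and Reilly's formula on a convex domain with Neumann data gives $\int_\Omega|\na^2\rho|^2\le\int_\Omega(\Delta\rho)^2$ --- the wrong direction --- so passing from the Laplacian to the Hessian already costs at least a dimensional factor; with the spatially varying and possibly unbounded weight $\phi''(\rho(x))$ no weighted spectral gap with constant $\lambda_1$ is available. A further warning sign is that your argument would deliver $c_S=1/(2\lambda_1)$, the optimal Poincar\'e constant, uniformly over all admissible $\phi$ (so in particular the logarithmic Sobolev constant of $\txtd x/\m(\Omega)$ would coincide with its Poincar\'e constant), which is stronger than what the generalized $\Phi$-entropy inequalities assert. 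As written, the decay estimate $I(t)\le I(0)e^{-2\lambda_1 t}$ is unproven and the proof is incomplete.

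The paper sidesteps this difficulty entirely: it invokes Proposition 7.6.1 of Bakry--Gentil--Ledoux for the drifted operator $L=\Delta-x\cdot\na$, which satisfies $CD(1,\infty)$ because $\Gamma_2(u)=\frac12(|\na^2u|^2+|\na u|^2)\ge\Gamma(u)$. With strictly positive curvature the exponential decay of the entropy production comes directly out of the local $\Gamma_2$-calculus, with no separate spectral-gap input, and the boundedness of $\Omega$ allows one to pass between the associated reference measure and $\txtd x/\m(\Omega)$ at the cost of a constant. If you want to keep the zero-curvature Neumann heat semigroup, you must either supply a genuine proof of a weighted Hessian--gradient inequality (with some constant, not necessarily $\lambda_1$) or obtain $c_S$ by a different mechanism, e.g.\ by switching to an operator with a positive curvature bound as the paper does.
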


\begin{proof}
The lemma is a consequence of Prop.~7.6.1 in \cite{BakryGentilLedoux} after choosing
the probability measure $d\mu=\txtd x/\m(\Omega)$ on $\Omega$ and the differential operator
$L=\Delta - x\cdot\na$, which satisfies the curvature condition $C\!D(1,\infty)$
since $\Gamma_2(u)=\frac12(|\na^2 u|^2+|\na u|^2)\ge \frac12|\na u|^2 = \Gamma(u)$.
Another proof can be found in \cite[Section 3.4]{AMTU01}.
\end{proof}

{\em Step 1: Uniform bound for the $L^1$ norm of $u_1^k$.}
The $L^1$ norm of $u_1^k$ is not conserved but we are able to control its $L^1$ norm.
For this, let $w^k\in H^n(\Omega;\R^2)$ be a solution to \eqref{ex.disc} 
and set $u_1^k=u_1(w^k)$.
We introduce the notation $\overline{v}=\m(\Omega)^{-1}\int_\Omega v(x)~\txtd x$ 
for any integrable function $v$. This implies that $u^*_1=\overline{u}_1^0$.
Employing the test function $\phi=(1,0)$ in \eqref{ex.disc}, we find that
$\overline{u}_1^k = \overline{u}_1^{k-1} - \eps\tau \overline{w}_1^k$. 
Solving the recursion gives
$$
  \overline{u}_1^k = \overline{u}^0_1 - \eps\tau\sum_{j=1}^k\overline{w}_1^j
	= u^*_1 - \eps\tau\sum_{j=1}^k\overline{w}_1^j,
$$
and by \eqref{ex.eps}, we conclude that
$$
  |\overline{u}_1^{(\tau)}(t)-u^*_1| \le \eps\|w_1^{(\tau)}\|_{L^1(0,t;L^1(\Omega))}
	\le \sqrt{\eps}c,
$$
where $\overline{u}^{(\tau)}_1(t)=\overline{u}_1^k$ for $t\in((k-1)\tau,k\tau]$.
Consequently, as $(\eps,\tau)\to 0$, the convergence \eqref{conv.u}
shows that $\overline{u}_1(t)=u^*_1$ for $t>0$. 

{\em Step 2: Estimate of the relative entropy.} We employ the test function
$$
  \phi=(h_0'(u_1^k)-h_0'(u^*_1),(u_2^k-u^*_2)/\delta_0)
  =(w_1^k-h_0'(u^*_1),w_2^k-u^*_2/\delta_0)
$$ 
in \eqref{ex.disc} to obtain
\begin{align}
  0 &= \frac{1}{\tau}\int_\Omega \left((u_1^k-u_1^{k-1})
	(h_0'(u_1^k)-h_0'(u^*_1))
	+ \frac{1}{\delta_0}(u_2^k-u_2^{k-1})(u_2^k-u^*_2)\right)~\txtd x \nonumber \\
	&\phantom{xx}{}+ \int_\Omega \na w^k:B(w^k)\na w^k ~\txtd x 
	+ \eps\int_\Omega\bigg(\sum_{|\beta|=n}|D^\beta w^k|^2 
	+ w_1^k(w_1^k-h_0'(u^*_1)) \label{time.aux} \\
	&\phantom{xx}{}+ w_2^k(w_2^k-u^*_2)/\delta_0)\bigg)~\txtd x
	- \frac{1}{\delta_0}\int_\Omega (f(u_1^k)-\alpha u_2^k)(u_2^k-u^*_2)~\txtd x 
	\nonumber \\
	&=: I_1 + \cdots + I_4. \nonumber
\end{align}
For the first integral, we employ the convexity of $h_0$:
\begin{align*}
  (u_1^k-u_1^{k-1})(h_0'(u_1^k)-h_0'(u^*_1))
	&\ge (h_0(u_1^k) - h_0(u_1^{k-1})) - h_0'(u^*_1)(u_1^k-u_1^{k-1}), \\
	(u_2^k-u_2^{k-1})(u_2^k-u^*_2)
	&\ge \frac12\big((u_2^k-u^*_2)^2 - (u_2^{k-1}-u^*_2)^2\big),
\end{align*}
which yields
\begin{align*}
  I_1 &\ge \frac{1}{\tau}\int_\Omega (h_0(u_1^k) - h_0(u_1^{k-1}))~\txtd x
	- \frac{h_0'(u^*_1)}{\tau}\int_\Omega (u_1^k-u_1^{k-1})~\txtd x \\
	&\phantom{xx}{}
	+ \frac{1}{2\delta_0\tau}\int_\Omega\big((u_2^k-u^*_2)^2-(u_2^{k-1}-u^*_2)^2\big)~\txtd x.
\end{align*}
By \eqref{ex.wBw}, it follows that
$$
  I_2 \ge \eps_1(\delta)\int_\Omega\left(\frac{|\na u_1^k|^2}{g(u_1^k)}
	+ \frac{|\na u_2^k|^2}{\delta_0^2}\right)~\txtd x 
	= \eps_1(\delta)\int_\Omega\left(h_0''(u_1^k)|\na u_1^k|^2
	+ \frac{|\na u_2^k|^2}{\delta_0^2}\right)~\txtd x. 
$$
Lemma \ref{lem.csi} then shows that
$$
  I_2 \ge \frac{\eps_1(\delta)}{c_S}\int_\Omega(h_0(u_1^k) 
	- h_0(\overline{u}_1^k))~\txtd x
  + \frac{\eps_1(\delta)}{\delta_0^2}\int_\Omega|\na u_2^k|^2 ~\txtd x. 
$$
The third integral in \eqref{time.aux} is estimated by using Young's inequality:
$$
  I_3 \ge \frac{\eps}{2}\int_\Omega\big((w_1^k)^2 + (w_2^k)^2 - h_0'(u^*_1)^2 
	- \delta_0^{-2}(u^*_2)^2\big)~\txtd x
	\ge -\frac{\eps}{2}\int_\Omega\big(h_0'(u^*_1)^2 
	+ \delta_0^{-2}(u^*_2)^2\big)~\txtd x.
$$

Summarizing these estimates, we infer from \eqref{time.aux} that
\begin{align*}
  &\int_\Omega (h_0(u_1^k)-h_0(u_1^{k-1}))~\txtd x
	- h_0'(u^*_1)\int_\Omega (u_1^k-u_1^{k-1})~\txtd x \\
	&\phantom{xx}{}
	+ \frac{1}{2\delta_0}\int_\Omega\big((u_2^k-u^*_2)^2-(u_2^{k-1}-u^*_2)^2\big)~\txtd x \\
	&\phantom{xx}{}
	+ \frac{\eps_1(\delta)\tau}{c_S}\int_\Omega (h_0(u_1^k)-h_0(\overline{u}_1^k))~\txtd x 
  + \frac{\eps_1(\delta)\tau}{\delta_0^2}\int_\Omega|\na u_2^k|^2 ~\txtd x \\
	&\le \frac{\eps\tau}{2}\int_\Omega\big(h_0'(\overline{u}^k_1)^2 
	+ \delta_0^{-2}(u^*_2)^2\big)~\txtd x
	+ \frac{\tau}{\delta_0}\int_\Omega(f(u_1^k)-\alpha u_2^k)(u_2^k-u^*_2)~\txtd x.
\end{align*}
Adding these equations over $k$ and using the notation as in the proof
of Theorem \ref{thm.ex} for $u_i^{(\tau)}$, we obtain
\begin{align}
  & \int_\Omega(h_0(u_1^{(\tau)}(t)) - h_0(u_1^0))~\txtd x
	- h_0'(u^*_1)\int_\Omega(u_1^{(\tau)}(t)-u_1^0)~\txtd x \nonumber \\
	&\phantom{xx}{}+ \frac{1}{2\delta_0}\int_\Omega\big((u_2^{(\tau)}(t)-u^*_2)^2
	- (u_2^0-u^*_2)^2\big)~\txtd x \label{time.aux2} \\
	&\phantom{xx}{}+ \frac{\eps_1(\delta)}{c_S}\int_0^t\int_\Omega\big(h_0(u_1^{(\tau)})
	- h_0(\overline{u}_1^{(\tau)})\big)~\txtd x ~\txtd s
	+ \frac{\eps_1(\delta)}{\delta_0^2}
	\int_0^t\int_\Omega|\na u_2^{(\tau)}|^2 ~\txtd x~\txtd s \nonumber \\
	&\le \frac{\eps}{2}\int_0^t\int_\Omega\big(h_0'(\overline{u}_1^{(\tau)})^2
	+ \delta_0^{-2}(u^*_2)^2\big)~\txtd x~\txtd s 
	+ \frac{1}{\delta_0}\int_0^t\int_\Omega(f(u_1^{(\tau)})-\alpha u_2^{(\tau)})
	(u_2^{(\tau)}-u^*_2)~\txtd x~\txtd s. \nonumber
\end{align}

{\em Step 3: The limit $(\eps,\tau)\to 0$.}
Because of the $L^\infty$ bound for $(u_1^{(\tau)})$, it follows that, 
for a subsequence,
$u_1^{(\tau)}\rightharpoonup^* u_1$ weakly* in $L^\infty(0,T;L^1(\Omega))$ 
and thus, as $(\eps,\tau)\to 0$,
$$
  \int_\Omega(u_1^{(\tau)}(t)-u_1^0)~\txtd x = \int_\Omega(u_1^{(\tau)}(t)-u^*_1)~\txtd x
	\to \int_\Omega(u_1(t)-u^*_1)~\txtd x = 0,
$$
since $\overline{u}_1(t)=u^*_1$ for $t>0$, by Step 1.
The weak convergence of $(\na u_2^{(\tau)})$ to $\na u_2$ in 
$L^2(0,T;L^2(\Omega))$ implies that
$$
  \liminf_{\tau\to 0}\int_0^t\int_\Omega|\na u_2^{(\tau)}|^2 ~\txtd x ~\txtd s
	\le \int_0^t\int_\Omega|\na u_2|^2 ~\txtd x ~\txtd s.
$$
Furthermore, by the strong convergence $u_1^{(\tau)}\to u_1$ in $L^2(0,T;L^2(\Omega))$,
up to a subsequence, $u_1^{(\tau)}\to u_1$ a.e.\ in $Q_T=\Omega\times(0,T)$ and
$h_0(u_1^{(\tau)})\to h_0(u_1)$ a.e.\ in $Q_T$. Then the 
$L^\infty$ bound of $(u_1^{(\tau)})$ implies that
$h_0(u_1^{(\tau)})\to h_0(u_1)$ strongly in $L^p(0,T;L^p(\Omega))$ for any $p<\infty$.
Furthermore, we know that $u_2^{(\tau)}\to u_2$ strongly in $L^2(0,T;L^2(\Omega))$,
see \eqref{conv.u}.
Therefore, the limit $(\eps,\tau)\to 0$ in \eqref{time.aux2} leads to
\begin{align*}
  & \int_\Omega(h_0(u_1(t)) - h_0(u_1^0))~\txtd x
	+ \frac{1}{2\delta_0}\int_\Omega\big((u_2(t)-u^*_2)^2
	- (u_2^0-u^*_2)^2\big)~\txtd x \\
	&\phantom{xx}{}+ \frac{\eps_1(\delta)}{c_S}\int_0^t\int_\Omega\big(h_0(u_1)
	- h_0(u^*_1)\big)~\txtd x ~\txtd s
	+ \frac{\eps_1(\delta)}{\delta_0^2}\int_0^t\int_\Omega|\na u_2|^2 ~\txtd x~\txtd s \nonumber \\
	&\le \frac{1}{\delta_0}\int_0^t\int_\Omega(f(u_1)-\alpha u_2)(u_2-u^*_2)~\txtd x~\txtd s.
\end{align*}

Now, we estimate the right-hand side.
Because of $f(u^*_1)=\alpha u^*_2$ and the Lipschitz continuity of $f$ with
Lipschitz constant $c_L>0$, we infer that (recall \eqref{1.rel.ent} for the definition
of $h_0(u_1|u^*_1)$)
\begin{align*}
  &\int_\Omega\big(h_0(u_1(t)|u^*_1)~\txtd x - h_0(u_1(0)|u^*_1)\big)~\txtd x
	+ \frac{1}{2\delta_0}\int_\Omega\big((u_2(t)-u^*_2)^2 - (u_2(0)-u^*_2)^2\big)~\txtd x \\
	&\phantom{xx}{}+ \frac{\eps_1(\delta)}{c_S}\int_0^t\int_\Omega 
	h_0(u_1(s)|u^*_1)~\txtd x~\txtd s \\
	&\le \frac{1}{\delta_0}\int_0^t\int_\Omega(f(u_1)-f(u^*_1))(u_2-u^*_2)~\txtd x~\txtd s
	- \frac{\alpha}{\delta_0}\int_0^t\int_\Omega(u_2-u^*_2)^2 ~\txtd x~\txtd s \\
	&\le \frac{1}{2\delta_0\alpha}\int_0^t\int_\Omega(f(u_1)-f(u^*_1))^2 ~\txtd x~\txtd s 
	- \frac{\alpha}{2\delta_0}\int_0^t\int_\Omega(u_2-u^*_2)^2 ~\txtd x~\txtd s\\
	&\le \frac{c_L^2}{2\alpha\delta_0}\int_0^t\int_\Omega(u_1-u^*_1)^2 ~\txtd x~\txtd s
	- \frac{\alpha}{2\delta_0}\int_0^t\int_\Omega(u_2-u^*_2)^2 ~\txtd x~\txtd s.
\end{align*}
Since $\overline{u}_1=u^*_1$, a Taylor expansion and the assumption 
$1/h_0''(u_1)=g(u_1)\le\gamma$ give
\begin{align}
  \int_0^t\int_\Omega h_0(u_1|u^*_1)~\txtd x~\txtd s
  &= \int_0^t\int_\Omega(h_0(u_1)-h_0(u^*_1)~\txtd x~\txtd s \nonumber \\
	&= \int_0^t\int_\Omega\left(h_0'(u^*_1)(u_1-u^*_1) 
	+ \frac12h_0''(\xi)(u_1-u^*_1)^2\right)~\txtd x~\txtd s  \label{time.h0} \\
	&\ge \frac{1}{2\gamma}\int_0^t\int_\Omega (u_1-u^*_1)^2 ~\txtd x~\txtd s, \nonumber
\end{align}
where $\xi$ is a number between $u_1$ and $u_1^*$. We conclude that
\begin{align*}
  &\int_\Omega h_0(u_1(t)|u^*_1)~\txtd x
	+ \frac{1}{2\delta_0}\int_\Omega(u_2(t)-u^*_2)^2 ~\txtd x 
	+ \left(\frac{\eps_1(\delta)}{c_S}-\frac{\gamma c_L^2}{\alpha\delta_0}\right)
	\int_0^t\int_\Omega h_0(u_1(s)|u^*_1)~\txtd x~\txtd s \\
	&\phantom{xx}{}
	+ \frac{\alpha}{2\delta_0}\int_0^t\int_\Omega(u_2-u^*_2)^2 ~\txtd x~\txtd s 
	\le \int_\Omega h_0(u_1(0)|u^*_1)~\txtd x 
	+ \frac{1}{2\delta_0}\int_\Omega(u_2(0)-u^*_2)^2~\txtd x,
\end{align*}
and recalling the notation $h(u|U)=h_0(u_1|u^*_1)+(u_2-u^*_2)^2/(2\delta_0)$,
$$
  \int_\Omega h(u(t)|U)~\txtd x 
	+ \min\left\{\frac{\eps_1(\delta)}{c_S}-\frac{\gamma c_L^2}{\alpha\delta_0},
	\alpha\right\}\int_0^t h(u|U)~\txtd s
	\le \int_\Omega h(u(0)|U)~\txtd x.
$$
Then Gronwall's lemma implies that
$$
  H(u(t)|U) = \int_\Omega h(u(t)|U)~\txtd x \le e^{-{\chi}(\delta) t}H(u(0)|U), \quad t\ge 0,
$$
where ${\chi}(\delta)$ is defined in \eqref{1.mu}.
Finally, taking into account \eqref{time.h0}, we estimate
$$
  h(u|U) \ge \frac{1}{2\gamma}(u_1-u^*_1)^2 + \frac{1}{2\delta}(u_2-u^*_2)^2,
$$
which shows \eqref{1.L2decay} and finishes the proof.

\section{Analytical Bifurcation Analysis -- Proofs}\label{sec.bifurcation.res}

In this section, we are going to prove Theorem \ref{thm:main_ana_bif}. The proofs follow 
closely ideas presented for similar systems in \cite{ChertockKurganovWangWu,ShiWang,WangXu}, 
which are fundamentally based upon an application of results of Crandall and Rabinowitz 
\cite{CrandallRabinowitz,CrandallRabinowitz1}; see also \cite{Kielhoefer} for a detailed 
exposition of the these results. Recall that we defined the spaces $\cX$, $\cY$, $\cY_0$
in \eqref{eq:space_ba} and the mapping 
$$
\cF:\cX\times \cX\times \R\ra \cY_0\times \cY\times \R
$$ 
in \eqref{F}. A first step is to investigate the Fredholm and differentiability properties 
of $\cF$.

\begin{lemma}
 \label{propF}
 The mapping $\cF$ satisfies the following properties:
 \begin{itemize}
 \item[(L1)] $\cF(u^*,\delta)=0$ for all $\delta\in\R$.
 \item[(L2)] $\cF(u_1,u_2,\delta)=0$ implies that $(u_1,u_2)$ solves \eqref{eq:steady}.
 \item[(L3)] $\cF$ is $C^1$-smooth with Fr\'echet derivative $\txtD_u\cF$ given by \eqref{eq:DF}.
 \item[(L4)] If $\tilde{u}(x)\equiv (\tilde{u}_1,\tilde{u}_2)$ is a homogeneous state and 
 $\delta g(\tilde{u}_{1})\neq -\kappa$ then $\txtD_u\cF(\tilde{u}_1,\tilde{u}_2,\delta)$ 
 is a Fredholm operator with index zero.
 \end{itemize}
\end{lemma}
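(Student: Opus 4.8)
The plan is to verify the four properties largely by direct computation, reserving the real work for the Fredholm claim (L4). For (L1), I would simply observe that a homogeneous state $u^*=(u_1^*,u_2^*)$ has vanishing gradients, so the divergence terms in the first two components of $\cF$ vanish; the reaction balance $f(u_1^*)-\alpha u_2^*=0$ holds by the very definition $u_2^*=f(u_1^*)/\alpha$; and the third (mass) component vanishes because $\int_\Omega u_1^*\,\txtd x=\m(\Omega)u_1^*$. For (L2), I would note that $\cF(u_1,u_2,\delta)=0$ forces the first two components to vanish, which are exactly the two stationary equations \eqref{eq:steady}, while the remaining components only encode the boundary conditions (already built into $\cX$) and the mass normalization; hence any zero of $\cF$ solves the steady-state problem. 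For (L3), the map is a composition of linear differential operators with the smooth Nemytskii operators induced by $f,g\in C^2$; since $W^{2,p}\hookrightarrow C^1(\bar\Omega)$ for $p>d$, these superposition operators are $C^1$ on $\cX$, and differentiating termwise yields exactly \eqref{eq:DF}. The only point needing care is that the image genuinely lands in $\cY_0\times\cY\times\R$: the first component is a divergence, so its integral over $\Omega$ vanishes by the no-flux structure, placing it in $\cY_0$.

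The substantive part is (L4), the Fredholm-index-zero claim. At a homogeneous state $\tilde u$ the linearization \eqref{eq:DF} simplifies dramatically because $\nabla\tilde u_2=0$ kills the term $g'(\tilde u_1)(\nabla\tilde u_2)U_1$, leaving a constant-coefficient principal part
\begin{equation*}
\cA_\delta(\tilde u)\begin{pmatrix}U_1\\U_2\end{pmatrix}
=\begin{pmatrix}
\Delta U_1 - g(\tilde u_1)\Delta U_2\\
\delta\Delta U_1+\kappa\Delta U_2-\alpha U_2+f'(\tilde u_1)U_1\\
\int_\Omega U_1\,\txtd x
\end{pmatrix}.
\end{equation*}
The principal part of the $2\times 2$ differential system is $M\Delta(U_1,U_2)^\top$ with
\begin{equation*}
M=\begin{pmatrix}1 & -g(\tilde u_1)\\ \delta & \kappa\end{pmatrix},\qquad \det M=\kappa+\delta g(\tilde u_1),
\end{equation*}
and the hypothesis $\delta g(\tilde u_1)\neq-\kappa$ is precisely the statement $\det M\neq 0$. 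The plan is to invoke this invertibility to conclude that the system is an elliptic (Agmon--Douglis--Nirenberg) system, so that $M\Delta$ is a Fredholm operator of index zero from $\cX\times\cX$ into $\cY\times\cY$, under the Neumann conditions encoded in $\cX$; I would cite standard elliptic theory for this.

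The remaining issue is to absorb the lower-order terms and the two extra scalar components (the mass constraint $\int_\Omega U_1$ and the boundary/second equation bookkeeping) without changing the index. The main obstacle I anticipate is handling the mixed target space $\cY_0\times\cY\times\R$ correctly: the first output is constrained to have zero mean, and one finite-dimensional functional is added, so I would argue that the index contributions of the codimension-one constraint on the first factor and the added $\R$-factor cancel, leaving index zero. Concretely, I would write $\txtD_u\cF=\cL_0+\cK$, where $\cL_0$ is the invertible-principal-part elliptic operator (Fredholm of index zero by the $\det M\neq0$ ellipticity) and $\cK$ collects the zeroth-order multiplication operators $-\alpha U_2$, $f'(\tilde u_1)U_1$ together with the integral functional $U_1\mapsto\int_\Omega U_1$. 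By the compact embedding $W^{2,p}\hookrightarrow\hookrightarrow L^p$ (Rellich--Kondrachov), $\cK$ is a compact perturbation, and since the Fredholm index is stable under compact perturbations, $\txtD_u\cF$ is Fredholm of index zero. I would double-check that the zero-mean restriction on the first target component is compatible with the divergence structure of the first equation so that the operator indeed maps onto the stated spaces, which is the one bookkeeping step where the functional-analytic setup must be matched exactly to \eqref{eq:space_ba}.
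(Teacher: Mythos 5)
Your proposal is correct and follows essentially the same route as the paper: (L1)--(L3) by direct computation, and for (L4) a splitting of $\txtD_u\cF$ into an elliptic block that is Fredholm of index zero precisely because $\det M=\kappa+\delta g(\tilde u_1)\neq 0$, plus a compact perturbation, with the codimension-one loss from the $\cY_0$-constraint on the first target component cancelled by the appended $\R$-factor. The only cosmetic differences are that the paper keeps the zeroth-order terms inside the elliptic block and verifies Petrovskii ellipticity together with Agmon's condition so as to invoke a Fredholm theorem of Shi--Wang, whereas you move them into the compact perturbation and appeal directly to constant-coefficient elliptic theory, which is legitimate at a homogeneous state.
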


\begin{proof}
For (L1) recall that $u^*=(u_1^*,u_2^*)$ was the notation for a homogeneous steady state.
Regarding (L2), observe that the first two components of $\cF$ are just the steady state
equations \eqref{eq:steady}. Statement (L3) follows from a direct calculation. The problem is to
show (L4). We follow the argument given in \cite{ChertockKurganovWangWu,WangXu} and consider
\begin{equation}
\label{DF2}
\txtD_{u}\mathcal{F}(\tilde{u}_1,\tilde{u}_2,\delta)(U_1,U_2)^\top
=\cB_1(U_1,U_2)^\top+\cB_2(U_1,U_2)^\top,
\end{equation}
where $\cB_1:\mathcal{X}\times\mathcal{X}\ra\mathcal{Y}_0
\times\mathcal{Y}\times\R$
is defined by
\begin{equation}
\label{F1}
	\cB_1 \begin{pmatrix} U_1\\ U_2 \end{pmatrix}=
	\begin{pmatrix}
	\Delta U_1-\diver[ g'(\tilde{u}_1)(\nabla \tilde{u}_2)U_1+g(\tilde{u}_1)\nabla U_2] \\
	\delta\Delta U_1+\kappa\Delta U_2-\alpha U_2+f'(\tilde{u}_1)U_1 \\
	0\\
	\end{pmatrix},
\end{equation}
and the mapping $\cB_2:\mathcal{X}\times\mathcal{X}\ra\mathcal{Y}_0
\times\mathcal{Y}\times\R$ is given by
\begin{equation}
\label{F2}
	\cB_2 \begin{pmatrix} U_1 \\ U_2 \\ \end{pmatrix}=
	\begin{pmatrix}
	0\\
	0 \\
	\int_\Omega U_1(x)~\txtd x\\
	\end{pmatrix}.
\end{equation}
We observe easily that $\mathcal{B}_2:\mathcal{X}\times\mathcal{X}
\ra\mathcal{Y}_0\times\mathcal{Y}\times\R$ is linear and compact.
We need an ellipticity condition and $\cB_1$ should satisfy Agmon's condition~\cite{ShiWang}. 
We have ellipticity for $\cB_1$ (in the sense of Petrovskii~\cite{Jan98,ShiWang}) if
\be
\label{eq:maindet}
\det\left[
\begin{pmatrix}
       1 & -g(\tilde{u}_1)\\
       \delta & \kappa
       \end{pmatrix}\xi\cdot\xi\right]
\neq 0,
\ee
for all $\xi=(\xi_1,\xi_2,\ldots,\xi_d)\in\R^{d}\backslash\{0\}$.
Computing the determinant this condition just yields
\benn
0\neq (\xi_1^2+\dots+\xi_d^2)(\kappa+\delta g(\tilde{u}_1))\quad 
\text{if and only if}\quad -\kappa\neq\delta g(\tilde{u}_1)
\eenn
and ellipticity in the sense of Petrovskii follows. Moreover we need to verify 
Agmon's condition at a fixed angle $\theta\in[-\pi,\pi)$. 
Using~\cite[Remark~2.5]{ShiWang} with $\theta=\pi/2$, one verifies computing 
a shifted determinant similar to the previously computed one in \eqref{eq:maindet} 
that Agmon's condition holds for all values of $\kappa$. In particular, the 
ellipticity condition gives a restriction on the parameters for the bifurcation 
analysis and not Agmon's condition. By applying~\cite[Thm.~3.3]{ShiWang} we infer that 
\benn
\cB_1:\mathcal{X}\times\mathcal{X}\ra\mathcal{Y}
\times\mathcal{Y}\times\{0\}
\eenn
is a Fredholm operator of index zero. Hence
$\mathcal{Y}_0\times\mathcal{Y}\times\{0\}=\mathcal{R}(\cB_1)\oplus W$,
where $\mathcal{R}(\cB_1)$ is the range of $\cB_1$ and $W$ is a closed subspace of 
$\mathcal{Y}\times\mathcal{Y}\times\R$ with $\dim W=\dim \mathcal{N}(\cB_1)<\infty$.
Consequently, since the first component of $\cB_1$ is in $\mathcal{Y}_0$, we have
\benn
\mathcal{Y}_0\times\mathcal{Y}\times\R = 
\mathcal{R}(\cB_1)\oplus W_0\oplus\text{span}\{(0,0,1)^\top\}
\eenn
where $W_0=\{(H_1,H_2,H_3)\in W|\int_0^LH_1(x)dx=0\}$ and $W=W_0+\text{span}\{(1,0,0)\}$. 
Then $\dim W=\dim W_0+1$.
Thus the codimension of $\mathcal{R}(\cB_1)$ in $\mathcal{Y}_0\times\mathcal{Y}\times\R$
is equal to $\dim W=\dim \mathcal{N}(\cB_1)$. Hence, $\cB_1:\cX\times \cX 
\ra \cY_0\times \cY\times \R$ is a Fredholm operator of index zero 
for $\delta g(\tilde{u}_{1})\neq -\kappa$. Therefore, $\txtD_u\mathcal{F}$ is a Fredholm
operator of index zero as $\cB_2$ is a compact perturbation. Hence, the result (R1) in 
Theorem~\ref{thm:main_ana_bif} follows.
\end{proof}

It seems difficult to improve the result to include the degenerate cases when 
$\kappa=-\delta g(u_1^*)$ as this would require to deal with bifurcation problems with
non-elliptic operators. The next goal is to apply~\cite[Thm.~4.3]{ShiWang}. To do so, we 
need some additional 
properties of $\mathcal{F}$. In particular, in order that bifurcations occur from the 
homogeneous steady state  $u^*=(u_1^*,u_2^*)$  we need that the implicit function theorem 
fails. For the following lemma we need to be in the case where each eigenvalue $\mu_n$ of 
the negative Neumann Laplacian on $\Omega$ eigenvalue is simple. For the one-dimensional case this 
always holds, while for generic $d$-dimensional domains the eigenvalues are also 
simple~\cite{Uhlenbeck}.

\begin{lemma}
\label{cond_RC}
 Suppose the eigenvalues of the negative Neumann Laplacian on $\Omega\subset \R^d$ are simple and 
 $\delta g(u_1^*)\neq -\kappa$. Then there exist bifurcation points at $\delta=
 \delta^n_{\textnormal{b}}$ such that the map $\mathcal{F}$ satisfies the following properties:
 \begin{itemize}
  \item[(L5)] the null space $\mathcal{N}[\txtD_u\mathcal{F}(u^*,\delta^n_{\textnormal{b}})]$ 
  is one-dimensional, i.e., $\textnormal{span}[e^n_{\textnormal{b}}]=
  \mathcal{N}[\txtD_u\mathcal{F}(u^*,\delta^n_{\textnormal{b}})]$;
  \item[(L6)] the non-degenerate crossing condition holds, i.e.,
  \begin{equation}
\label{eq:cross_cond}
 \txtD_{\delta u}\mathcal{F}(u^*,\delta^n_{\textnormal{b}})e^n_{\textnormal{b}}\notin 
 \cR[\txtD_u\mathcal{F}(u^*,\delta^n_{\textnormal{b}})].
\end{equation}
 \end{itemize}
\end{lemma}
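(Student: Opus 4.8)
The plan is to diagonalize the linearized operator $\cA_\delta(u^*)=\txtD_u\mathcal{F}(u^*,\delta)$ against the Neumann spectrum and to read off (L5) and (L6) mode by mode. Since $u^*=(u_1^*,u_2^*)$ is spatially constant we have $\na u_2^*=0$ and $g(\tilde u_1)\equiv g(u_1^*)$, so \eqref{eq:DF} collapses to
\be
\label{eq:Asimpl}
\cA_\delta(u^*)\begin{pmatrix}U_1\\U_2\end{pmatrix}
=\begin{pmatrix}
\Delta U_1-g(u_1^*)\Delta U_2\\
\delta\Delta U_1+\kappa\Delta U_2-\alpha U_2+f'(u_1^*)U_1\\
\int_\Omega U_1~\txtd x
\end{pmatrix}.
\ee
Let $\{\phi_n\}_{n\ge 0}$ be an $L^2$-orthonormal basis of Neumann eigenfunctions, $-\Delta\phi_n=\mu_n\phi_n$ with $\na\phi_n\cdot\nu=0$ on $\pa\Omega$ and $0=\mu_0<\mu_1<\mu_2<\cdots$ (strict inequalities, since all $\mu_n$ are simple); the constant mode is $\phi_0$.

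For (L5), I would write $U_1=\sum_n a_n\phi_n$, $U_2=\sum_n b_n\phi_n$ and impose $\cA_{\delta}(u^*)(U_1,U_2)^\top=0$. The third (mass) component forces $a_0=0$, and for $n\ge 1$ the first component gives $a_n=g(u_1^*)b_n$. Substituting this into the second component yields the scalar relation
\benn
\big[-\mu_n\big(\delta g(u_1^*)+\kappa\big)+f'(u_1^*)g(u_1^*)-\alpha\big]\,b_n=0,
\eenn
whose bracket vanishes exactly at $\delta=\delta^n_{\txtb}=\delta_{\txtd}+\mu_n^{-1}[f'(u_1^*)-\alpha/g(u_1^*)]$, recovering the value announced in (M5). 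Hence at $\delta=\delta^n_{\txtb}$ the kernel contains $e^n_{\txtb}:=(g(u_1^*)\phi_n,\phi_n)^\top$. To see the kernel is exactly one-dimensional, note that for $m\ne n$ the analogous bracket is nonzero at $\delta=\delta^n_{\txtb}$: because the $\mu_m$ are distinct and $f'(u_1^*)-\alpha/g(u_1^*)\ne 0$ (the excluded degenerate case in which every $\delta^m_{\txtb}$ would collapse onto $\delta_{\txtd}$), the map $m\mapsto\delta^m_{\txtb}$ is injective, so $b_m=0$ and $a_m=0$ for all $m\ne n$; simplicity of $\mu_n$ then fixes $\phi_n$ up to scaling. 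This gives $\cN[\txtD_u\mathcal{F}(u^*,\delta^n_{\txtb})]=\textnormal{span}\{e^n_{\txtb}\}$.

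For (L6), the only $\delta$-dependence in \eqref{eq:Asimpl} is the term $\delta\Delta U_1$, so $\txtD_{\delta u}\mathcal{F}(u^*,\delta)(U_1,U_2)^\top=(0,\Delta U_1,0)^\top$, and evaluating on $e^n_{\txtb}$ gives $(0,-g(u_1^*)\mu_n\phi_n,0)^\top$. I would then verify this lies outside $\cR[\txtD_u\mathcal{F}(u^*,\delta^n_{\txtb})]$ by projecting the solvability equations $\cA_{\delta^n_{\txtb}}(u^*)(U_1,U_2)^\top=(0,-g(u_1^*)\mu_n\phi_n,0)^\top$ onto the $n$-th mode. Exactly as above, the first component gives $a_n=g(u_1^*)b_n$, and substituting into the second component yields $[-\mu_n(\delta^n_{\txtb}g(u_1^*)+\kappa)+f'(u_1^*)g(u_1^*)-\alpha]\,b_n=-g(u_1^*)\mu_n$; the bracket is precisely the quantity that vanishes at $\delta=\delta^n_{\txtb}$, so the $n$-th mode equation degenerates to $0=-g(u_1^*)\mu_n$. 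This is impossible, since $g(u_1^*)>0$ (because $u_1^*\in(0,1)$) and $\mu_n>0$ for $n\ge 1$; hence the transversality condition \eqref{eq:cross_cond} holds.

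The step demanding the most care is the one-dimensionality in (L5): simplicity of $\mu_n$ alone is not enough, since one must also exclude accidental resonances $\delta^m_{\txtb}=\delta^n_{\txtb}$ with $m\ne n$. This is exactly where genericity of the domain enters, through the separation $\mu_1<\mu_2<\cdots$ of the Neumann spectrum (guaranteed for generic $\Omega$ by \cite{Uhlenbeck}), together with the nondegeneracy $f'(u_1^*)\ne\alpha/g(u_1^*)$. Once the spectral decomposition \eqref{eq:Asimpl} is in place, the crossing condition (L6) becomes comparatively transparent: the very same vanishing bracket that produces the one-dimensional kernel is what obstructs solvability of the range equation in the resonant mode.
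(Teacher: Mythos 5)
Your proposal is correct and follows essentially the same route as the paper: project the linearization onto the Neumann eigenmodes, locate $\delta^n_{\txtb}$ as the values where the resulting $2\times 2$ algebraic system degenerates, and obtain (L6) from the unsolvability of the projected inhomogeneous system in the resonant mode. The only organizational difference is that the paper closes the one-dimensionality of the kernel by diagonalizing the $2\times 2$ coefficient matrix and decoupling the stationary system into a harmonic equation plus a single Neumann eigenvalue problem, whereas you expand in the full eigenbasis and exclude the other modes via injectivity of $m\mapsto\delta^m_{\txtb}$ --- equivalent under the stated simplicity and non-degeneracy hypotheses.
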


\begin{proof}
We start by proving (L5). By~\eqref{F1}, the null space of $\txtD_u\mathcal{F}(u^*,\delta)$ consists 
of solutions for
\begin{equation}
\label{eq:null}
\begin{aligned}
  \Delta U_1-g(u_1^*)\Delta U_2 &=0, \\
	\delta\Delta U_1+\kappa\Delta U_2-\alpha U_2+f'(u_1^*)U_1 &=0, \\
	\int_\Omega U_1(x)~\txtd x &=0,
\end{aligned}
\end{equation}
with no-flux conditions on $\partial\Omega$. For 
any pair $u=(u_1,u_2)\in\mathcal{X}\times\mathcal{X}$, we can expand $u_1$ and $u_2$ as 
a series of mutually orthogonal eigenfunctions of the following system
\begin{equation}
\label{eq:lapl}
	\begin{split}
	\left\{
	\begin{array}{rcll}
	\displaystyle
	-\Delta u&=&\mu u& \qquad \text{in}\quad\Omega, \\
	\frac{\partial u}{\partial\nu}&=&0 &\qquad \text{on}\quad\partial\Omega,
	\end{array}
	\right.
	\end{split}
\end{equation}
multiplied by constants vectors. Let $\mu_n>0$ be a simple eigenvalue of~\eqref{eq:lapl} and 
$e_{\mu_n}$ is the eigenfunction corresponding to $\mu_n$ normalized by 
$\int_\Omega (e_{\mu_n})^2 ~\txtd x=1$. Then we define
\benn
\bar{U}_1:= \int_\Omega u_1(x)e_{\mu_n}(x)~\txtd x,\qquad 
\bar{U}_2:=\int_\Omega u_2(x)e_{\mu_n}(x)~\txtd x.
\eenn
We obtain
\begin{equation}
\label{UV}
\begin{aligned}
\int_\Omega e_{\mu_n}\Delta u_1 ~\txtd x=-\mu_n\int_\Omega u_1 e_{\mu_n} ~\txtd x
=-\mu_n \bar{U}_1,\\
\int_\Omega e_{\mu_n}\Delta u_2~ \txtd x
=-\mu_n\int_\Omega u_2 e_{\mu_n}~ \txtd x=-\mu_n \bar{U}_2.
\end{aligned}
\end{equation}
Now, by multiplying the first two equations of~\eqref{eq:null} by $e_{\mu_n}$ and 
integrating over $\Omega$, using the boundary condition and~\eqref{UV}, we arrive at 
the following algebraic system for $\bar{U}_1$ and $\bar{U}_2$:
\begin{equation}
\label{eq:alg}
\begin{aligned}
  \bar{U}_1-g(u_1^*)\bar{U}_2 &=0, \\
	(\kappa\mu_n+\alpha)\bar{U}_2 -(f'(u_1^*)-\delta\mu_n)\bar{U}_1 &=0.
\end{aligned}
\end{equation}
If $\delta>f'(u_1^*)/\mu_n$ then the system~\eqref{eq:alg} has only the zero solution.
In this case, we would have $\mathcal{N}[\txtD_u\mathcal{F}(u^*,\delta)]=0$ for all $\delta$.
In order to have existence of a non-homogeneous solution we necessarily require 
$\delta\leq f'(u_1^*)/\mu_n$. In this case the system~\eqref{eq:alg} has a non-zero 
solution if and only if
\begin{equation}
\label{deltab}
\delta=:\delta^n_{\textnormal{b}}=-\frac{\kappa}{g(u_1^*)}+\frac{1}{\mu_n}
\Bigl[f'(u_1^*)-\frac{\alpha}{g(u_1^*)}\Bigr]=
\delta_{\textnormal{d}}+\frac{1}{\mu_n}\Bigl[f'(u_1^*)-\frac{\alpha}{g(u_1^*)}\Bigr].
\end{equation}
Taking $\delta=\delta^n_{\textnormal{b}}$, we can rewrite the first two equations 
of~\eqref{eq:null} as the system:
\begin{equation}
\label{A}
	\begin{pmatrix}
	\Delta U_1\\
	\Delta U_2
	\end{pmatrix}
	=\frac{1}{\kappa+\delta^n_{\textnormal{b}}g(u_1^*)}
	\begin{pmatrix}
	-g(u_1^*)f'(u_1^*) & g(u_1^*)\alpha \\
	-f'(u_1^*) & \alpha
	\end{pmatrix}
	\begin{pmatrix}
	U_1\\
	U_2
	\end{pmatrix}=:A
	\begin{pmatrix}
	U_1\\
	U_2
	\end{pmatrix}
\end{equation}
Using~\eqref{deltab} and computing the determinant and the trace of the 
matrix $A$ we find that its eigenvalues are $\lambda_1=0$ and $\lambda_2=-\mu_n$, 
where $\mu_n>0$ is a single eigenvalue of the problem~\eqref{eq:lapl}. Let $T$ 
be the matrix whose columns are the eigenvectors corresponding to $\lambda_1$ 
and $\lambda_2$ respectively:
\benn
T=
\begin{pmatrix}
\alpha & g(u_1^*) \\
f'(u_1^*) & 1
\end{pmatrix}.
\eenn
We have
\benn
T^{-1}AT=
\begin{pmatrix}
0 & 0 \\
0 & \mu_n
\end{pmatrix}.
\eenn
Then, by considering the transformation
\begin{equation}
\label{uvpq}
\begin{pmatrix}
	p\\
	q
	\end{pmatrix}=T^{-1}\begin{pmatrix}
	U_1\\
	U_2
	\end{pmatrix},
\end{equation}
it follows that the first two equations of~\eqref{eq:null} can be uncoupled 
and we find that
\begin{equation}
\label{eq:pq}
\begin{aligned}
  &\Delta p=0\qquad \text{~in }\Omega,\\
	&\Delta q=\mu_n q\qquad \text{in }\Omega,\\
	&\alpha\int_{\Omega}p(x)~\txtd x+ g(u_1^*)\int_{\Omega}q(x)~\txtd x=0,\\
	&\nabla p\cdot\nu=\nabla q\cdot\nu=0\qquad \text{on }\partial\Omega,
\end{aligned}
\end{equation}
where the genericity condition $-\kappa\neq \delta^n_ {\textnormal{b}} g(u_1^*)$ is used to 
obtain zero Neumann boundary conditions. Recall that $\mu_n$ is a simple eigenvalue of~\eqref{eq:lapl} 
with eigenfunction $e_{\mu_n}$. Observe that $\int_\Omega e_{\mu_n}(x)~ \txtd x=0$, which implies
that $p=0$ and $q=Ce_{\mu_n}$ for some constant $C$ are the solutions of \eqref{eq:pq}. Therefore,
it follows that
\begin{equation}
\label{span}
(U_1,U_2)^\top=C e_{\mu_n} (g(u_1^*),1)^\top .
\end{equation}
This shows that $\mathcal{N}[\txtD_u\mathcal{F}(u^*,\delta^n_ {\textnormal{b}})]= 
\textnormal{span}[e_{\mu_n} (g(u_1^*),1)^\top ]=:\text{span}[e^n_{\textnormal{b}}]$. In 
particular, the nullspace is one-dimensional and the result (L5) follows. 

To prove (L6), we argue by contradiction and suppose that~\eqref{eq:cross_cond} is not 
satisfied. Hence, by computing $\txtD_{\delta u}\cF(u^*,\delta^n_{\textnormal{b}})$, it 
follows there exists $(p,q)$ such that
\begin{equation}
\label{pq}
\begin{aligned}
  \Delta p-g(u_1^*)\Delta q =\mu_n g(u_1^*) e_{\mu_n}\quad & \text{in }\Omega,\\
	\kappa\Delta q +\delta^n_{\textnormal{b}}\Delta p -\alpha q+f'(u_1^*)p=0
	\quad &\text{in }\Omega,\\
	\int_{\Omega}p(x)~\txtd x=0,&\\
	\nabla p\cdot\nu = \nabla q\cdot\nu = 0\quad &\text{on }\partial\Omega.
\end{aligned}
\end{equation}
As in the first part of the proof, it is helpful to consider a suitable projection and 
we define $P$ and $Q$ as
\benn
P:=\int_\Omega p(x)e^n_{\textnormal{b}}(x)~\txtd x,\qquad 
Q:=\int_\Omega q(x)e^n_{\textnormal{b}}(x)~\txtd x.
\eenn
Multiplying the first two equations~\eqref{pq} by $e^n_{\textnormal{b}}$ and 
integrating over $\Omega$ and using boundary conditions one obtains an algebraic 
system for $P$ and $Q$ given by
\begin{equation}
\label{PQ}
	\begin{split}
	\left\{
	\begin{array}{rcl}
	\displaystyle
	P-g(u_1^*)Q &=&-g(u_1^*),\\
	(f'(u_1^*)-\delta^n_{\textnormal{b}}\mu_n)P-(\kappa\mu_n+\alpha) Q&=&0.
	\end{array}
	\right.
	\end{split}
\end{equation}
By the definition of $\delta^n_{\textnormal{b}}$, the determinant of the matrix of coefficients on the 
left-hand side of the system~\eqref{PQ} is zero. This implies that the inhomogeneous 
linear system has no solution. Hence the system~\eqref{pq} has no solutions and the 
result~\eqref{eq:cross_cond} in (L6) follows.
\end{proof}

Note that (L5)-(L6) are just the results (R2)-(R3) claimed in Theorem~\ref{thm:main_ana_bif}. 
By applying~\cite[Thm.~4.3]{ShiWang} we obtain the existence of a 
non-trivial branch of solutions.
Therefore, the local dynamics of the problem already shows that the entropy method 
cannot provide exponential decay to a distinguished steady state for all parameter
values. 

\section{Numerical Bifurcation Analysis -- Continuation Results}\label{sec.numerics.res}

In Section~\ref{sec.results1} we proved the existence of a weak solution for 
$\delta>\delta^*=-\kappa/\delta$ as well as global convergence to a steady state for 
$\delta>\delta_{\txte}$ ($\delta\neq 0$); in addition, $\delta_{\txte}$ converges to
$\delta^*=-\kappa/\gamma$ as $\alpha\ra +\I$ and $\delta_{\txte}$ converges to
$+\I$ as $\alpha\ra 0$.
In Section~\ref{sec.results2} we showed the existence of non-trivial solutions for 
$\delta=\delta^n_ {\textnormal{b}}$ where $\delta^n_ {\textnormal{b}}$ is defined in~\eqref{deltab}
and in particular $\delta^n_ {\textnormal{b}}$ could be bigger or smaller than $\delta_\txtd=\kappa/g(u_1^*)$
depending on $\alpha$.

The numerical continuation results presented in this section aim to augment and extend these 
results. To simplify the comparison to numerical results, we focus on the case
\benn
\kappa=1,\qquad g(s)=s(1-s),\qquad f(s)=s(1-s),
\eenn
which yields the condition $\delta>\delta^*=-4$ for the validity of the entropy method for $\alpha\ra +\I$.
As already mentioned, the values for $\delta^n_ {\textnormal{b}}$ depend on $\alpha$, so we are going
to study a case with $\alpha$ sufficiently large (Section~\ref{sec.case1}) and the case with $\alpha$ 
sufficiently small (Section~\ref{sec.case2}). Below we are going to define the meaning of sufficiently 
large and sufficiently small. First, we want to compare the values that we obtain for 
$\delta^n_ {\textnormal{b}}$ with the numerical results. The analytical problem did not include the small
parameter $\rho$ and the introduction of this term has the effect of shifting the bifurcation points.

\subsection{Comparison between the values of $\delta^n_ {\textnormal{b}}$}
\label{comparison_deltab}

The formula for $\delta^n_ {\textnormal{b}}$ given in the equation~\eqref{deltab} does not
consider the additional term $\rho$. Introducing this term, we get a new formula which reads
\be
\label{new_deltab}
 \delta^n_ {\textnormal{b}}=\frac{f'(u_1^*)}{\mu}-
\frac{(\kappa\mu+\alpha)(\mu+\rho)}{g(u_1^*)\mu^2}
 = \delta_{\textnormal{d}}+\frac{1}{\mu}\Bigl[ f'(u_1^*) - 
\frac{\kappa\rho+\alpha}{g(u_1^*)} -\frac{\alpha\rho}{g(u_1^*)\mu}\Bigr].
\ee
We observe that the formulas~\eqref{deltab} and~\eqref{new_deltab}, due to the presence of the term 
$\rho$, will not give the same values $\delta^n_ {\textnormal{b}}$ but the two equations correspond 
if we take $\rho=0$. We fix the following parameter values
\benn
(\kappa,\alpha,l,\bar{u}_1,\rho) =(1, 0.2, 20,0.594, 0.05).
\eenn
We are interested in computing the values of $\delta^n_ {\textnormal{b}}$ and to observe how the parameter
$\rho$ shifts the bifurcation branches.

\begin{table}
\begin{center}
\begin{tabular}{c||c|c|c|c|c|c|c|c|c}
 n & 1 & 2 & 3 & 4 & 5 & 6 & 7 & 8 & 9  \\
 \hline
\eqref{deltab} & -45.38 & -14.45 & -8.73 & -6.72 & -5.80 & -5.29 & -4.99 & -4.79 & -4.66  \\
\eqref{new_deltab} & -121.89 & -20.81 & -10.50 & -7.51 & -6.24 & -5.58  & -5.19 & -4.94 & -4.77 \\
\texttt{AUTO} & -121.89 & -20.81 & -10.50 & -7.51 & -6.24 & -5.58  & -5.19 & -4.94 & -4.77\\
\end{tabular}
\caption{\label{table:deltab}Comparison between the analytical and numerical bifurcation values.
The last two rows compare the numerical and analytical solutions with $0<\rho\ll1$.}
\end{center}
\end{table}

In Table~\ref{table:deltab} we reported the bifurcation points $\delta^n_ {\textnormal{b}}$ 
for $n\in\{1,2,\ldots,9\}$ computed with the two formulas~\eqref{deltab} and~\eqref{new_deltab} 
in comparison to 
the numerical continuation results using \texttt{AUTO}. The values detected using \texttt{AUTO} 
precisely correspond to the values computed with the formula~\eqref{new_deltab} as expected 
while the points are shifted in comparison to the values for $\rho=0$.

\subsection{Case 1: $\alpha$ sufficiently large}
\label{sec.case1}

Recall the formula for $\delta^n_ {\textnormal{b}}$ given in~\eqref{deltab}:
\[
\delta^n_{\textnormal{b}}=
\delta_{\textnormal{d}}+\frac{1}{\mu_n}\Bigl[f'(u_1^*)-\frac{\alpha}{g(u_1^*)}\Bigr].
\]
We observe that if $\alpha>f'(u_1^*)g(u_1^*)$ then $\delta^n_ {\textnormal{b}}<\delta_\txtd$
and the branches will approach the limit value $\delta_\txtd$ for $n\rightarrow\infty$. Since 
we are using~\eqref{new_deltab}, the condition on $\alpha$ is
\benn
\alpha>\mu_n\Bigl[ \frac{f'(u_1^*)g(u_1^*) -\kappa\rho}{\rho + \mu_n}\Bigr]
\eenn
and, in the case of an interval we can compute the eigenvalues $\mu$.
So, $\alpha$ sufficiently large means
\be
\label{eq:condalpharho}
\alpha>\Bigl(\frac{n\pi}{l}\Bigr)^2\Bigl[ \frac{f'(u_1^*)g(u_1^*) -
\kappa\rho}{\rho + (\frac{n\pi}{l})^2}\Bigr].
\ee
Figure~\ref{fig:01} shows a continuation calculation for fixed parameters 
\benn
(\kappa,\alpha,l,\bar{u}_1,\rho)=(p_2,p_3,p_4,p_5,p_6)=(1,0.2,12,0.594,0.05)
\eenn
using $\delta$ as the primary bifurcation parameter. We observe that the condition 
on $\alpha$ is satisfied since the right-hand side of~\eqref{eq:condalpharho} is 
negative for all $n\in\N$ and $\alpha=0.2$. The steady state we start the continuation 
with is given by
\benn
(u_1^*,u_2^*)=(\bar{u}_1,f(\bar{u}_1)/\alpha).
\eenn

\begin{figure}[htbp]
\psfrag{u}{\small{$u_1$}}
\psfrag{x}{\small{$x$}}
\psfrag{a}{\small{(a)}}
\psfrag{b1}{\small{(b1)}}
\psfrag{b2}{\small{(b2)}}
\psfrag{b3}{\small{(b3)}}
\psfrag{b4}{\small{(b4)}}
\psfrag{delta}{\small{$\delta$}}
\psfrag{L2}{\scriptsize{$\|z\|_{L^2}$}}
	\centering
		\includegraphics[width=1\textwidth]{./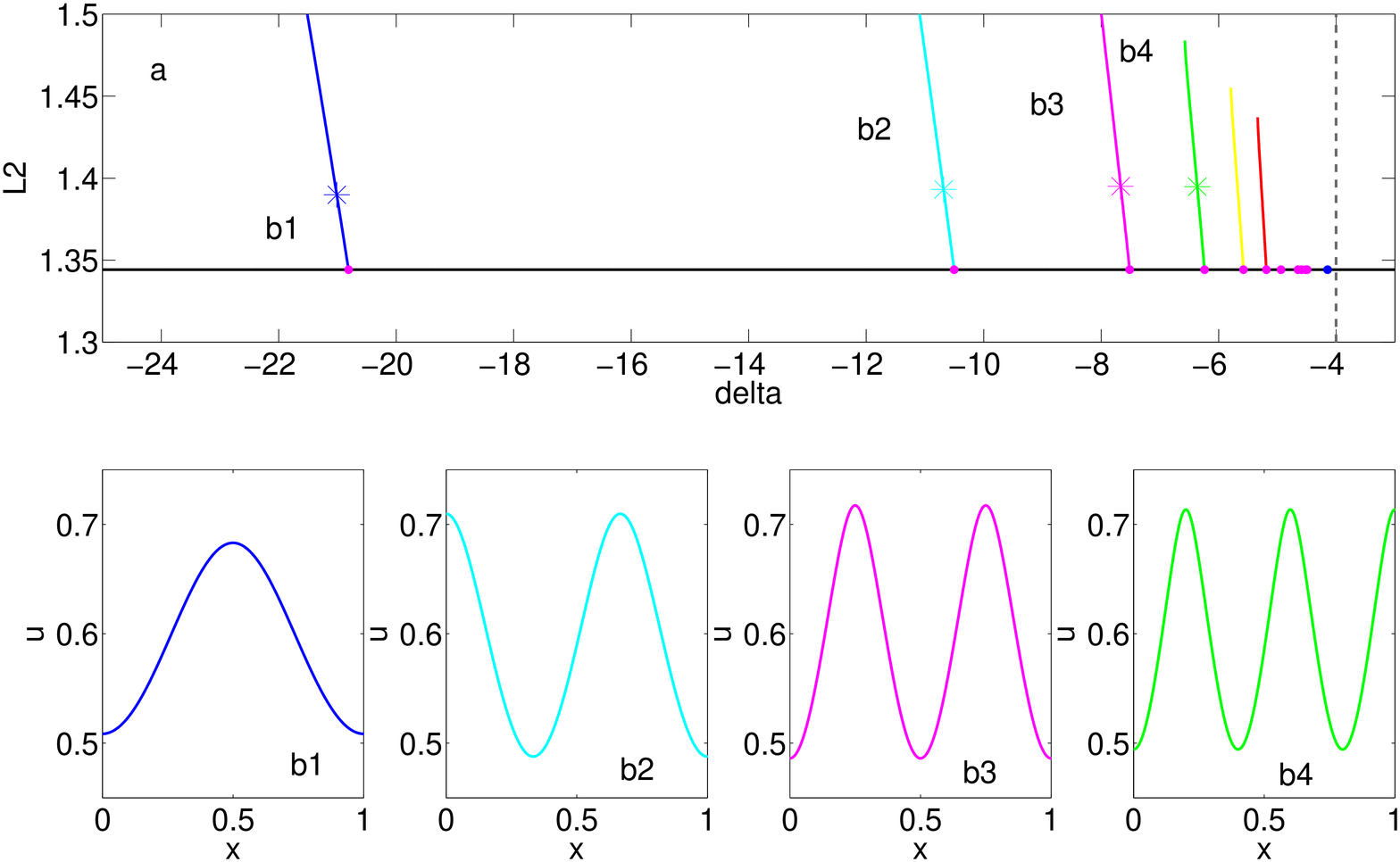}
		\caption{\label{fig:01}Continuation calculation for the system \eqref{eq:BVP_AUTO} with parameter
		values $(\kappa,\alpha,l,\bar{u}_1,\rho)=(p_2,p_3,p_4,p_5,p_6)=(1,0.2,20,0.594,0.05)$ and 
		primary bifurcation parameter $\delta$. (a) Bifurcation diagram in $(\delta,
		\|z\|_{L^2})$-space showing the parameter on the horizontal axis and the solution norm
		on the vertical axis. Some of the detected bifurcation points are marked as circles (magenta). 
		The last branch point (blue circle) is not a true bifurcation point but results from
		the degeneracy $\delta=-\kappa/g(u_1^*)=:\delta_\txtd$. At the other branches points 
		(magenta, filled circles) non-homogeneous solution branches (blue, cyan, magenta, green...) 
		bifurcate via single eigenvalue crossing. The value $\delta^*=-\kappa/\gamma=-4$
		is marked by a vertical grey dashed line. (b) Solutions are plotted for $(x,u_1=u_1(x))$ at 
		certain points on the non-homogeneous branches; the solutions are marked in (a) using crosses.
		}
\end{figure} 

We begin the continuation at $\delta=-25$ and we find only one bifurcation point when $\delta$ is decreasing, 
i.e.\ for $\delta<-25$. This result is expected since $\delta^1_ {\textnormal{b}}=-121.889$ is the value corresponding to the first eigenvalue. Moreover, we do not detect any bifurcations for $\delta>-4=\delta^*$.
The interesting results in the bifurcation calculation in Figure~\ref{fig:01} occur when we increase
the primary bifurcation parameter $\delta$. In this case, several branch points are detected, in particular
the closer we are to the value $\delta_\txtd$, the more bifurcation points are found. 
In Figure~\ref{fig:01}, we have shown the first six branch points detected obtained upon increasing $\delta$. 
The point detected at $\delta = -20.8116$ corresponds to the second non-trivial bifurcation branch. 
There are more and more points as we get closer to $\delta_\txtd$.
The last point detected (in blue) is not a bifurcation point but corresponds to the degeneracy at
\benn
\kappa/g(u_1^*)= -1/(0.594(1-0.594))\approx-4.1466.
\eenn
The remaining detected branch points in Figure~\ref{fig:01} are true bifurcation points. This 
numerical result is in accordance with the analytical results on the existence of bifurcations in 
Theorem~\ref{thm:main_ana_bif}. In fact, one can carry out the same calculation as in 
Section~\ref{sec.bifurcation.res}. At each bifurcation point, a simple eigenvalue
crosses the imaginary axis. One can use the branch switching algorithm implemented in \texttt{AUTO} to
compute the non-homogeneous families of solutions as shown for four points in
Figure~\ref{fig:01}(a). In Figure~\ref{fig:01}(b), we show a representative solution $u_1=u_1(x)$ on each 
of the four solution families. The solutions are non-homogeneous steady states and have interface-like
behaviour in the spatial variable. Each family has a characteristic number of these interfaces.
There are families with even more interfaces than the one shown in Figure~\ref{fig:01}(b4), which
can be found upon increasing $\delta$ even further; we are not interested in these highly oscillatory 
solutions here.\medskip

\begin{figure}[htbp]
\psfrag{u}{\small{$u_1$}}
\psfrag{x}{\small{$x$}}
\psfrag{a}{\small{(a)}}
\psfrag{b}{\small{(b)}}
\psfrag{delta}{\small{$\delta$}}
\psfrag{L2}{\scriptsize{$\|z\|_{L^2}$}}
	\centering
		\includegraphics[width=0.8\textwidth]{./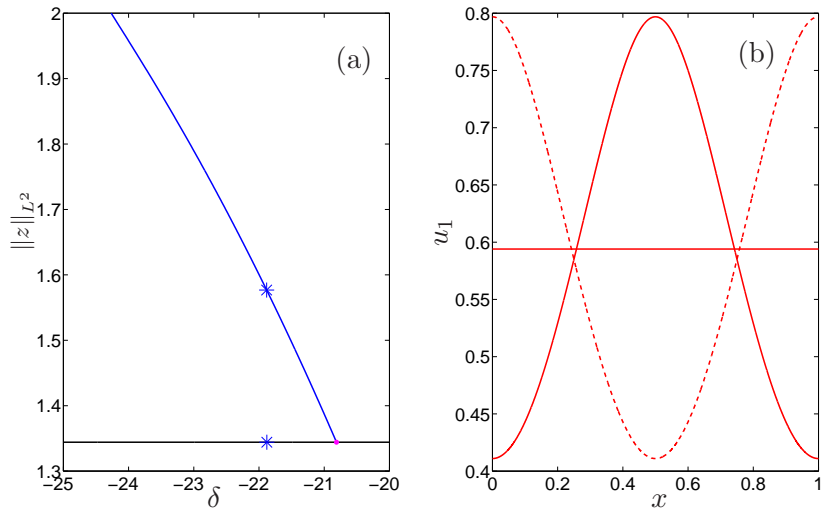}
		\caption{\label{fig:02}Continuation calculation for the system \eqref{eq:BVP_AUTO} as in 
		Figure~\ref{fig:01} 
		with a focus on the second bifurcation point (filled circle, magenta). One can show that by
		using two different local branching directions that two different non-homogeneous solution 
		branches (red) bifurcate 
		via single eigenvalue crossing but the two branches contain solutions with identical 
		$L^2$-norm for the same parameter value. This is a result of a symmetry in the problem. 
		(b) Three different solutions plotted in $(x,u_1=u_1(x))$-space 
		at the parameter value $\delta=-21.8819$. The three solutions are marked in (a) using crosses.
		}
\end{figure} 

Another observation regarding the continuation run in Figure~\ref{fig:01} is reported in more detail
in Figure~\ref{fig:02} with a focus on the second bifurcation point. It is shown that there are actually 
two different branches bifurcating at the same point with families of non-homogeneous solutions that
are symmetric. In particular, one non-trivial solution branch can be transformed into the other by 
considering $u\mapsto 1-u$; as an illustration we refer to three representative numerical solutions on the 
three branches originating at the second bifurcation point as shown in Figure~\ref{fig:02}(b).

\subsection{Case 2: $\alpha$ sufficiently small}
\label{sec.case2}

As specified in (M7) when $\alpha<f'(u_1^*)g(u_1^*)$ then $\delta^n_ {\textnormal{b}}>\delta_\txtd$ and 
this means that the branches will approach the limit value $\delta_\txtd$ from the right.
As pointed out in~\ref{comparison_deltab}, the condition on $\alpha$ is more complicated since our model 
contains $\rho$. The condition on $\alpha$ becomes
\benn
0<\alpha<\mu_n\Bigl[\frac{f'(u_1^*)g(u_1^*) - \kappa\rho}{\rho + \mu_n}\Bigr],
\eenn
i.e. we must choose an $\alpha$ which satisfied the inequality for each single $\mu_n$. We 
fix
\benn
(\kappa,\alpha,l,\bar{u}_1,\rho) =(1, 0.001, 50, 0.211325, 0.05)
\eenn
for the numerical continuation in this section. 

\begin{figure}[htbp]
\psfrag{u}{\small{$u_1$}}
\psfrag{x}{\small{$x$}}
\psfrag{a}{\small{(a)}}
\psfrag{b1}{\small{(b1)}}
\psfrag{b2}{\small{(b2)}}
\psfrag{b3}{\small{(b3)}}
\psfrag{delta}{\small{$\delta$}}
\psfrag{L2}{\scriptsize{$\|z\|_{L^2}$}}
	\centering
		\includegraphics[width=1\textwidth]{./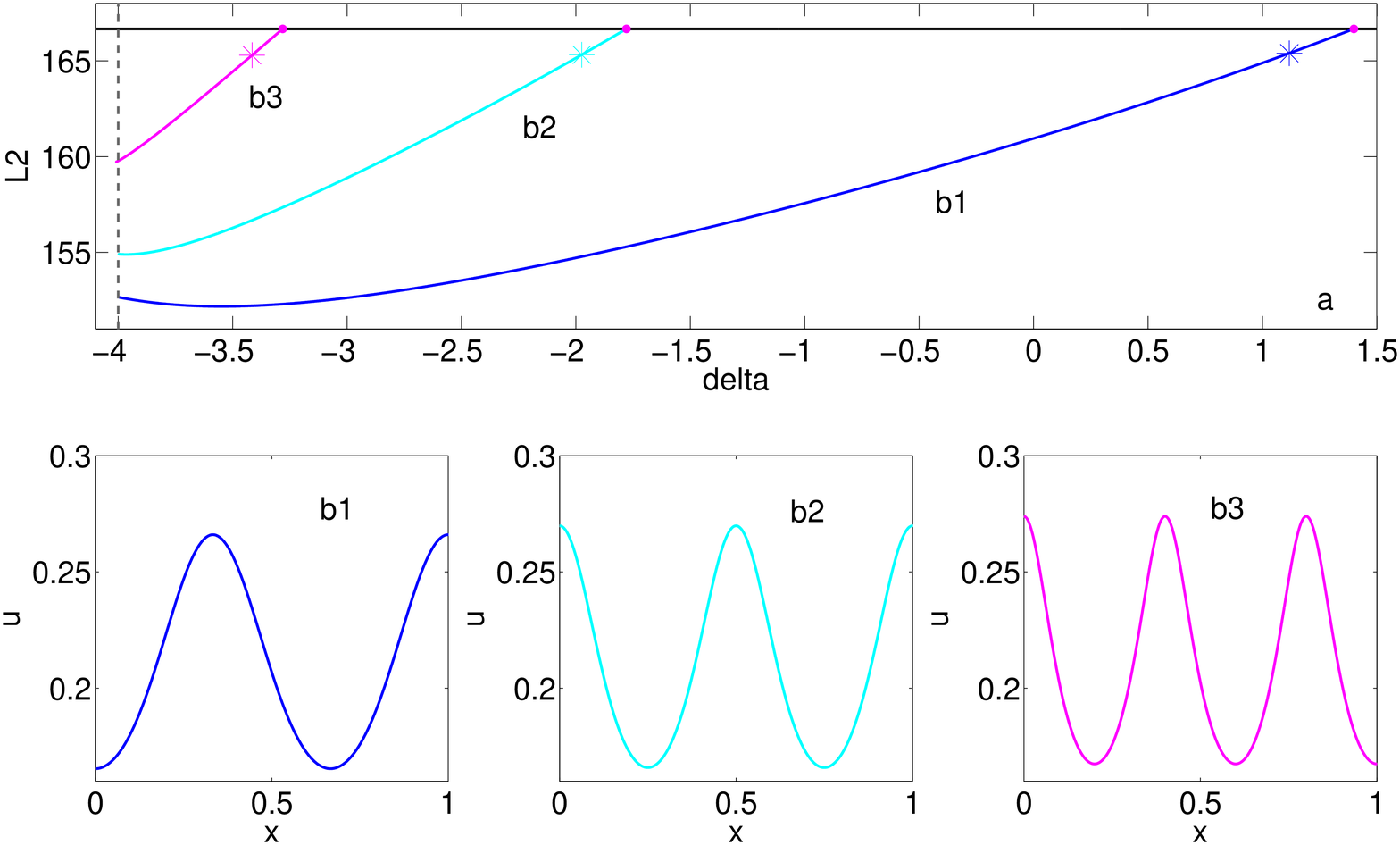}
		\caption{\label{fig:bif_diag1}Continuation calculation for the system \eqref{eq:BVP_AUTO} with 
		parameter values $(\kappa,\alpha,l,\bar{u}_1,\rho)=(p_2,p_3,p_4,p_5,p_6)=(1,0.0001,50,0.211325,0.05)$ 
		and primary bifurcation parameter $\delta$. (a) Bifurcation diagram in $(\delta,
		\|z\|_{L^2})$-space showing the parameter on the horizontal axis and the solution norm
		on the vertical axis. The detected bifurcation points are marked as circles (magenta). 
		At the three branch points (magenta, filled circles) non-homogeneous solution branches 
		(blue, cyan, magenta) corresponding to $\delta^3_ {\textnormal{b}}, \delta^4_ {\textnormal{b}},
		\delta^5_ {\textnormal{b}}$ bifurcate via single 
		eigenvalue crossing. The value $\delta^*=-\kappa/\gamma=-4$
		is marked by a vertical grey dashed line. (b) Solutions are plotted for $(x,u_1=u_1(x))$ at certain
		points on the non-homogeneous branches; the solutions are marked in (a) using crosses.}
\end{figure} 

With these values the condition on $\alpha$ is 
given by $0<\alpha<0.0033827$ which is satisfied. We also find that with our choices 
\[
\delta_\txtd<\delta^n_ {\textnormal{b}}<\delta^*<\delta^5_ {\textnormal{b}}<
\delta^4_ {\textnormal{b}}<\delta^3_ {\textnormal{b}}<\delta^2_ {\textnormal{b}}<
\delta^1_ {\textnormal{b}}<\delta_\txte,\quad n\geq6,
\]
i.e. there are some bifurcation points which are bigger than $\delta^*$ and some which are smaller but all
of them are bigger than $\delta_\txtd$. We begin the continuation at $\delta=3$ and we detect only two 
more branches when we increase $\delta$ at $\delta=43.4851$ and $\delta=9.98041$ which correspond to 
$\delta^1_ {\textnormal{b}}$ and $\delta^2_ {\textnormal{b}}$. We focus on the branches for $n\in\{1,2,3,4,5\}$ 
such that $\delta^n_ {\textnormal{b}}>\delta^*$. This case is represented in Figure~\ref{fig:bif_diag1}.

\begin{figure}[htbp]
\psfrag{u}{\small{$u_1$}}
\psfrag{x}{\small{$x$}}
\psfrag{a}{\small{(a)}}
\psfrag{b1}{\small{(b1)}}
\psfrag{b2}{\small{(b2)}}
\psfrag{b3}{\small{(b3)}}
\psfrag{delta}{\small{$\delta$}}
\psfrag{L2}{\scriptsize{$\|z\|_{L^2}$}}
	\centering
		\includegraphics[width=1\textwidth]{./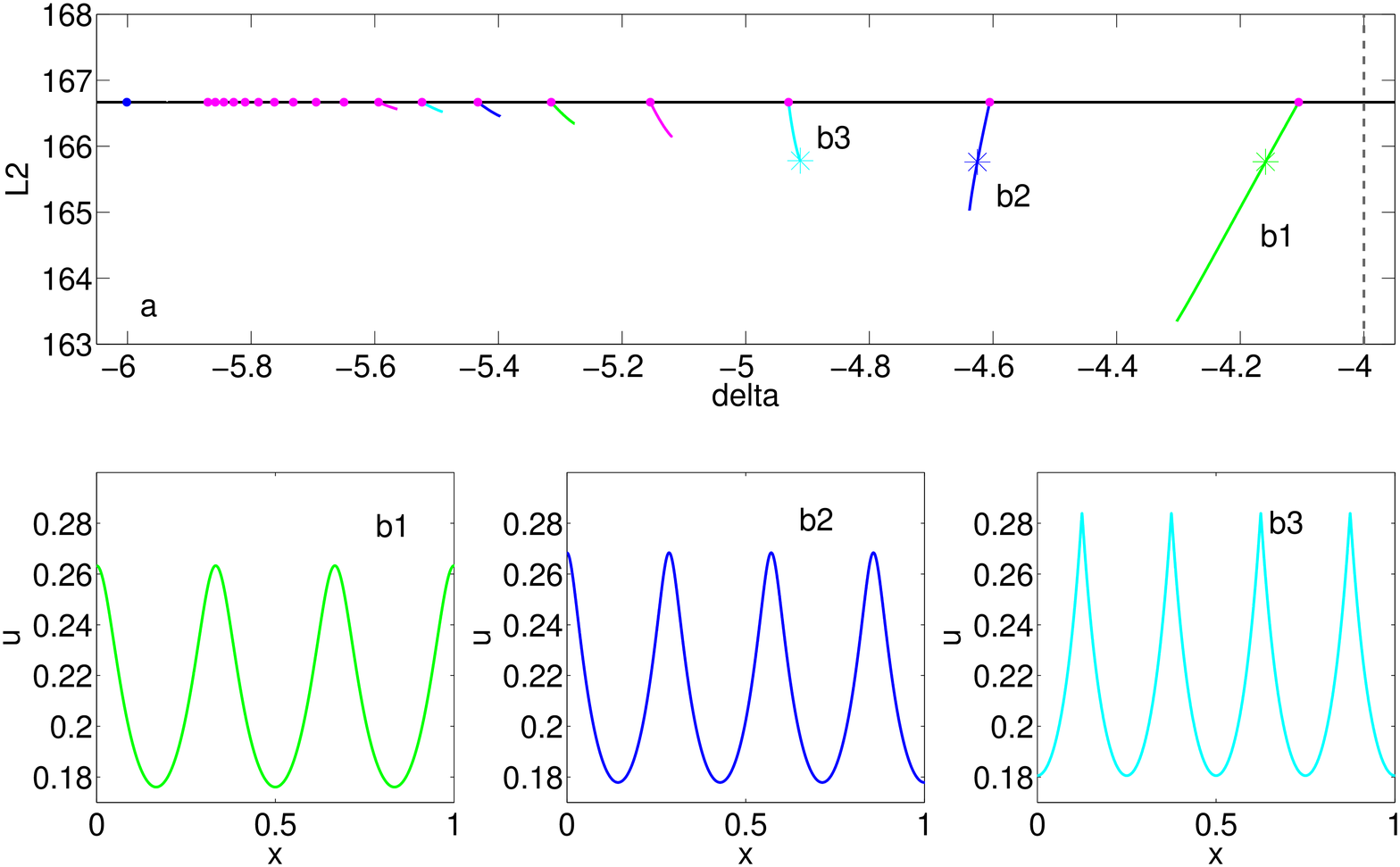}
		\caption{\label{fig:bif_diag2}Continuation calculation for the system \eqref{eq:BVP_AUTO} with parameter
		values $(\kappa,\alpha,l,\bar{u}_1,\rho)=(p_2,p_3,p_4,p_5,p_6)=(1,0.0001,50,0.211325,0.05)$ and 
		primary bifurcation parameter $\delta$. (a) Bifurcation diagram in $(\delta,
		\|z\|_{L^2})$-space showing the parameter on the horizontal axis and the solution norm
		on the vertical axis. Some of the detected bifurcation points are marked as circles (magenta). 
		The last branch point (blue circle) is not a true bifurcation point but results from
		the degeneracy $\delta=-\kappa/g(u_1^*)=:\delta_\txtd$. At the other branch points 
		(magenta, filled circles) non-homogeneous solution branches (green, blue, cyan) bifurcate via single 
		eigenvalue crossing. The value $\delta^*=-\kappa/\gamma=-4$
		is marked by a vertical grey dashed line. (b) Solutions are plotted for $(x,u_1=u_1(x))$ at certain
		points on the non-homogeneous branches; the solutions are marked in (a) using crosses.}
\end{figure} 

Numerically we observe that all the branches stop when they reach the critical value $\delta^*$. 
Next, we consider $n\geq 6$ such that $\delta_\txtd<\delta^n_ {\textnormal{b}}<\delta^*$ as 
reported in Figure~\ref{fig:bif_diag2}. In this case there are two critical values: $\delta^*=-4$ 
(dashed line) and $\delta_\txtd= -6$ (blue circle). The branches detected for a $\delta$ close to 
$\delta^*$ have the same direction as the branches detected for $\delta>\delta^*$; but
starting from a certain $n$, in this case $n=8$, we notice that the branches change the direction. Probably
this behaviour is due to the fact that the branches cannot cross the value $\delta=\delta_\txtd$.
We do no detect any branch for $\delta<\delta_\txtd$.

In the range between $\delta_\txtd$ and $\delta^*$ the branches do not seem to overlap. Numerically, 
one observes that the branches get shorter and shorter due to the numerical continuation breaking down as 
the branches approach $\delta_\txtd$. Looking at the shape of the solutions in the different branches we 
can observe that they have more and more interfaces as we approach the limiting value $\delta_\txtd$. 
Moreover, the solutions inside a fixed branch get sharper and sharper peaks along the branch
(see for example the cyan branch).

\subsection{Continuation in $\rho$}
\label{rho}

The next question is if we can find non-homogeneous steady states also for the 
original problem with $\rho=0$. This can be achieved by using a homotopy-continuation idea. 

\begin{figure}[htbp]
\psfrag{u}{\small{$u_1$}}
\psfrag{x}{\small{$x$}}
\psfrag{b1}{\small{(b1)}}
\psfrag{b2}{\small{(b2)}}
\psfrag{b3}{\small{(b3)}}
\psfrag{a1}{\small{(a1)}}
\psfrag{a2}{\small{(a2)}}
\psfrag{a3}{\small{(a3)}}
\psfrag{c1}{\small{(c1)}}
\psfrag{c2}{\small{(c2)}}
\psfrag{c3}{\small{(c3)}}
\psfrag{rho}{\small{$\rho$}}
\psfrag{L2}{\scriptsize{$\|z\|_{L^2}$}}
	\centering
		\includegraphics[width=1\textwidth]{./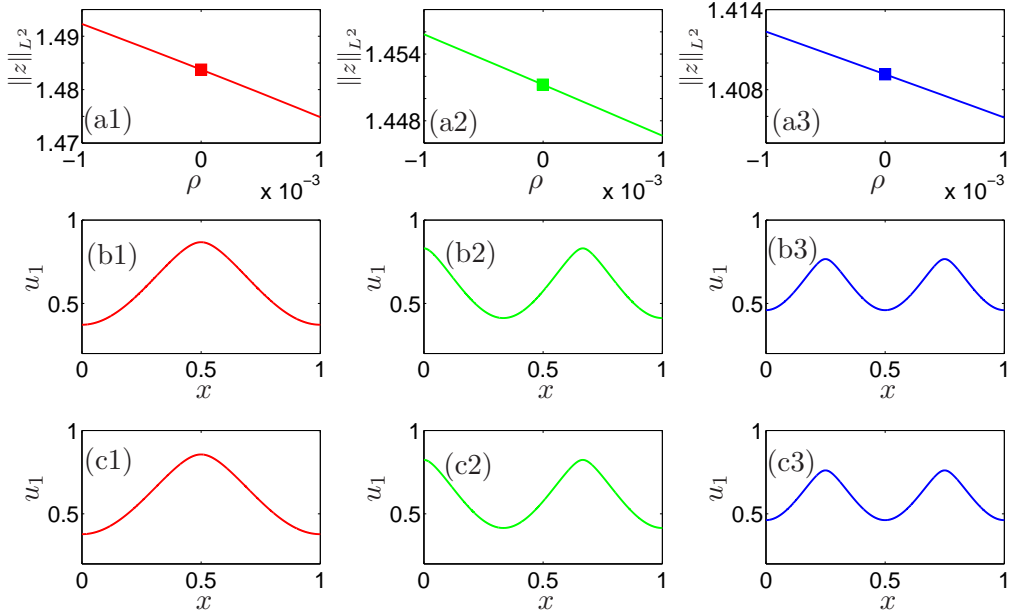}
		\caption{\label{fig:04}Continuation calculation for the system \eqref{eq:BVP_AUTO} starting with the 
		same basic parameter values as in Figure~\ref{fig:01} but with $\rho=0.001$.
		We stop the continuation at the 
		solution points for a certain $\delta$ (as done in Figure~\ref{fig:01}(a)) and change 
		from $\delta$ as a primary continuation parameter to $\rho$ as a primary parameter with 
		the goal to decrease the parameter to $\rho=0$. The values for $\delta$ are
		$\delta=-16$ for the red branch, $\delta=-9.4$ for the green branch and $\delta=-7$ for the blue one.
		(a1)-(a3) Bifurcation diagrams in 
		$(\rho,\|z\|_{L^2})$-space. The starting point for the continuation is at the right boundary
		where $\rho=0.001$ and then $\rho$ is decreased. (b1)-(b3) Solutions obtained on the bifurcation
		branches above at the point $\rho=0$ (points are marked with squares in (a1)-(a3)).
		(c1)-(c3) Solutions obtained on the bifurcation
		branches for the initial system with $\rho=0.001$. We can observe that also for $\rho=0$ the 
		solutions have a non-trivial herding-type profile.
		}
\end{figure} 

First, 
we continue the problem in $\delta$ and compute the non-homogeneous solution branches. Then we pick
a steady state on the non-homogeneous branch and switch to continuation in $\rho$ while keeping
$\delta$ fixed. The results of this strategy are shown in Figure~\ref{fig:04} (for $\alpha=0.2$)
and in Figure~\ref{fig:05} (for $\alpha=0.001$). For the first three
solutions shown in Figure~\ref{fig:01}(b), this strategy works if we start from a very small $\rho$.
Figure~\ref{fig:04}(c) shows the solution in the branch for a $\rho\neq 0$: we notice that
the solutions for the case $\rho=0$ keep the non-constant profile as for $\rho\neq0$ yielding
relevant herding solutions for applications.

\begin{figure}[htbp]
\psfrag{u}{\small{$u_1$}}
\psfrag{x}{\small{$x$}}
\psfrag{a}{\small{(a)}}
\psfrag{b}{\small{(b)}}
\psfrag{rho}{\small{$\rho$}}
\psfrag{L2}{\scriptsize{$\|z\|_{L^2}$}}
	\centering
		\includegraphics[width=0.8\textwidth]{./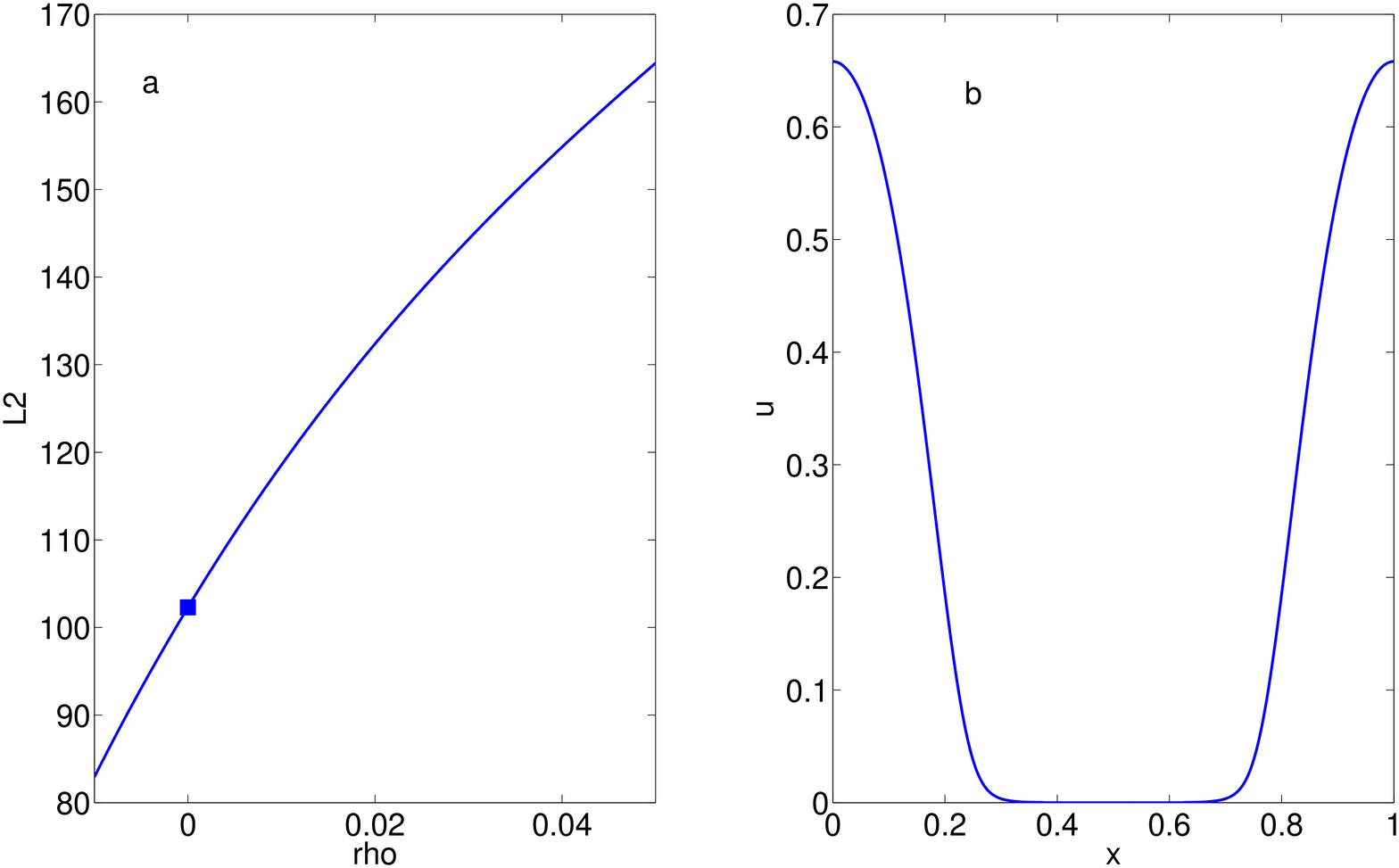}
		\caption{\label{fig:05}Continuation calculation for the system \eqref{eq:BVP_AUTO} starting
		with the same parameter value and as in Figure~\ref{fig:bif_diag1}.
		We stop the continuation at $\delta=-9$ (as done in Figure~\ref{fig:bif_diag1}(a)) and change 
		from $\delta$ as a primary continuation parameter to $\rho$ as a primary parameter with 
		the goal to decrease the parameter to $\rho=0$. (a) Bifurcation diagram in 
		$(\rho,\|z\|_{L^2})$-space. The starting point for the continuation is at the right boundary
		where $\rho=0.05$ and then $\rho$ is decreased. (b) Solution on the second branch $\delta^2_ {\textnormal{b}}$
		of non-homogeneous steady states at $\rho=0$ (point is marked with squares in (a)).
		}
\end{figure} 

In the case with $\alpha$ sufficiently small, the strategy works better and
we indeed find non-homogeneous steady states for $\rho=0$ as shown in Figure~\ref{fig:05}(b).
Moreover we can also obtain herding solutions. We use the starting parameter values
\benn
(\kappa,\alpha,l,\bar{u}_1,\rho)=(1,0.001,50,0.211325,0.05).
\eenn
We start from $\delta=10$ and the first branch we detect is $\delta^2_ {\textnormal{b}} = 9.98041$.
Once we are in this branch, we continue in $\rho$ for a fixed $\delta$ (in this case
$\delta=-9$). For information herding models, solutions which are of particular importance are 
those with sharp interfaces between the endstates, i.e., the solution is near zero and near one 
in certain regions with sharp interfaces in between. 
These solutions represent a herding effect in the sense of sharply split opinions.
More precisely, they indicate for which values of the information variable
$x$ we observe a herding behaviour, i.e.\ a concentration of individuals
($u\approx 1$) at certain values of $x$. Figure \ref{fig:05}(b) shows herding
in the interval $[0,0.2]\cup[0.8,1]$, while only a few individuals
adopt the information value in $[0.3,0.7]$.

\subsection{Solutions and other parameters}

In this section we focus on the case with $\alpha$ sufficiently small. We are interested in studying, 
how the solutions change depending on the other parameters $\kappa$ and $l$. We fix as starting parameters 
\benn
(\kappa,\alpha,l,\bar{u}_1,\rho)=(1,0.001,50,0.211325,0.05)
\eenn
and consider the branch $\delta^2_ {\textnormal{b}}$. We study the solutions depending on the different 
parameters. In Figure~\ref{fig:10} we show changes along the branch (which bifurcates at $\delta=9.98041$).
We observe that the shape is the same along the branch but the interfaces sharpen as $\delta$
is decreased.\medskip

\begin{figure}[htbp]
\psfrag{u}{\small{$u_1$}}
\psfrag{x}{\small{$x$}}
\psfrag{a}{\small{(a)}}
\psfrag{b}{\small{(b)}}
\psfrag{c}{\small{(c)}}
	\centering
		\includegraphics[trim = 00mm 70mm 00mm 65mm, clip,width=1\textwidth]{./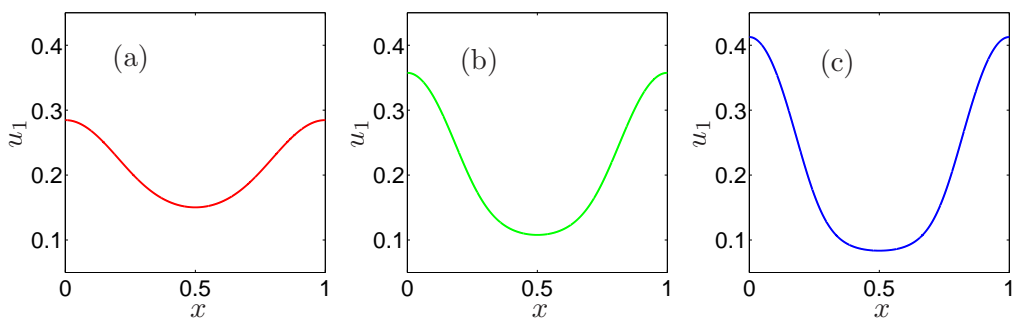}
		\caption{\label{fig:10} Solutions along the branch $\delta^2_ {\textnormal{b}}$ for the 
		system~\eqref{eq:BVP_AUTO} with parameter values 
		$(\kappa,\alpha,l,\bar{u}_1,\rho)=(p_2,p_3,p_4,p_5,p_6)=(1,0.001,50,0.211325,0.05)$.
		(a) Solution of non-homogeneous steady states at $\delta=8.72901$. (b) Solution 
		of non-homogeneous steady states at $\delta = 5.76477$. (c) Solution of non-homogeneous steady 
		states at $\delta=1.548$.}
\end{figure} 

In Figure~\ref{fig:11} we show how the solution changes with the length of the domain.
We consider $l=20$, $l=50$ and $l=100$. The branch $\delta^2_ {\textnormal{b}}$
is detected at $\delta=-3.28144, 9.98041, 43.4851$ respectively.
Since we consider the same branch, the shape does not change and length of the domain 
shifts the bifurcation points and just scales the solution.

\begin{figure}[htbp]
\psfrag{u}{\small{$u_1$}}
\psfrag{x}{\small{$x$}}
\psfrag{a}{\small{(a)}}
\psfrag{b}{\small{(b)}}
\psfrag{c}{\small{(c)}}
	\centering
		\includegraphics[trim = 00mm 70mm 00mm 65mm, clip,width=1\textwidth]{./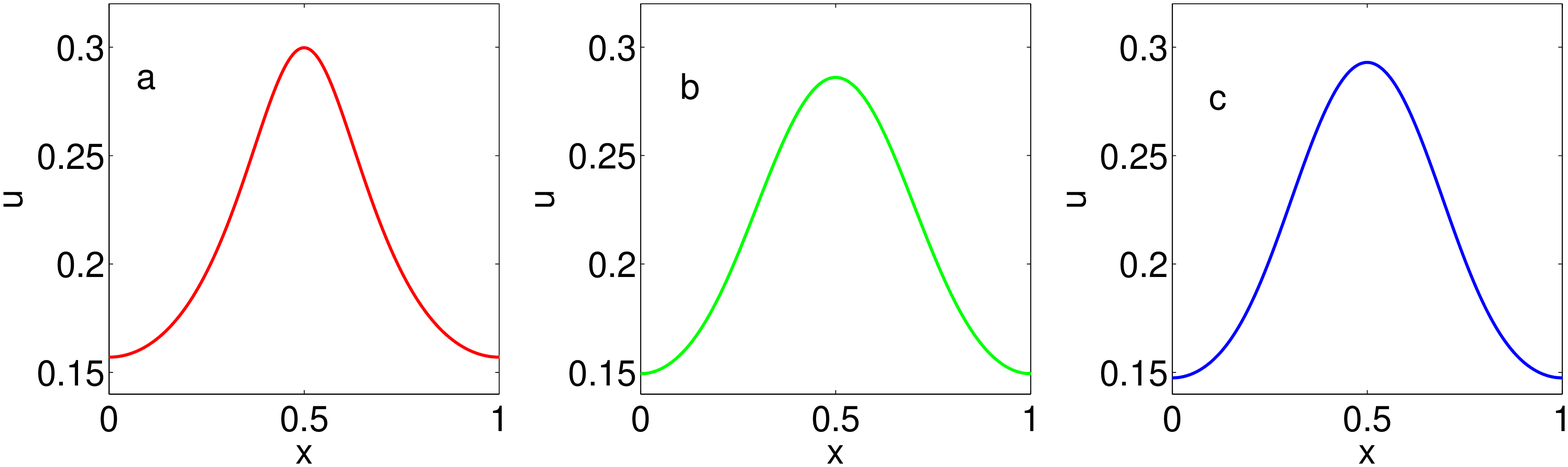}
		\caption{\label{fig:11} Solutions in the branch $\delta^2_ {\textnormal{b}}$ for the 
		system~\eqref{eq:BVP_AUTO} with parameter values $(\kappa,\alpha,\bar{u}_1,\rho)
		=(p_2,p_3,p_5,p_6)=(1,0.001,0.211325,0.05)$.
		(a) Solution of non-homogeneous steady states at $\delta=-3.5154$, $l=20$. (b) Solution 
		of non-homogeneous steady states at $\delta =8.93964 $, $l=50$. (c) Solution of non-homogeneous 
		steady states at $\delta=37.9117$, $l=100$.}
\end{figure}  

When we change the parameter $\kappa$ the bifurcation points are also simply shifted.
We consider $\kappa=1$, $\kappa=5$ and $\kappa=10$. The branch $\delta^2_ {\textnormal{b}}$
is detected at $\delta=9.98041,-92.2877,-214.999$ respectively. Moreover, for the first 
case the branches approach the value $\delta_\txtd$ from the right, while in the other two cases 
from the left. As for the previous case we consider three different solutions with (almost) the 
same norm ($163.863$ for the case (a), $163.872$ for (b) and $163.911$ for (c)).

\begin{figure}[htbp]
\psfrag{u}{\small{$u_1$}}
\psfrag{x}{\small{$x$}}
\psfrag{a}{\small{(a)}}
\psfrag{b}{\small{(b)}}
\psfrag{c}{\small{(c)}}
	\centering
		\includegraphics[trim = 00mm 70mm 00mm 65mm, clip,width=1\textwidth]{./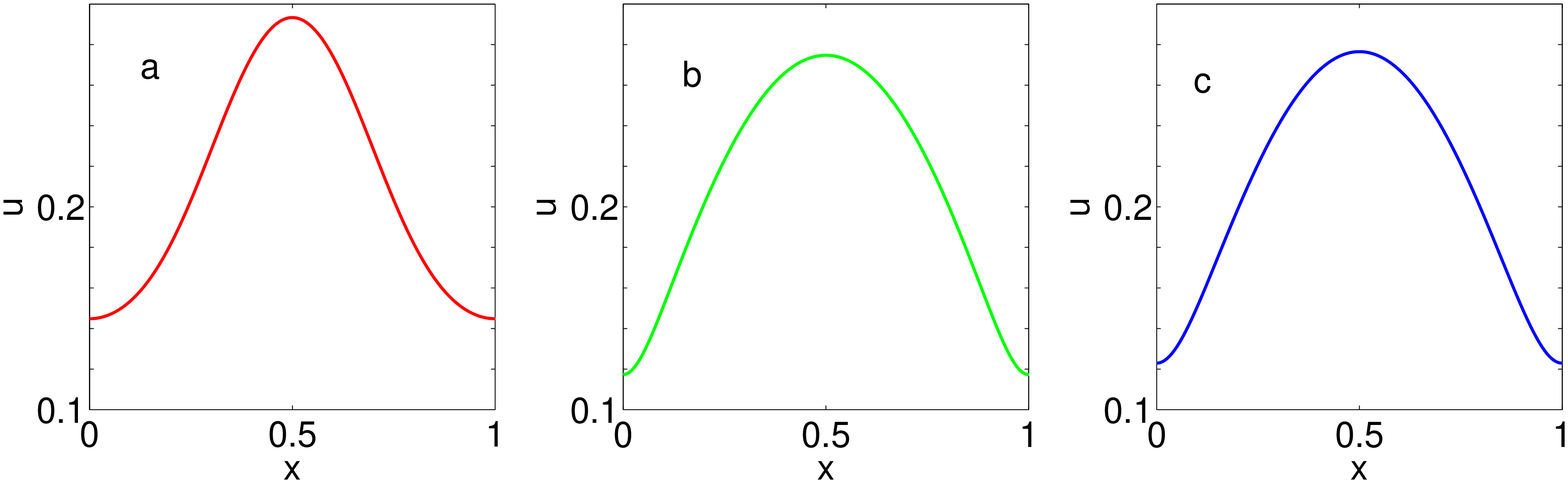}
		\caption{\label{fig:12} Solutions in the branch $\delta^2_ {\textnormal{b}}$ for the 
		system~\eqref{eq:BVP_AUTO} with parameter values $(\alpha,l,\bar{u}_1,\rho)=(p_3,p_4,p_5,p_6)
		=(0.001,50,0.211325,0.05)$.
		(a) Solution of non-homogeneous steady states at $\delta=8.72901$, $\kappa=1$. (b) Solution 
		of non-homogeneous steady states at $\delta =-92.2877 $, $\kappa=5$. (c) Solution of 
		non-homogeneous steady states at $\delta=-220.578$, $\kappa=10$.}
\end{figure} 

In summary, we conclude that $\kappa$ and $l$ do not seem to be the parameters of primary
importance in our context as we can re-obtain similar solutions and similar bifurcation structures 
for different values of $\kappa$ and $l$ upon varying $\delta,\alpha$ as primary parameters.

\section{Outlook}\label{sec.outlook}

So far, relatively little attention has been devoted to the study of the parameter space interfaces 
of different mathematical methods. In this contribution, we have analysed as an example a cross-diffusion
herding model to understand where, and how, the global nonlinear analysis approach via entropy variables 
is connected to bifurcation analysis techniques from dynamical systems. We have shown that both approaches
encounter similar problems regarding the degeneracy of the diffusion matrix and we were able to cover 
different parameter regimes by combining the results of the two methods.

This paper is only a first starting point. Here we shall just mention a few ideas for future work. 

The next step is to analyse the regime $\alpha\ra 0$ and to check whether the limitation 
in \eqref{1.mu} on $\alpha$ can be improved, or not. In this regard, one also has to consider in
which sense the forward problem should be interpreted for moderate and small values of $\alpha$ and for 
$\delta<\delta_{\textnormal{d}}$. Recent work~\cite{LionsWien} suggests that one 
should not only use the notion of Petrovskii ellipticity for the stationary problem~\cite{ShiWang} 
but also consider it in the parabolic context; see the classical survey~\cite{AgranovichVishik}.

The next step is to expand the approach to other examples. In particular, many reaction-diffusion 
systems as well as other classes of PDEs have natural entropies, which can be used to study global 
existence and convergence properties. In the nonlinear case, one frequently can also employ approaches 
from dynamical systems to understand the dynamics of the PDE. Using a similar approach as 
we presented here could be illuminating for other examples. For example, it is natural to conjecture 
that there are examples in applications, which exhibit the following characteristics:

\begin{itemize}
 \item[(Z1)] There exists one fixed parameter region in which the entropy method yields global 
 decay. Upon variation of a single parameter, the validity boundary of the entropy method coincides 
 precisely with an isolated local supercritical bifurcation point.
 \item[(Z2)] There exists one fixed parameter region in which the entropy method yields global 
 decay. Upon variation of a single parameter, the validity boundary of the entropy method does 
 not coincide with a local bifurcation point. Instead, the obstruction is a global bifurcation 
 branch in parameter space with a fold point precisely at the validity boundary.
\end{itemize}

In this work, we apparently found a more complicated case as shown in Figure~\ref{fig:00}. 
However, it seems plausible that the cases (Z1)-(Z2) should occur even in classical problems without 
cross-diffusion, i.e.\ reaction-diffusion equations with a diagonal positive-definite diffusion matrix. 
Determining whether this is true for several classical examples 
from applications is an interesting open problem.

Regarding the entropy method \cite{Carrilloetal,DesvillettesFellner1}, it would be interesting to 
investigate in more detail parametric scenarios for its validity regime. For example, the question 
arises whether it is possible to find criteria for the validity range that are computable for 
entire classes of PDEs. The entropy approach relies on upper bounds. Although the bounds we 
present here turn out to be sharp in the sense of global decay dynamics in a suitable singular limit,
this may not always be easy to achieve as demonstrated by the $\alpha\ra 0$ case discussed above.  
It would be relevant to estimate a priori, which regime in 
parameter space one fails to cover if certain non-optimal upper bounds are used. As above, carrying 
this out for several examples could already be very illuminating.

Regarding the analytical and numerical bifurcation analysis, there are multiple strategies to
deal with the problem of mass conservation, or more generally with higher-dimensional solution
manifolds. For example, one may try to compute the entire solution family of steady states 
parametrized by the mass numerically \cite{Henderson1,DankowiczSchilder}, which yields a 
numerical continuation problem for higher-dimensional manifolds and not only curves. 
Furthermore, we have focused on the numerical problem in the one-dimensional setup and 
computing the two- and three-space dimension cases could be interesting 
\cite{KuehnEllipticCont,UeckerWetzelRademacher}. Regarding analytical generalizations, 
a possible direction is to view $\delta^*$ as a singular limit 
and phrase the problem as a perturbation problem \cite{Ni,Fife,AchleitnerKuehn}. 


\bibliographystyle{alpha}
\bibliography{./JKT}

\newpage 

\tableofcontents

\end{document}